\newcommand{\radphi}{b_{\phi}}
\newcommand{\loss}{\ell}
\newcommand{\poploss}{L}
\newcommand{\scorefunc}{s}
\newcommand{\scoreval}{\scorefunc}
\newcommand{\scorerv}{S}
\newcommand{\quant}{\mathsf{Quant}}
\begin{document}

\title{A Few Observations on Sample-Conditional Coverage in
  \\ Conformal Prediction}
\author{John Duchi\thanks{Supported by the Office of Naval
    Research, grant N00014-22-1-2669, and Samsung grant SPO-313746.} \\
Stanford University}

\maketitle

\begin{abstract}
  We revisit the problem of constructing predictive confidence sets
  for which we wish to obtain some type of conditional
  validity.
  We provide new arguments showing how ``split conformal'' methods
  achieve near desired coverage levels with high probability, a
  guarantee conditional on the validation data rather than
  marginal over it.
  In addition, we directly consider (approximate) conditional coverage,
  where, e.g., conditional on a covariate $X$ belonging to some
  group of interest, we would like a guarantee that a predictive set
  covers the true outcome $Y$.
  We show that the natural method of performing quantile regression on a
  held-out (validation) dataset yields minimax optimal guarantees of
  coverage here.
  Complementing these positive results, we also provide experimental
  evidence that interesting work remains to be done to develop
  computationally efficient but valid predictive inference methods.
\end{abstract}

\section{Introduction and background}

In conformal
prediction~\cite{VovkGaSh05,LeiWa14,LeiGSRiTiWa18,BarberCaRaTi21a}, we wish
to perform predictive inference on the outcome $Y$ coming from pairs $(X, Y)
\in \mc{X} \times \mc{Y}$.
The basic approach yields confidence sets $C(x) \subset \mc{Y}$, where given
a sample $(X_i, Y_i)_{i = 1}^n$, an estimated confidence set
$\what{C}$ provides the (marginal) coverage
\begin{equation}
  \label{eqn:marginal-guarantee}
  \P\left(Y_{n + 1} \in \what{C}(X_{n + 1})\right) \ge 1 - \alpha.
\end{equation}
Typically, to do this, we assume the existence of a scoring function
$\scorefunc : \mc{X} \times \mc{Y} \to \R$ and define confidence sets
of the form $C_\tau(x) \defeq \{y \mid \scorefunc(x, y) \le \tau\}$.
For example, when predicting $Y \in \R$ in regression, given a
predictor $f : \mc{X} \to \R$ the absolute error $\scorefunc(x, y) =
|f(x) - y|$ yields the familiar confidence set $C_\tau(x) = \{y \in \R \mid
|y - f(x)| \le \tau\} = [f(x) - \tau, f(x) + \tau]$ of values $y$ near
$f(x)$.

The classical (split-conformal) approach~\cite{VovkGaSh05,BarberCaRaTi21a}
uses the sample to find the threshold $\what{\tau}$ larger than the
observed scores on about $1 - \alpha$ fraction of the data, then notes that
$\scoreval(X_{n + 1}, Y_{n + 1})$ is likely to be smaller than this
threshold.
More formally, if $(X_i, Y_i)$ are exchangeable and we let $\scorerv_i =
\scoreval(X_i, Y_i)$, then for the order statistics $\scorerv_{(1)} \le
\scorerv_{(2)} \le \cdots \le \scorerv_{(n + 1)}$, we have
\begin{equation*}
  \P\left(\scorerv_{n + 1} > \scorerv_{(\ceil{(1 - \alpha) (n + 1)})}\right)
  \le \alpha,
\end{equation*}
because the probability that $S_{n + 1}$ is in the $\alpha$-largest fraction
of the observed scores is at most $\alpha$.
Then a bit of
bookkeeping~\cite[e.g.][Lemma 2]{RomanoPaCa19} shows that the
slightly enlarged empirical quantile
\begin{equation*}
  \what{\tau} \defeq \quant_{(1 - \alpha)(1 + 1/n)}(\scorerv_1, \ldots,
  \scorerv_n),
\end{equation*}
provides the guarantee
\begin{equation*}
  \P\left(\scorerv_{n + 1} > \what{\tau}\right) \le \alpha.
\end{equation*}
Written differently, the confidence set
\begin{equation*}
  \what{C}(x) \defeq \left\{y \in \mc{Y}
  \mid \scorefunc(x, y) \le \what{\tau}\right\}
\end{equation*}
satisfies
\begin{equation*}
  \P(Y_{n + 1} \in \what{C}(X_{n + 1}))
  = \P\left(\scorefunc(X_{n + 1}, Y_{n + 1}) \le \what{\tau}\right)
  = \P\left(\scorerv_{n + 1} \le \what{\tau}\right) \ge 1 - \alpha.
\end{equation*}

\subsection{On $X$-conditional coverage}
\label{sec:intro-x-conditional}

We might ask for more than the marginal
guarantee~\eqref{eqn:marginal-guarantee} in a few ways.
The first is to target $X$-\emph{conditional coverage}, where
given a new datapoint $X_{n+1}$, we wish to achieve
\begin{equation*}
  \P(Y_{n + 1} \in \what{C}(X_{n + 1}) \mid X_{n + 1}) \ge 1 - \alpha.
\end{equation*}
But there are fundamental challenges here.
Let us say a set valued mapping $\what{C}_n : \mc{X} \toto \mc{Y}$
achieves distribution-free conditional $(1 - \alpha)$ coverage if
for any $P$, when $(X_i, Y_i) \simiid P$ and $\what{C}_n$ is a function
of $(X_i, Y_i)_{i = 1}^n$, then
for $P$-almost-all $x$,
\begin{equation}
  \label{eqn:x-conditional-coverage}
  \P(Y_{n + 1} \in \what{C}_n(X_{n + 1}) \mid X_{n + 1} = x) \ge 1 - \alpha.
\end{equation}

Unfortunately, such a distribution-free guarantee is impossible.
Focusing on the case that $\mc{Y} = \R$ for simplicity, so that confidence
sets $\what{C}(x) \subset \R$, \citet[Proposition 4]{Vovk12} shows that
the Lebesgue measure $\mathsf{Leb}(\what{C}(x))$ is almost always infinite
(see also \citet{BarberCaRaTi21a}):
\begin{corollary}
  Let $\mc{X}$ be a metric space and assume that $X \in \mc{X}$ has
  continuous distribution.
  If $\what{C}$ provides distribution free $(1 - \alpha)$ conditional
  coverage, then for $P$-almost all
  $x \in \mc{X}$,
  \begin{equation*}
    \P(\mathsf{Leb}(\what{C}(x)) = + \infty) \ge 1 - \alpha.
  \end{equation*}
\end{corollary}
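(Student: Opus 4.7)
The plan is a contradiction argument via a perturbation of $P$. Fix any $x_0 \in \mc{X}$ with $P(X = x_0) = 0$, which by continuity of the distribution of $X$ holds for $P$-almost all $x_0$, and suppose for contradiction that $q \defeq \P(\mathsf{Leb}(\what{C}_n(x_0)) < \infty) > \alpha$. By monotone continuity of probability, I can then choose a finite $V$ with $q' \defeq \P(\mathsf{Leb}(\what{C}_n(x_0)) \le V) > \alpha$.

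Next, I construct a perturbation. Let $Q_M$ be the distribution with $X = x_0$ almost surely and $Y \sim \mathsf{Uniform}[-M,M]$, and set $P_{\epsilon,M} \defeq (1 - \epsilon) P + \epsilon Q_M$ for small $\epsilon > 0$ and large $M > 0$. Since $\what{C}_n$ provides distribution-free conditional coverage, applying~\eqref{eqn:x-conditional-coverage} to $P_{\epsilon,M}$ at $x = x_0$ gives
\begin{equation*}
  \P_{P_{\epsilon,M}}\bigl(Y_{n+1} \in \what{C}_n(x_0) \mid X_{n+1} = x_0\bigr) \ge 1 - \alpha.
\end{equation*}
I then condition on the event $E$ that no training point satisfies $X_i = x_0$, which has probability $(1 - \epsilon)^n$ and is independent of $(X_{n+1}, Y_{n+1})$. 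On $E$ the training data are i.i.d.\ from $P$, so $\what{C}_n$ has the same law as under $P$; on $E^c$ I use the trivial upper bound $1$. Because $P(X = x_0) = 0$, the conditional distribution of $Y_{n+1}$ given $X_{n+1} = x_0$ under $P_{\epsilon,M}$ is exactly $\mathsf{Uniform}[-M,M]$, so the coverage probability on $E$ equals $\mathbb{E}_P[\mathsf{Leb}(\what{C}_n(x_0) \cap [-M,M])/(2M)]$, which is at most $(1 - q') + V/(2M)$ by splitting on whether $\mathsf{Leb}(\what{C}_n(x_0)) \le V$. Combining,
\begin{equation*}
  1 - \alpha \le (1-\epsilon)^n \bigl[(1 - q') + V/(2M)\bigr] + \bigl[1 - (1-\epsilon)^n\bigr].
\end{equation*}
Sending $M \to \infty$ and then $\epsilon \to 0$ yields $q' \le \alpha$, contradicting $q' > \alpha$.

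The main technical obstacle is justifying the disintegration: that $P(X = x_0) = 0$ really does force $Y_{n+1} \mid X_{n+1} = x_0$ to have the $Q_M$ conditional under $P_{\epsilon,M}$, and that $\what{C}_n(x_0)$ is a well-defined measurable random subset of $\R$ (a standing measurability assumption on the procedure, needed to make both coverage probabilities and $\mathsf{Leb}(\what{C}_n(x_0))$ meaningful). Once these are in place, the argument reduces to the elementary limit computation above, and the choice of $Y \mid X = x_0$ as uniform on an arbitrarily large interval is the device that converts ``finite Lebesgue measure of $\what{C}$'' into ``vanishing coverage'' and hence a contradiction with distribution-free $(1-\alpha)$ conditional coverage.
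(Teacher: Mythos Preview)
The paper does not supply its own proof of this corollary; it simply attributes the result to \citet[Proposition~4]{Vovk12} (with a pointer to \citet{BarberCaRaTi21a}) and moves on. Your argument is correct and is essentially the standard perturbation proof that appears in those references: mix $P$ with a point mass at $x_0$ carrying a uniform $Y$-marginal on $[-M,M]$, use the resulting atom to force the conditional coverage inequality to hold specifically at $x_0$, condition on the event that no training point lands at $x_0$ (so the training data revert to $P$), and then let $M\to\infty$ and $\epsilon\to 0$ to convert ``finite Lebesgue measure'' into ``coverage $\le 1-q'<1-\alpha$.'' The disintegration step you flag is indeed the only place requiring care, and your justification is right: because $P(X=x_0)=0$ while $Q_M(X=x_0)=1$, the conditional law of $Y_{n+1}$ given $X_{n+1}=x_0$ under $P_{\epsilon,M}$ is exactly $\mathsf{Uniform}[-M,M]$, and conditioning the training sample on $E$ returns i.i.d.\ draws from $P$.

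One cosmetic remark: since $X$ is assumed to have a continuous (non-atomic) distribution, $P(X=x_0)=0$ holds for \emph{every} $x_0\in\mc{X}$, not merely $P$-almost every one, so your argument in fact gives the conclusion at all points.
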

\noindent
Similar results apply when the marginals over $X$ are not
too discrete~\cite[Corollary 7.1]{DuchiGuJiSu24}.

These failures motivate relaxing the conditional coverage
condition~\eqref{eqn:x-conditional-coverage}.
Perhaps the simplest approach considers
\emph{group-conditional coverage}, where for groups
$G \subset \mc{X}$, one targets the guarantee
\begin{align}
  \label{eqn:group-conditional}
  \P(Y_{n + 1} \in \what{C}(X_{n + 1}) \mid X_{n + 1} \in G) \ge 1 - \alpha.
\end{align}
\citet[Sec.~4]{BarberCaRaTi21a} show how to achieve the
coverage~\eqref{eqn:group-conditional} by considering worst-case coverage
over all groups $G$ of interest; \citet{JungNoRaRo23} consider
variations on this guarantee. 

\citet*{GibbsChCa23} develop a compelling
relaxation extending this idea.
To begin, note that conditional coverage $\P(Y \in \what{C}(x) \mid X = x) =
1 - \alpha$ is equivalent to the condition
\begin{equation}
  \label{eqn:weighted-coverage}
  \E\left[w(X) \left(\indic{Y \in \what{C}(X)} - (1 - \alpha)\right)\right]
  = 0
\end{equation}
for all bounded (measurable) functions $w : \mc{X} \to \R$. Indeed, the
tower property of conditional expectations shows that
equality~\eqref{eqn:weighted-coverage} holds if and only if
\begin{equation*}
  \E\left[w(X) \left(\P(Y \in \what{C}(X) \mid X) - (1 - \alpha)\right)
    \right] = 0,
  ~~ \mbox{i.e.} ~~
  \E\left[\left|\P(Y \in \what{C}(X) \mid X) - (1 - \alpha)\right|
    \right] = 0,
\end{equation*}
where we take $w(x) = \sign(\P(Y \in \what{C}(X) \mid X = x) - (1 -
\alpha))$, so $\P(Y \in \what{C}(X) \mid X) = 1 - \alpha$ with
probability $1$.
Similarly, we obtain the one-sided
inequality~\eqref{eqn:x-conditional-coverage} if and only if
\begin{equation*}
  \E\left[w(X) \indic{Y \in \what{C}(X)}\right] \ge (1 - \alpha)
  \E[w(X)]
\end{equation*}
for all nonnegative bounded $w$.
Taking $w(x) = \indic{x \in G}$ for groups $G \subset \mc{X}$
shows that this guarantee implies the group-conditional
coverage~\eqref{eqn:group-conditional};
relaxing the
condition~\eqref{eqn:weighted-coverage} by considering subclasses of
weighting functions $\mc{W} \subset \{\mc{X} \to \R\}$
leads to the following definition~\cite{GibbsChCa23}:
\begin{definition}
  \label{definition:weighted-coverage}
  A confidence set $C : \mc{X} \toto \mc{Y}$ achieves
  \emph{$\mc{W}$-weighted $((1 - \alpha), \epsilon)$ coverage} if
  \begin{equation*}
    \big|
    \E\left[w(X) \left(\indic{Y \in C(X)} - (1 - \alpha)\right)\right]
    \big| \le \epsilon
  \end{equation*}
  for all $w \in \mc{W}$.
\end{definition}
\noindent
In Definition~\ref{definition:weighted-coverage}, we take the
confidence set mapping $C$ to be fixed; \citeauthor{GibbsChCa23}
allow $\what{C}$ to be random, in which case (in their definition)
the expectation is taken also over the $\what{C}$ itself.
Because we study coverage conditional on a sample
used to construct $\what{C}$, we maintain
Definition~\ref{definition:weighted-coverage}.

\citeauthor{GibbsChCa23}'s main two examples are cases in which $\mc{W}$
corresponds to a vector space of the form $\mc{W} = \{w \mid w(x) = \<v,
\phi(x)\>\}$ for some feature mapping $\phi : \mc{X} \to \R^d$, and when
$\mc{W}$ corresponds to a reproducing kernel Hilbert space.
At prediction time---on a new example $X_{n + 1}$---they perform
\emph{full conformal inference}~\cite{VovkGaSh05},
where implicitly for each $t \in \R$, they solve
\begin{equation*}
  \what{h}_{n+1,t}
  = \argmin_{h \in \mc{W}}
  \sum_{i = 1}^n \loss_\alpha(h(X_i) - \scorerv_i)
  + \loss_\alpha(h(X_{n+1}) - t)
\end{equation*}
for the quantile loss $\loss_\alpha(t) = \alpha \hinge{t}
+ (1 - \alpha) \hinge{-t}$,
then define the implicit confidence set
\begin{equation}
  \what{C}_{n}(X_{n+1}) \defeq \left\{y \in \mc{Y}
  \mid \scoreval(X_{n + 1}, y) \le \what{h}_{n + 1, \scoreval(X_{n + 1},
    y)}(X_{n + 1}) \right\}.
  \label{eqn:implicit-full-confidence-set}
\end{equation}
A careful duality calculation~\cite[Sec.~4]{GibbsChCa23}
shows how to compute $\what{C}_n$ by solving a linear program
over $O(n + d)$ variables using $(X_i)_{i = 1}^{n + 1}$
and $\scorerv_1^n$,
and \citeauthor{GibbsChCa23}
show the set~\eqref{eqn:implicit-full-confidence-set}
satisfies
\begin{equation*}
  \left|\E\left[w(X_{n+1})\indic{
      Y_{n + 1} \not \in \what{C}_n(X_{n+1})}
    - w(X_{n + 1}) (1 - \alpha)\right]\right|
  \le \epsilon_{\textup{int}}(w)
\end{equation*}
for all functions $w \in \mc{W}$, where $\epsilon_{\textup{int}}(w)$ is an
interpolation error term (which they control and is of typically
small order in $n$).
Defining $\P_w(A) = \E_P[w(X)
  \indic{A}] / \E_P[w(X)]$ to be the $w$-weighted probability of an event
$A$ for $w \ge 0$, this inequality demonstrates that
\begin{equation}
  \label{eqn:cherian-gibbs-guarantee}
  \P_w(Y_{n + 1} \in \what{C}(X_{n + 1})) \ge 1 - \alpha
  - \epsilon_{\textup{int}}(w)
\end{equation}
for all $w \ge 0, w \in \mc{W}$,
a more nuanced guarantee than the marginal
coverage~\eqref{eqn:marginal-guarantee}.

One challenge with this full conformal approach is that computing
the prediction set $\what{C}_n$ requires solving a sometimes costly
optimization.
This suggests split-conformal approaches that provide adaptive
confidence sets of the form
\begin{equation*}
  \what{C}_n(x) \defeq \left\{y \in \mc{Y} \mid \scorefunc(x, y)
  \le \what{h}_n(x)\right\},
\end{equation*}
where $\what{h}_n$ is chosen based only on the sample $(X_i, Y_i)_{i =
  1}^n$---hence the name ``split conformal''---making the set $\what{C}_n$
easy to compute~\cite{RomanoPaCa19, CauchoisGuDu21}.
In spite of their ease of computation, it has been challenging to
demonstrate that these sets can achieve coverage;
for example, \citet{RomanoPaCa19} and \citet{CauchoisGuDu21}
apply another level of conformalization to
fit a constant threshold $\what{\tau}_n$ and use
$\what{C}_n(x) = \{y \in \mc{Y} \mid \scorefunc(x, y) \le
\what{h}_n(x) + \what{\tau}_n\}$.
%
%
We revisit these types of sets and show a few new convergence
and coverage guarantees for them, including new optimality guarantees.

\subsection{Sample-conditional coverage}
\label{sec:intro-sample-conditional}

Inequalities~\eqref{eqn:marginal-guarantee}
and~\eqref{eqn:cherian-gibbs-guarantee} provide
guarantees marginal over the entire sampling
procedure, drawing $(X_i, Y_i) \simiid P$ for $i = 1, 2, \ldots, n + 1$.
While conditional coverage (on $X_{n + 1}$) is
impossible, it is possible to achieve
\emph{sample-conditional}-coverage.
For this, let $P_n$ denote the empirical distribution of $(X_i, Y_i)_{i =
  1}^n$ (or simply the sample itself).
Then we seek a guarantee of the form
\begin{equation*}
  \P\left(Y_{n + 1} \in \what{C}_n(X_{n + 1}) \mid P_n\right)
  \ge 1 - \alpha - o(1)
\end{equation*}
with high probability over the sampling generating $\what{C}_n$, which is of
course a function of $P_n$.
Because of their reliance on individual examples, full-conformal procedures
cannot achieve such conditional guarantees~\cite{BianBa22}, though the
split-conformal procedures we review can straightforwardly guarantee them.
Our main purpose is to provide new arguments for this and
to extend the arguments to show how split-conformal methods can achieve
approximate weighted-conditional coverage
(Definition~\ref{definition:weighted-coverage}) with high probability
and optimal error $\epsilon$.
For example, recalling the weighted
probability~\eqref{eqn:cherian-gibbs-guarantee}, we will show that with high
probability over the sample $P_n$,
\begin{equation*}
  \P_w(Y_{n + 1} \in \what{C}_n(X_{n + 1}) \mid P_n) \ge
  1 - \alpha - O(1) \sqrt{\frac{\alpha(1 - \alpha)}{\E_P[w(X)]}
    \cdot \frac{d \log n}{n}}
\end{equation*}
simultaneously for all $w \ge 0$ in $d$-dimensional classes of functions
$\mc{W}$.

We recapitulate the basic arguments to achieve sample-conditional coverage
with naive split-conformal confidence sets $\what{C}_n(x) =
\{y \mid \scorefunc(x, y) \le \what{\tau}_n\}$ for a fixed threshold
$\what{\tau}_n$.
%
%
\citet[Section 3]{Vovk12} shows that the
event $\{Y_{n + 1} \not \in \what{C}_n(X_{n + 1})\}
= \{\scorefunc(X_{n + 1}, Y_{n + 1}) > \what{\tau}_n\}$
has small probability via an
argument working with individual failures $\scoreval(X_i, Y_i) > t$
for a particular threshold $t$.
We provide two related arguments to obtain sample-conditional coverage here.
The first uses elementary calculations and recapitulates Vovk's
argument for completeness but using our notation; the second uses a
concentration and VC-dimension calculation to preview our coming approaches.

To state things formally, let $\scorerv_i = \scoreval(X_i, Y_i)$
$i = 1, \ldots, n$, where $(X_i, Y_i) \simiid P$ for some
fixed distribution $P$. Let $\alpha \in (0, 1)$ be a desired
confidence level, and define the
empirical $(1 - \alpha)$ quantile
\begin{equation*}
  \what{\tau}_n \defeq \inf\left\{t \in \R \mid
  P_n(\scorerv \le t) \ge 1 - \alpha \right\},
\end{equation*}
where we recall that $P_n$ denotes the empirical distribution.
Then given this quantile, we define the confidence set
\begin{equation*}
  \what{C}_n(x) \defeq \left\{y \in \mc{Y} \mid \scorefunc(x, y)
  \le \what{\tau}_n \right\}.
\end{equation*}

\begin{proposition}[Vovk~\cite{Vovk12}, Proposition~2]
  \label{proposition:sample-conditional-basic}
  Let the construction above hold. Then for any $\gamma > 0$, with
  probability at least $1 - e^{-2n\gamma^2}$ over the sample $P_n$,
  \begin{equation}
    \P(Y_{n + 1} \in \what{C}_n(X_{n + 1}) \mid P_n) \ge 1 - \alpha - \gamma.
    \label{eqn:sample-conditional-basic}
  \end{equation}
\end{proposition}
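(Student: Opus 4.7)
The plan is to observe that conditional on the sample $P_n$, the coverage probability collapses to a one-dimensional quantity, namely $F(\what{\tau}_n)$, where $F$ is the CDF of the score $\scorerv = \scoreval(X, Y)$ under $P$. Indeed, since $(X_{n+1}, Y_{n+1})$ is independent of $P_n$ and $\what{\tau}_n$ is $P_n$-measurable,
\begin{equation*}
  \P(Y_{n+1} \in \what{C}_n(X_{n+1}) \mid P_n)
  = \P(\scorerv_{n+1} \le \what{\tau}_n \mid P_n)
  = F(\what{\tau}_n).
\end{equation*}
So the claim~\eqref{eqn:sample-conditional-basic} reduces to showing that $F(\what{\tau}_n) \ge 1 - \alpha - \gamma$ with probability at least $1 - e^{-2n\gamma^2}$.

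Next, I would invoke the one-sided Dvoretzky--Kiefer--Wolfowitz inequality (with Massart's sharp constant) applied to the i.i.d.\ scalar scores $\scorerv_1, \ldots, \scorerv_n$: if $F_n$ denotes their empirical CDF, then
\begin{equation*}
  \P\left(\sup_{t \in \R} (F_n(t) - F(t)) \ge \gamma\right)
  \le e^{-2n\gamma^2}.
\end{equation*}
On the complementary event, $F(t) \ge F_n(t) - \gamma$ uniformly in $t$, which I will instantiate at $t = \what{\tau}_n$. By the definition of $\what{\tau}_n$ as the infimum of $t$ with $P_n(\scorerv \le t) = F_n(t) \ge 1 - \alpha$, right-continuity of $F_n$ yields $F_n(\what{\tau}_n) \ge 1 - \alpha$. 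Combining gives $F(\what{\tau}_n) \ge 1 - \alpha - \gamma$, as desired.

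There is no real obstacle here; the only subtlety is making sure to use the one-sided DKW bound (so that the failure probability is $e^{-2n\gamma^2}$ rather than $2 e^{-2n\gamma^2}$) and to verify the boundary inequality $F_n(\what{\tau}_n) \ge 1 - \alpha$ coming from right-continuity of empirical CDFs. The argument is pleasingly modular: the sample-conditional nature of the guarantee is captured entirely by the reduction to $F(\what{\tau}_n)$, after which the problem is purely a uniform concentration statement about a univariate empirical CDF, foreshadowing the VC/uniform-convergence perspective the paper will extend to the weighted-coverage setting.
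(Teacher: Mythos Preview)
Your proof is correct and is essentially the paper's second argument (the uniform-convergence proof in Section~\ref{sec:baby-uniform-convergence}), sharpened by invoking the one-sided DKW inequality rather than the two-sided version so as to recover the exact constant $e^{-2n\gamma^2}$ instead of $2e^{-2n\gamma^2}$.

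The paper's \emph{primary} (``elementary'') proof, however, takes a slightly different route that avoids any uniform bound: it fixes in advance the deterministic threshold $q^\star(1-\alpha-\gamma)$, observes that $\what{\tau}_n < q^\star(1-\alpha-\gamma)$ forces $\frac{1}{n}\sum_i \indic{\scorerv_i < q^\star(1-\alpha-\gamma)} \ge 1-\alpha$, and applies Hoeffding's inequality to this single Bernoulli average. That argument needs only Hoeffding (no DKW/Massart), and its pointwise structure is what later allows the Bernstein refinement in Proposition~\ref{proposition:distinct-scores}, where the variance $\alpha(1-\alpha)$ enters explicitly. Your DKW-based reduction to $F(\what{\tau}_n)$, on the other hand, is exactly the template the paper generalizes to weighted coverage via VC-type uniform bounds---so you have correctly anticipated the route the paper builds on, while the paper's elementary proof buys a cleaner path to the variance-aware extensions.
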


\citet{JungNoRaRo23} consider this sample-conditional coverage with the
additional desideratum of
group-conditional-coverage~\eqref{eqn:group-conditional}, showing that under
some assumptions on the smoothness of the underlying distribution of
$\scoreval(x, Y)$ given $X = x$, it is approximately achievable.
We revisit their approach in Section~\ref{sec:approaching-conditional},
providing new and sharper guarantees on its behavior, without
any assumptions on the underlying distribution, allowing
analogues of the guarantee~\eqref{eqn:sample-conditional-basic}
in approximate conditional senses.
\citet{BianBa22} also consider such sample-conditional coverage, showing
that it is impossible to achieve without stronger assumptions for
many predictive methods. 

\section{Sample conditional coverage revisited}

We begin by revisiting the sample-conditional coverage
guarantees of Proposition~\ref{proposition:sample-conditional-basic}.
It admits a quite elementary proof relying only on Hoeffding's
concentration inequality, making it a natural point of
departure for developing more sophisticated coverage guarantees.
We thus provide this elementary proof, then
demonstrate the result using uniform convergence techniques.
These uniform convergence guarantees---which form the basis
for providing guarantees for approximate weighted coverage
(Definition~\ref{definition:weighted-coverage}) also
provide a two-sided bound on sample-conditional coverage:
\begin{corollary}
  \label{corollary:distinct-scores}
  Assume the scores $\scorerv_i = \scoreval(X_i, Y_i)$ are distinct with
  probability 1.
  Then for any $\gamma > 0$, with probability at least
  $1 - 2 e^{-2n \gamma^2}$ over the sample $P_n$,
  \begin{equation*}
    1 - \alpha - \gamma
    \le \P(Y_{n + 1} \in \what{C}_n(X_{n + 1}) \mid P_n)
    \le 1 - \alpha + \frac{1}{n} + \gamma.
  \end{equation*}
\end{corollary}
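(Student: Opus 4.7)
The plan is to reduce the two-sided inequality to a Dvoretzky--Kiefer--Wolfowitz (DKW) deviation between the empirical CDF $F_n$ and the population CDF $F$ of the scores $\scorerv = \scoreval(X, Y)$. First, I would observe that since $(X_{n+1}, Y_{n+1})$ is independent of the sample $P_n$ and $\what{\tau}_n$ is measurable with respect to $P_n$, the conditional coverage equals
\[
  \P(Y_{n+1} \in \what{C}_n(X_{n+1}) \mid P_n)
  = \P(\scoreval(X_{n+1}, Y_{n+1}) \le \what{\tau}_n \mid P_n)
  = F(\what{\tau}_n).
\]
The claim therefore reduces to showing $F(\what{\tau}_n) \in [1 - \alpha - \gamma,\ 1 - \alpha + 1/n + \gamma]$ with probability at least $1 - 2 e^{-2n\gamma^2}$.

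Second, I would use the distinctness hypothesis to pin down $F_n(\what{\tau}_n)$ exactly. With probability one the scores $\scorerv_1, \ldots, \scorerv_n$ are all distinct, so the empirical CDF jumps in units of exactly $1/n$; consequently $\what{\tau}_n$ coincides with the order statistic $\scorerv_{(\ceil{(1 - \alpha) n})}$ and
\[
  F_n(\what{\tau}_n) = \frac{\ceil{(1 - \alpha) n}}{n}
  \in \left[1 - \alpha,\ 1 - \alpha + \tfrac{1}{n}\right].
\]

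Third, I would invoke the DKW--Massart inequality $\P(\sup_t |F_n(t) - F(t)| > \gamma) \le 2 e^{-2n\gamma^2}$. On the complementary event, $|F(\what{\tau}_n) - F_n(\what{\tau}_n)| \le \gamma$, and combining with the bracket from the previous step yields the stated two-sided inequality. There is no real obstacle: the only delicate point is the pointwise identification of $F_n(\what{\tau}_n)$ with $\ceil{(1 - \alpha) n}/n$, which is bookkeeping once distinctness rules out ties, and the $2 e^{-2n\gamma^2}$ factor (versus $e^{-2n\gamma^2}$ in Proposition~\ref{proposition:sample-conditional-basic}) is precisely the cost of passing from a one-sided quantile bound to the two-sided uniform bound on $F_n - F$.
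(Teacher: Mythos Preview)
Your proposal is correct and follows essentially the same route as the paper: the paper likewise sandwiches the conditional coverage via $P_n(\scorerv \le \what{\tau}_n) \in [1-\alpha,\,1-\alpha+1/n]$ (the distinctness giving the upper endpoint) and then applies the DKW--Massart inequality to bound $\sup_\tau |P(\scorerv \le \tau) - P_n(\scorerv \le \tau)|$ by $\gamma$ with probability at least $1 - 2e^{-2n\gamma^2}$. Your phrasing in terms of $F(\what{\tau}_n)$ and the exact identification $F_n(\what{\tau}_n) = \ceil{(1-\alpha)n}/n$ is marginally more explicit than the paper's, but the argument is the same.
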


The simplicity of the guarantee~\eqref{eqn:sample-conditional-basic}
means it admits elegant extensions as well.
For example, we can extend the argument to give a bound that
more carefully tracks the desired confidence $\alpha$:

\begin{proposition}
  \label{proposition:distinct-scores}
  Let $\delta \in (0, 1)$ and define
  \begin{equation*}
    \gamma_n(\delta) \defeq \frac{4 \log \frac{1}{\delta}}{3n}
    + \sqrt{\Big(\frac{4}{3 n} \log \frac{1}{\delta}\Big)^2 +
      \frac{2 \alpha(1 - \alpha)}{n}
      \log \frac{1}{\delta}}
    \le \frac{8 \log \frac{1}{\delta}}{3n}
    + \sqrt{\frac{2 \alpha(1 - \alpha)}{n} \log \frac{1}{\delta}}.
  \end{equation*}
  Then with probability at least $1 - \delta$
  over the draw of the sample $P_n$,
  \begin{equation*}
    1 - \alpha - \gamma_n(\delta)
    \le \P(Y_{n + 1} \in \what{C}_n(X_{n+1}) \mid P_n).
  \end{equation*}
  If additionally the scores $\scorerv$ have a density,
  then with probability at least $1 - 2 \delta$,
  \begin{equation*}
    1 - \alpha - \gamma_n(\delta)
    \le \P(Y_{n + 1} \in \what{C}_n(X_{n+1}) \mid P_n)
    \le 1 - \alpha + \gamma_n(\delta).
  \end{equation*}
\end{proposition}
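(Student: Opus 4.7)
Let $F$ denote the CDF of $\scorerv = \scoreval(X, Y)$ and $F_n$ its empirical counterpart. Since $(X_{n+1}, Y_{n+1})$ is independent of $P_n$, the conditional coverage equals $F(\what{\tau}_n)$, and I need to control its one- and two-sided deviations from $1-\alpha$. The plan is to convert each tail event $\{F(\what{\tau}_n) \not\in [1-\alpha-\gamma, 1-\alpha+\gamma]\}$ into a binomial tail event via the empirical CDF evaluated at an appropriate population pseudo-quantile, then apply Bernstein's inequality while tracking the true variance $p(1-p)$; this is what yields the $\alpha(1-\alpha)/n$ scaling rather than the cruder $1/n$ Hoeffding bound used in Proposition~\ref{proposition:sample-conditional-basic}.

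For the one-sided lower bound, take $t^\star = \inf\{t : F(t) \ge 1-\alpha-\gamma\}$, so $p \defeq F(t^\star-) \le 1-\alpha-\gamma$. If $F(\what{\tau}_n) < 1-\alpha-\gamma$, monotonicity forces $\what{\tau}_n < t^\star$, and combined with $F_n(\what{\tau}_n) \ge 1-\alpha$ (the defining property of the empirical quantile together with right-continuity of $F_n$) this yields $F_n(t^\star-) \ge 1-\alpha$. Writing $\beta = 1-\alpha-p \ge \gamma$, Bernstein's inequality applied to the i.i.d.\ Bernoulli($p$) variables $\indic{\scorerv_i < t^\star}$ bounds this event's probability by $\exp(-n\beta^2/[2p(1-p) + 2\beta/3])$. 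The key algebraic step is
\begin{equation*}
p(1-p) = \alpha(1-\alpha) + \beta(1-2\alpha) - \beta^2 \le \alpha(1-\alpha) + \beta,
\end{equation*}
valid for $\alpha, \beta \in [0,1]$, so that the denominator is at most $2\alpha(1-\alpha) + 8\beta/3$. Since $\beta \mapsto \beta^2 / (c_0 + c_1 \beta)$ is increasing in $\beta > 0$, the bound becomes $\exp(-n\gamma^2/[2\alpha(1-\alpha) + 8\gamma/3])$. Setting this equal to $\delta$ gives a quadratic in $\gamma$ whose positive root is exactly the stated $\gamma_n(\delta)$; the simpler final upper bound on $\gamma_n(\delta)$ then follows from $\sqrt{a+b} \le \sqrt{a}+\sqrt{b}$.

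For the two-sided statement, the density assumption makes $F$ continuous, so one can pick $t^\star$ with $F(t^\star) = 1-\alpha+\gamma$ exactly. The event $\{F(\what{\tau}_n) > 1-\alpha+\gamma\}$ then forces $\what{\tau}_n > t^\star$, hence $F_n(t^\star) < 1-\alpha$, and Bernstein applied in the lower-deviation direction to the Bernoulli($1-\alpha+\gamma$) indicators yields the same bound after noting $(1-\alpha+\gamma)(\alpha-\gamma) \le \alpha(1-\alpha) + \gamma$. A union bound then delivers probability $1-2\delta$. The main subtleties are essentially bookkeeping: (i) working with $F(t^\star-)$ rather than $F(t^\star)$ in the lower tail so that atoms of $F$ are harmless and no continuity is assumed in the first half of the proposition, and (ii) tracking the coefficient $8/3$ in the Bernstein denominator, which arises as the $2/3$ from Bernstein's third-moment term plus the $+2\beta$ absorbed from the bound $p(1-p) \le \alpha(1-\alpha) + \beta$. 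These choices determine the precise form of $\gamma_n(\delta)$.
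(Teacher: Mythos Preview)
Your proposal is correct and follows essentially the same route as the paper's proof: reduce the miscoverage event to a binomial tail at the population quantile $q^\star(1-\alpha\pm\gamma)$, apply Bernstein's inequality, bound the Bernoulli variance by $\alpha(1-\alpha)$ plus a linear term to obtain the $8/3$ coefficient, use monotonicity in the deviation to replace $\beta$ by $\gamma$, and solve the resulting quadratic. The only cosmetic difference is that you phrase everything via $F(\what\tau_n)$ and $F(t^\star-)$ while the paper works directly with $\what\tau_n$ versus $q^\star$, but the underlying binomial events and algebra are identical.
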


Roughly, we see that the simple quantile-based confidence set achieves
coverage
\begin{equation*}
  1 - \alpha \pm O_P(1) \sqrt{\frac{\alpha(1 - \alpha)}{n}}.
\end{equation*}
When $\alpha$ is small---which is the typical case---this is
always sharper than the naive guarantee~\eqref{eqn:sample-conditional-basic}.
The central limit theorem shows this is as accurately as we could hope to
even estimate the coverage level of a predictor; moreover, as we discuss
following Theorem~\ref{theorem:two-sided-sharp}, it is minimax
(rate) optimal.
In the remainder of the section, we provide two proofs
of Proposition~\ref{proposition:sample-conditional-basic},
along with Corollary~\ref{corollary:distinct-scores}.
In Section~\ref{sec:proof-distinct-scores}, we prove
Proposition~\ref{proposition:distinct-scores} using
Bernstein-type concentration guarantees.

\subsection{An elementary proof of
  Proposition~\ref{proposition:sample-conditional-basic}}

For the scalar random variable $\scorerv$, define the $\beta$-quantile
\begin{equation}
  \label{eqn:actual-quantile-mapping}
  q\opt(\beta) \defeq \inf \left\{t \in \R
  \mid \P(\scorerv \le t) \ge \beta \right\}.
\end{equation}
Because the CDF is right continuous, we have $\P(S \le q\opt(\beta)) \ge
\beta$, and $\P(S > q\opt(\beta)) = 1 - \P(S \le q\opt(\beta)) \le 1 -
\beta$.
For $\gamma > 0$ and any $\tau \in \R$, the inequality
\begin{equation*}
  \P(\scorerv_{n + 1} > \tau) > \alpha + \gamma,
  ~~ \mbox{i.e.} ~~
  \P(\scorerv_{n + 1} \le \tau) < 1 - \alpha - \gamma,
\end{equation*}
implies that $\tau < q\opt(1 - \alpha - \gamma)$.

Consider the event that $\what{\tau}_n < q\opt(1 - \alpha - \gamma)$.
For this to occur, it must be the case that
\begin{equation}
  \label{eqn:quantile-too-small}
  \frac{1}{n} \sum_{i = 1}^n \indic{\scorerv_i < q\opt(1 - \alpha - \gamma)}
  \ge 1 - \alpha.
\end{equation}
But this event is unlikely: define the Bernoulli indicator variables $B_i =
\indic{\scorerv_i < q\opt(1 - \alpha - \gamma)}$.
Then $\E[B_i] \le 1 -
\alpha - \gamma$, and Hoeffding's inequality implies
that $\wb{B}_n = \frac{1}{n} \sum_{i = 1}^n B_i$ satisfies
\begin{align*}
  \P\left(P_n(\scorerv < q\opt(1 - \alpha - \gamma)) \ge 1 - \alpha
  \right)
  & = \P\left(\wb{B}_n \ge 1 - \alpha\right) \\
  & \le \P\left(\wb{B}_n - \E[\wb{B}_n] \ge \gamma\right)
  \le \exp(-2 n \gamma^2).
\end{align*}
That is,
\begin{align*}
  \P\left(\what{\tau}_n < q\opt(1 - \alpha - \gamma)\right)
  \le \exp\left(-2n\gamma^2 \right)
\end{align*}
for any $\gamma > 0$, and so we must have
\begin{equation*}
  \P\left(\scorerv_{n + 1} > \what{\tau}_n \mid P_n \right) \le \alpha + \gamma
  ~~ \mbox{with~probability~at~least}~
  1 - e^{-2n\gamma^2}.
\end{equation*}
Rearranging and recalling that $Y_{n + 1} \not \in \what{C}_n(X_{n + 1})$ if
and only if $\scoreval(X_{n+1}, Y_{n + 1}) > \what{\tau}_n$, i.e., if
$\scorerv_{n + 1} > \what{\tau}_n$ gives the result.

\subsection{A proof of Proposition~\ref{proposition:sample-conditional-basic}
  using uniform convergence}
\label{sec:baby-uniform-convergence}

Our alternative approach to the proof of
Proposition~\ref{proposition:sample-conditional-basic} uses
the bounded differences inequality and a uniform concentration
guarantee.
First, for any estimated threshold $\what{\tau}_n$, we have
the trivial inequality
\begin{align*}
  \P(\scorerv_{n + 1} > \what{\tau}_n \mid P_n)
  & = \P(\scorerv_{n + 1} > \what{\tau}_n \mid P_n)
  - P_n(\scorerv > \what{\tau}_n)
  + P_n(\scorerv > \what{\tau}_n) \\
  & \le \sup_{\tau \in \R}
  \left|P(\scorerv > \tau)
  - P_n(\scorerv > \tau)\right|
  + P_n(\scorerv > \what{\tau}_n).
\end{align*}
Then because we choose $\what{\tau}_n$ so that
$P_n(\scorerv \le \what{\tau}_n) \ge 1 - \alpha$, we obtain
\begin{subequations}
  \label{eqn:one-VC-simple-deviation}
  \begin{equation}
    \P(\scorerv_{n + 1} > \what{\tau}_n \mid P_n)
    \le \alpha + \sup_{\tau \in \R}
    \left|P(\scorerv > \tau) - P_n(\scorerv > \tau)\right|.
  \end{equation}
  If the values $\scorerv_i$ are distinct, then $P_n(\scorerv \le
  \what{\tau}_n) \le 1 - \alpha + \frac{1}{n}$, and so a completely similar
  calculation yields
  \begin{equation}
    \P(\scorerv_{n + 1} > \what{\tau}_n \mid P_n)
    \ge \alpha - \frac{1}{n} - \sup_{\tau \in \R}
    \left|P(\scorerv > \tau) - P_n(\scorerv > \tau)\right|.
  \end{equation}
\end{subequations}
In either case, if we can control the deviation
$|P(\scorerv > \tau) - P_n(\scorerv > \tau)|$ uniformly across
$\tau$, we will have evidently proved the desired result.

We consider two arguments, the first yielding sharper constants,
while the second generalizes to weighted
coverage.
For the first, we apply the Dvoretsky-Kiefer-Wolfowitz
inequality~\cite{Massart90}:
\begin{equation*}
  \P\left(\sup_{\tau \in \R} |P(\scorerv > \tau) - P_n(\scorerv > \tau)|
  \ge t \right) \le 2 e^{-2n t^2}
\end{equation*}
for all $t \ge 0$.
Combining the equations~\eqref{eqn:one-VC-simple-deviation}, we
thus obtain that
\begin{align*}
  \P(\scorerv_{n + 1} > \what{\tau}_n \mid P_n)
  \le \alpha + \gamma
  ~~ \mbox{with probability at least}~
  1 - 2 e^{-2 n \gamma^2}.
\end{align*}
If the scores are distinct, the corresponding lower
bound is immediate, giving Corollary~\ref{corollary:distinct-scores}.

The final alternative argument to control the uniform deviations
in the bounds~\eqref{eqn:one-VC-simple-deviation} underpins
our more sophisticated guarantees in the sequel,
relying on uniform concentration guarantees and the
Vapnik-Chervonenkis (VC) dimension.
First, recall the classical bounded differences
inequality~\cite{McDiarmid89, Wainwright19}, where we say a function $f :
\mc{X}^n \to \R$ satisfies $c_i$-bounded differences if
\begin{equation*}
  |f(x_1^{i-1}, x_i, x_i, x_{i + 1}^n)
  - f(x_1^{i-1}, x_i', x_{i+1}^n)|
  \le c_i
  ~~ \mbox{for~all~} x_1^{i-1}, x_{i+1}^n,
  x_i, x_i' \in \mc{X}.
\end{equation*}
\begin{lemma}[Bounded differences]
  \label{lemma:bounded-differences}
  Let $X_1, \ldots, X_n$ be independent random variables
  and $f$ satisfy $c_i$-bounded differences. Then
  for all $t \ge 0$,
  \begin{equation*}
    \max\left\{
    \P(f(X_1^n) - \E[f(X_1^n)] \ge t),
    \P(f(X_1^n) - \E[f(X_1^n)] \le -t)
    \right\}
    \le \exp\left(-\frac{2 t^2}{\sum_{i = 1}^n c_i^2}\right).
  \end{equation*}
\end{lemma}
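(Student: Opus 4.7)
The plan is to prove the bounded differences inequality by the classical Doob martingale decomposition combined with Hoeffding's lemma and a Chernoff argument, which is the standard route to Azuma-Hoeffding in this setting.

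Introduce the filtration $\mc{F}_k = \sigma(X_1, \ldots, X_k)$ (with $\mc{F}_0$ trivial) and the Doob martingale $M_k = \E[f(X_1^n) \mid \mc{F}_k]$, so that $M_0 = \E[f(X_1^n)]$ and $M_n = f(X_1^n)$. The goal reduces to an exponential tail bound on $M_n - M_0 = \sum_{k=1}^n D_k$, where $D_k = M_k - M_{k-1}$. The central step is to show that each $D_k$ has conditionally bounded range: by independence of the $X_i$, one may write $M_k = g_k(X_1, \ldots, X_k)$ with $g_k(x_1^k) = \E[f(x_1^k, X_{k+1}, \ldots, X_n)]$, and applying the $c_k$-bounded differences property pointwise inside this expectation yields $|g_k(x_1^{k-1}, x) - g_k(x_1^{k-1}, x')| \le c_k$ for all $x, x'$. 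Hence, conditional on $\mc{F}_{k-1}$, the variable $D_k$ lies in an interval of length at most $c_k$.

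Given this conditional range bound and the martingale property $\E[D_k \mid \mc{F}_{k-1}] = 0$, I apply Hoeffding's lemma conditionally to get $\E[e^{\lambda D_k} \mid \mc{F}_{k-1}] \le \exp(\lambda^2 c_k^2 / 8)$ for every $\lambda \in \R$. Iterating the tower property across $k = 1, \ldots, n$ produces $\E[\exp(\lambda (M_n - M_0))] \le \exp(\lambda^2 \sum_{k=1}^n c_k^2 / 8)$, and Markov's inequality gives $\P(M_n - M_0 \ge t) \le \exp(\lambda^2 \sum c_k^2 / 8 - \lambda t)$; optimizing at $\lambda = 4t/\sum c_k^2$ yields the claimed bound $\exp(-2t^2 / \sum c_k^2)$. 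The symmetric lower-tail inequality follows by running the same argument with $-f$ in place of $f$, which satisfies identical bounded differences.

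The main technical obstacle is the conditional range control in the second step. The hypothesis is a pointwise statement on $f$, whereas the Doob differences $D_k$ are themselves expectations over the ``tail'' variables $X_{k+1}, \ldots, X_n$; pushing the bounded differences property through this expectation requires the independence of $X_k$ from $\mc{F}_{k-1}$, so that the conditional law of $D_k$ given $\mc{F}_{k-1}$ depends only on the $X_k$-marginal and the bound transfers cleanly. Once this step is in place, the remaining pieces---Hoeffding's lemma for bounded zero-mean variables and the Chernoff optimization---are routine.
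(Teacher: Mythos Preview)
Your proof is correct and follows the standard route (Doob martingale plus Hoeffding's lemma plus Chernoff), which is exactly McDiarmid's original argument. The paper does not prove this lemma at all: it simply states it as the classical bounded differences inequality and cites \cite{McDiarmid89, Wainwright19}, so there is no paper-side proof to compare against.
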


We then observe that $f(P_n) \defeq \sup_{\tau \in \R} |P(\scorerv > \tau) -
P_n(\scorerv > \tau)|$ trivially satisfies bounded differences.
Indeed, let $P_n'$ differ from $P_n$ in a single observation.
Then
defining $\linf{P - P_n} = \sup_\tau |P(\scorerv > \tau)
- P_n(\scorerv > \tau)|$ for shorthand, we obtain
\begin{align*}
  \left|\linf{P - P_n} - \linf{P - P_n'}\right|
  & \le
  \linf{P_n - P_n'}
  \le \frac{1}{n}
\end{align*}
by the triangle inequality and that only one example may change.
Lemma~\ref{lemma:bounded-differences} then implies
\begin{align*}
  \P\left(\linf{P - P_n} \ge \E[\linf{P - P_n}]
  + t \right) \le e^{-2nt^2}
\end{align*}
for $t \ge 0$, so that we need only control $\E[\linf{P - P_n}]$.
For this, we perform a standard symmetrization
argument~\cite[e.g.][Ch.~2.3]{VanDerVaartWe96}: let $P_n^0 = \frac{1}{n}
\sum_{i = 1}^n \varepsilon_i \pointmass_{\scorerv_i}$, where $\varepsilon_i
\simiid \uniform\{\pm 1\}$ are i.i.d.\ Rademacher variables
and $\pointmass_{\scorerv_i}$ denotes a point mass on $\scorerv_i$.
By introducing independent copies of $\scorerv_i$
and applying Jensen's inequality~\cite[Lemma 2.3.1]{VanDerVaartWe96},
we have the bound
\begin{equation*}
  \E\left[\linf{P_n - P}\right]
  \le 2 \E\left[\linf{P_n^0}\right]
  = 2 \E\left[\sup_{\tau \in \R} \bigg|\frac{1}{n}
    \sum_{i = 1}^n \varepsilon_i \indic{\scorerv_i > \tau}
    \bigg|\right].
\end{equation*}
Because the class of functions $\scoreval \mapsto \indic{\scoreval > \tau}$
has VC-dimension at most 1, Dudley's entropy
integral (see, e.g.~\cite[Corollary 2.2.8 and Thm.~2.6.7]{VanDerVaartWe96}
or~\cite[Eq.~(5.5.1)]{Wainwright19}) shows that
\begin{equation*}
  \E\left[\linf{P_n^0}\right]
  \le \frac{c}{\sqrt{n}}
\end{equation*}
for a numerical constant $c$.
We then obtain that for any $\gamma \ge 0$,
\begin{align*}
  \P\left(\scorerv_{n + 1} > \what{\tau}_n \mid P_n\right)
  \le \alpha + \frac{c}{\sqrt{n}} + \gamma
  ~~ \mbox{w.p.}~
  1 - e^{-2n \gamma^2}
\end{align*}
by the inequalities~\eqref{eqn:one-VC-simple-deviation}, where
$c$ is a numerical constant.

\subsection{Proof of Proposition~\ref{proposition:distinct-scores}}
\label{sec:proof-distinct-scores}


Recall the quantile mapping $q\opt$ from
the definition~\eqref{eqn:actual-quantile-mapping} and that
for fixed $\gamma \in [0, \alpha]$, the event
$\what{\tau}_n < q\opt(1 - \alpha - \gamma)$
can occur only if
$P_n(\scorerv < q\opt(1 - \alpha - \gamma)) \ge 1 - \alpha$.
Then defining $B_i = \indic{\scorerv_i < q\opt(1 - \alpha - \gamma)}$
and recalling inequality~\eqref{eqn:quantile-too-small},
we obtain
\begin{align*}
  \P(\what{\tau}_n < q\opt(1 - \alpha - \gamma))
  \le \P(\wb{B}_n \ge 1 - \alpha)
  = \P(\wb{B}_n - \E[\wb{B}_n] \ge 1 - \alpha - \E[\wb{B}_n]).
\end{align*}
Define $p(\gamma) = \E[B_i]
= \P(\scorerv < q\opt(1 - \alpha - \gamma)) < 1 - \alpha - \gamma$,
so that $t = t(\gamma) \defeq 1 - \alpha - p(\gamma) > \gamma$.
Then $\var(B_i) = p(\gamma)(1 - p(\gamma))
= (\alpha + t)(1 - \alpha - t)$, and Bernstein's inequality
implies
\begin{align*}
  \P(\wb{B}_n - \E[\wb{B}_n] \ge t)
  & \le \exp\left(-\frac{n t^2}{2 (1 - \alpha - t) (\alpha + t)
    + \frac{2}{3} t}\right) \\
  & = \exp\left(-\frac{n t^2}{2 (1 - \alpha) \alpha +
    (\frac{8}{3} - 4 \alpha) t - t^2}\right)
  \le \exp\left(-\frac{n t^2}{2(1 - \alpha) \alpha
    + \frac{8}{3} t}\right).
\end{align*}
Notably, $t \mapsto \frac{nt^2}{2(1 - \alpha) \alpha + \frac{8}{3} t}$
is increasing in $t$, so that
\begin{equation*}
  \P(\what{\tau}_n < q\opt(1 - \alpha - \gamma))
  \le \exp\left(-\frac{n \gamma^2}{2(1 - \alpha) \alpha + \frac{8}{3} \gamma}
  \right).
\end{equation*}

If the scores $\scorerv$ have a density,
$\P(\scorerv \le q\opt(\beta)) = \beta$ for any $\beta \in (0, 1)$.
Then we may also consider the event that $\what{\tau}_n > q\opt(1 - \alpha +
\gamma)$.
For this to occur, we require
\begin{equation*}
  P_n(\scorerv < q\opt(1 - \alpha + \gamma)) \le 1 - \alpha,
\end{equation*}
and defining $B_i = \indic{\scorerv_i < q\opt(1 - \alpha + \gamma)}$,
we have $\E[B_i] = 1 - \alpha + \gamma$ and so
\begin{align*}
  \P(\wb{B}_n \le 1 - \alpha)
  = \P(\wb{B}_n - \E[\wb{B}_n] \le -\gamma)
  & \le \exp\left(-\frac{n \gamma^2}{2 (1 - \alpha + \gamma)
    (\alpha - \gamma) + \frac{2}{3} \gamma}\right) \\
  & \le \exp\left(-\frac{n \gamma^2}{2 (1 - \alpha)\alpha + \frac{2}{3}
    \gamma}\right)
\end{align*}
for $\gamma \in [0, \alpha]$.
Combining the two cases, for $\gamma \ge 0$ we have
\begin{equation*}
  \max\left\{\P\left(\what{\tau}_n < q\opt(1 - \alpha - \gamma)\right),
  \P\left(\what{\tau}_n > q\opt(1 + \alpha + \gamma)\right)\right\}
  \le \exp\left(-\frac{n \gamma^2}{2\alpha(1 - \alpha) + \frac{8}{3}
    \gamma}\right).
\end{equation*}
Solving to guarantee the right hand side is at most $\delta$ yields
\begin{equation*}
  \gamma_n \defeq \frac{4 \log \frac{1}{\delta}}{3n}
  + \sqrt{\left(\frac{4}{3 n} \log \frac{1}{\delta}\right)^2 +
    \frac{2 \alpha(1 - \alpha)}{n}
    \log \frac{1}{\delta}}.
\end{equation*}
Applying a union bound implies
Proposition~\ref{proposition:distinct-scores}.

\section{Approaching conditional coverage}
\label{sec:approaching-conditional}

Keeping in mind the ideas in Sections~\ref{sec:intro-x-conditional}
and~\ref{sec:intro-sample-conditional}, we revisit conditional coverage,
but we do so conditional on the sample $P_n$ as well.
To do this, we revisit Jung et al.\ and Gibbs et al.'s
approaches~\citep{JungNoRaRo23,GibbsChCa23}, considering
quantile estimation via the quantile loss~\citep{KoenkerBa78},
which for $\alpha > 0$ is
\begin{equation*}
  \loss_\alpha(t) \defeq \alpha \hinge{t} + (1 - \alpha) \hinge{-t}.
\end{equation*}
For a random variable $Y$, the $(1 - \alpha)$-quantile $\quant_{1 -
  \alpha}(Y) \defeq \inf\{t \mid \P(Y \le t) \ge 1 - \alpha\}$ minimizes
$\poploss(t) \defeq \E[\loss_\alpha(t - Y)]$.
This is easy to see when $Y$ has a density, in which case we can take
derivatives (ignoring the measure-0 set of points where $\loss_\alpha(t -
Y)$ is non-differentiable~\cite{Bertsekas73}) to obtain
\begin{equation*}
  \poploss'(t) = \alpha\P(t - Y > 0) - (1 - \alpha) \P(t - Y \le 0)
  = \P(Y < t) - (1 - \alpha) = 0
\end{equation*}
if and only if $\P(Y < t) = 1 - \alpha$; that is,
$\quant_{1 - \alpha}(Y)$ is always \emph{a} minimizer.
(When $Y$ has point masses, a minor extension of this
argument shows that $\quant_{1 - \alpha}(Y)$ remains a minimizer.)

Following \citet{GibbsChCa23} and \citet{JungNoRaRo23},
consider a multi-dimensional quantile regression
of attempting to predict $\scorerv \in \R$ from $X \in \mc{X}$.
Let $\phi : \mc{X} \to \R^d$ be a feature mapping, and consider the population
loss $\poploss(\theta) \defeq \E[\loss_\alpha(\<\theta, \phi(X)\> - \scorerv)]$.
Then (for motivation) assuming that $\scorerv$ has a density
conditional on $X = x$, we see that
\begin{align*}
  \nabla \poploss(\theta)
  & = \alpha \E\left[\indic{\<\theta, \phi(X)\> - \scorerv > 0}
    \phi(X)\right]
  - (1 - \alpha) \E\left[\indic{\<\theta, \phi(X)\> - \scorerv \le 0} \phi(X)
    \right] \\
  & = \E\left[\left(\indic{\scorerv < \<\theta, \phi(X)\>} - (1 - \alpha)\right)
    \phi(X) \right].
\end{align*}
Now let
\begin{equation*}
  \theta\opt \in \argmin \poploss(\theta)
\end{equation*}
be a population minimizer.
Then, as \citet{GibbsChCa23} note, for \emph{any} $u \in \R^d$, we have
\begin{equation*}
  0 = \<u, \nabla \poploss(\theta\opt)\>
  = \E\left[\left(\P(\scorerv < \<\theta\opt, \phi(X)\> \mid X)
    - (1 - \alpha)\right) \cdot \<u, \phi(X)\> \right].
\end{equation*}
This connects transparently to confidence set
mappings~\cite{GibbsChCa23,JungNoRaRo23}: taking
\begin{equation*}
  \scorerv = \scoreval(X, Y)
  ~~ \mbox{and} ~~
  C_{\theta\opt}(x) \defeq \left\{y \in \mc{Y} \mid \scoreval(x, y) \le
  \<\theta\opt, \phi(X)\>\right\},
\end{equation*}
we have
\begin{align*}
  0 = \E\left[\left(\P(Y \in C(X) \mid X) - (1 - \alpha)\right)
    \<u, \phi(X)\>\right] = 0
  ~~ \mbox{for~all~} u \in \R^d.
\end{align*}
In turn, this implies the population coverage guarantee
\begin{corollary}
  \label{corollary:population-weighted-coverage}
  Assume the distribution of $\scorerv$ conditional on $X = x$ has no atoms
  for each $x \in \mc{X}$. Let $\theta\opt$ minimize $\poploss(\theta) =
  \E[\loss_\alpha(\<\theta, \phi(X)\> - \scoreval(X, Y))]$.
  Then $C_{\theta\opt}$ provides $((1 - \alpha), 0)$-weighted coverage
  (Definition~\ref{definition:weighted-coverage}) for the class $\mc{W}
  \defeq \{w \mid w(x) = \<u, \phi(x)\>\}_{u \in \R^d}$ of linear functions
  of $\phi(x)$.
\end{corollary}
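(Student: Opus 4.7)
The plan is to derive the conclusion as a first-order optimality condition for $\theta\opt$ applied to the convex loss $\poploss$; the derivation sketched in the excerpt already does the main algebra, so the task is to justify that the informal differentiation is rigorous under the no-atoms hypothesis and then to rewrite the resulting identity in the form demanded by Definition~\ref{definition:weighted-coverage}.

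First I would verify that $\poploss$ is differentiable at $\theta\opt$ with the stated gradient. Since $\loss_\alpha$ is convex and $1$-Lipschitz, $\poploss$ is convex. For fixed $X$, the map $\theta \mapsto \loss_\alpha(\<\theta, \phi(X)\> - \scorerv)$ is differentiable except on the set $\{\<\theta, \phi(X)\> = \scorerv\}$, and because $\scorerv \mid X$ has no atoms this set has probability zero conditional on $X$ (and hence unconditionally). A standard dominated-convergence argument applied to the difference quotients, which are bounded by $\|\phi(X)\|$ thanks to the Lipschitz property, then licenses interchanging derivative and expectation to recover
\begin{equation*}
  \nabla \poploss(\theta)
  = \E\left[\left(\indic{\scorerv < \<\theta, \phi(X)\>}
    - (1 - \alpha)\right) \phi(X)\right],
\end{equation*}
which is exactly the formula derived informally in the excerpt. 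Because $\theta\opt$ minimizes the convex $\poploss$, the first-order condition $\nabla \poploss(\theta\opt) = 0$ holds.

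Next I would take inner products with an arbitrary $u \in \R^d$ and use the no-atoms hypothesis once more to replace the strict inequality in the indicator by a weak one: since $\P(\scorerv = \<\theta\opt, \phi(X)\> \mid X) = 0$ almost surely,
\begin{equation*}
  \indic{\scorerv < \<\theta\opt, \phi(X)\>}
  = \indic{\scorerv \le \<\theta\opt, \phi(X)\>}
  = \indic{Y \in C_{\theta\opt}(X)}
  \quad \mbox{a.s.},
\end{equation*}
where the last equality uses $\scorerv = \scoreval(X, Y)$ together with the definition of $C_{\theta\opt}$. Substituting and using linearity of expectation yields
\begin{equation*}
  \E\left[\<u, \phi(X)\>\left(\indic{Y \in C_{\theta\opt}(X)}
    - (1 - \alpha)\right)\right] = 0
\end{equation*}
for every $u \in \R^d$, which is precisely $((1-\alpha), 0)$-weighted coverage for the class $\mc{W} = \{x \mapsto \<u, \phi(x)\>\}_{u \in \R^d}$.

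The main obstacle is really justifying the gradient formula for the non-smooth $\loss_\alpha$; once the no-atoms hypothesis eliminates the event on which $\loss_\alpha$ is kinked, the remainder is bookkeeping. An alternative route via Rockafellar-style subgradient calculus for integrals would avoid the dominated-convergence step explicitly, but would still rely on the atom-free hypothesis to collapse the subdifferential $\partial \loss_\alpha$ at the kink to a single value almost surely.
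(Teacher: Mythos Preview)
Your proposal is correct and follows essentially the same route as the paper: the corollary is stated immediately after an inline derivation that computes $\nabla\poploss(\theta)$ under the no-atoms assumption, sets it to zero at $\theta\opt$, takes inner products with $u$, and rewrites the indicator in terms of $C_{\theta\opt}$. You have simply added the technical justification (via Lipschitzness and dominated convergence) for differentiating under the expectation that the paper leaves implicit.
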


Two questions arise: first, whether the corollary
extends to $\scorerv$ which may have atoms.
A (trivial) workaround is to simply add a tiny amount of random noise to
each $\scorerv_i$.
Otherwise, it is generally possible only to provide a one-sided bound, where
we weight with nonnegative functions $w$ (we provide this in the sequel).
The more interesting question is how we can extend
these guarantees to provide sample-conditional coverage.
Adapting the arguments we use in Section~\ref{sec:baby-uniform-convergence}
to prove Proposition~\ref{proposition:sample-conditional-basic} and
Corollary~\ref{corollary:distinct-scores}
allows us to address this.


\subsection{An estimated confidence set}

The population-level confidence set $C_{\theta\opt}(x) = \{y \mid
\scoreval(x, y) \le \<\theta\opt, \phi(x)\>\}$ immediately suggests
developing an empirical analogue~\cite{GibbsChCa23, JungNoRaRo23}.
Thus, we turn to an analysis of the empirical confidence set,
considering the estimator
\begin{equation}
  \what{\theta}_n \in
  \argmin_\theta
  \E_{P_n}\left[\loss_\alpha(\<\theta, \phi(X)\> - \scorerv)\right],
  \label{eqn:empirical-quantile-estimator}
\end{equation}
which \citet{JungNoRaRo23} consider for the special
case that the feature mapping $\phi(x) = [\indic{x \in G}]_{G \in \mc{G}}$ is an
indicator vector for groups $G \subset \mc{X}$.
This gives
the confidence set
\begin{equation*}
  \what{C}_n(x) \defeq \left\{y \in \mc{Y}
  \mid \scoreval(x, y) \le \<\what{\theta}, \phi(x)\> \right\}.
\end{equation*}

We first sketch an argument that this set provides a desired approximate
weighted coverage.
By convexity,
\begin{align*}
  0 \in \sum_{i = 1}^n \partial \loss_\alpha\left(
  \<\what{\theta}, \phi(X_i)\>
  - \scorerv_i\right) \phi(X_i),
\end{align*}
which is equivalent to the statement that for
some scalars (really, dual variables) $\eta_i$ satisfying
\begin{equation}
  \label{eqn:g-gradient-calcs}
  \eta_i = \begin{cases}
    \alpha & \mbox{if}~ \<\what{\theta}, \phi(X_i)\> > \scorerv_i \\
    -(1 - \alpha) & \mbox{if}~ \<\what{\theta}, \phi(X_i)\> < \scorerv_i \\
    \in [-(1 - \alpha), \alpha]
    & \mbox{if}~ \<\what{\theta}, \phi(X_i)\> = \scorerv_i
  \end{cases}
\end{equation}
we have
\begin{align*}
  0 = \sum_{i = 1}^n \eta_i \phi(X_i).
\end{align*}
Let us proceed heuristically for now by taking ``discrete'' values for the
$\eta_i$.
If we assume that
\begin{equation*}
  \eta_i =
  \alpha \indics{\<\what{\theta}, \phi(X_i)\> > \scorerv_i}
  - (1 - \alpha) \indics{\<\what{\theta}, \phi(X_i)\> \le \scorerv_i}
  = \indics{\scorerv_i < \<\what{\theta}, \phi(X_i)\>}
  - (1 - \alpha),
\end{equation*}
then
\begin{equation*}
  0 = \sum_{i = 1}^n
  \phi(X_i) \left(\indic{\scorerv_i \le \<\what{\theta}, \phi(X_i)\>}
  -  (1 - \alpha)\right)
  = \sum_{i = 1}^n
  \phi(X_i) \left(\indic{Y_i \in \what{C}_n(X_i)}
  -  (1 - \alpha)\right),
\end{equation*}
the empirical version of
Corollary~\ref{corollary:population-weighted-coverage}.
If we could argue that this (heuristic)
empirical average concentrates around its expectation,
we would obtain
\begin{equation*}
  \E\left[\phi(X_{n + 1}) \left(\indic{Y_{n + 1} \not\in \what{C}_n(X_{n+1})}
    - \alpha\right) \mid P_n\right]
  \approx \frac{1}{n}
  \sum_{i = 1}^n \phi(X_i)
  \left(\indic{\scorerv_i > \<\what{\theta}, \phi(X_i)\>}
  -  \alpha\right)
  = 0,
\end{equation*}
weighted coverage for a new example $(X_{n + 1}, Y_{n + 1})$.

\newcommand{\ballphi}{\ball_\phi}

To make this argument rigorous, we employ a bit of empirical
process theory and measure concentration.
For now and for simplicity, we assume that $\phi(x)$ is bounded in
$\ell_2$, so that $\ltwo{\phi(x)} \le \radphi$ for all $x$,
and let $\ball_2 = \{u \in \R^d \mid \ltwo{u} \le 1\}$ be the
$\ell_2$-ball.

\begin{theorem}
  \label{theorem:approximate-conditional-coverage}
  Assume the boundedness conditions above and that $n \ge d$.
  Let $\what{\theta}$ be the empirical
  minimizer~\eqref{eqn:empirical-quantile-estimator} of the
  $\alpha$-quantile loss, let $\what{h}(x) = \<\what{\theta}, \phi(x)\>$,
  and define the confidence set
  \begin{equation*}
    \what{C}_n(x) \defeq \left\{y \in \mc{Y}
    \mid \scoreval(x, y) \le \what{h}(x) \right\}.
  \end{equation*}
  Then there exists a constant
  $c \le 2 + \alpha/\sqrt{d}$ such that for $t \ge 0$,
  with probability at least $1 - e^{-n t^2}$ over the draw
  of the sample $P_n$,
  \begin{equation*}
    \E\left[\<u, \phi(X_{n + 1})\>
      \left(\indic{Y_{n + 1} \in \what{C}_n(X_{n+1})}
      - (1 - \alpha)\right) \mid P_n \right]
    \ge -c \radphi \left(\sqrt{\frac{d}{n}  \log \frac{n}{d}}
    + t \right)
  \end{equation*}
  simultaneously for all $u \in \ball_2$
  satisfying $\<u, \phi(x)\> \ge 0$ for all $x \in \mc{X}$.
  
  If additionally the scores $\scorerv_i$ are distinct with probability $1$,
  then with the same probability,
  \begin{equation*}
    \E\left[\<u, \phi(X_{n + 1})\>
      \left(\indic{Y_{n + 1} \in \what{C}_n(X_{n+1})}
      - (1 - \alpha)\right) \mid P_n \right]
    \le 3 \radphi \left(\sqrt{\frac{d}{n} \log \frac{n}{d}}
    + t + \frac{d}{3n} \right).
  \end{equation*}
  simultaneously for all $u \in \ball_2$.
\end{theorem}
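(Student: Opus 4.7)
The plan is to combine a KKT/structural analysis of $\what\theta$ with a uniform-convergence argument on the linear-indicator class indexed by $(\theta, u)$. Optimality in the convex program~\eqref{eqn:empirical-quantile-estimator} produces subgradients $\eta_i \in \partial \loss_\alpha(\<\what\theta, \phi(X_i)\> - \scorerv_i)$ as in~\eqref{eqn:g-gradient-calcs} satisfying $\sum_i \eta_i \phi(X_i) = 0$. Taking the ``clean'' choice $\eta_i^\star \defeq \indic{\scorerv_i \le \<\what\theta, \phi(X_i)\>} - (1-\alpha)$, we have $\eta_i^\star = \eta_i + \delta_i$ with $\delta_i \in [0,1]$ supported on the boundary set $\{i : \scorerv_i = \<\what\theta, \phi(X_i)\>\}$, so
\begin{equation*}
  \frac{1}{n}\sum_i \<u, \phi(X_i)\>\bigl(\indic{Y_i \in \what{C}_n(X_i)} - (1-\alpha)\bigr)
  = \frac{1}{n}\sum_{i : \delta_i \ne 0} \delta_i \<u, \phi(X_i)\>.
\end{equation*}
When $\<u, \phi(x)\> \ge 0$ for all $x$, the right-hand side is nonnegative, yielding an empirical one-sided bound at no cost. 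Under distinct scores, a standard linear-programming extreme-point argument shows $|\{i : \delta_i \ne 0\}| \le d$, so the right-hand side is bounded in absolute value by $d\,\radphi/n$ for every $u \in \ball_2$, giving the two-sided KKT inequality needed for the upper bound.

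Next, I would transfer these empirical statements to the sample-conditional population statement by uniformly controlling the empirical process
\begin{equation*}
  \Delta_n(\theta, u) \defeq \E\!\left[\<u, \phi(X)\>\bigl(\indic{\scorerv \le \<\theta, \phi(X)\>} - (1-\alpha)\bigr)\right]
  - \frac{1}{n}\sum_{i=1}^n \<u, \phi(X_i)\>\bigl(\indic{\scorerv_i \le \<\theta, \phi(X_i)\>} - (1-\alpha)\bigr)
\end{equation*}
over $\theta \in \R^d$ and $u \in \ball_2$. The indicator class $\{(x,s) \mapsto \indic{s \le \<\theta, \phi(x)\>}\}$ is a class of halfspaces in $\R^{d+1}$ after the lift $(x,s) \mapsto (\phi(x), s)$, hence has VC dimension at most $d+1$, while $\{x \mapsto \<u, \phi(x)\> : u \in \ball_2\}$ is VC-subgraph of index $O(d)$ with envelope $\radphi$. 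A product covering bound plus Dudley's entropy integral yields $\E\bigl[\sup_{\theta, u \in \ball_2} |\Delta_n(\theta, u)|\bigr] \le c\,\radphi \sqrt{(d/n)\log(n/d)}$, and the bounded-differences inequality (Lemma~\ref{lemma:bounded-differences}) with increments $2\radphi/n$ upgrades the expectation into the high-probability tail $\exp(-n t^2)$ after absorbing $\radphi$ into the $t$ term. Combining with the one-sided KKT bound produces the stated lower bound, while the two-sided $|F_n(\what\theta, u)| \le d\,\radphi/n$ under distinct scores produces the upper bound (the origin of the extra $d/(3n)$ term).

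The principal technical obstacle is obtaining the $\sqrt{(d/n)\log(n/d)}$ rate rather than a cruder $\sqrt{d^2/n}$ that a coordinatewise treatment of the vector $\frac{1}{n}\sum_i \phi(X_i)\indic{\scorerv_i \le \<\theta, \phi(X_i)\>}$ would produce: the remedy is to handle the sup over $u \in \ball_2$ inside the empirical-process bound via a single product-class covering argument, not by first taking a Euclidean norm and then bounding $d$ scalar processes. A secondary subtlety is justifying the boundary count $|\{i : \delta_i \ne 0\}| \le d$ in the distinct-score case; this follows from the fact that $\what\theta \in \R^d$ sits at an extreme point of the piecewise-linear objective and is pinned by $d$ linearly independent active pieces among the $n$ subgradient constraints, a standard basic-feasible-solution property of quantile regression.
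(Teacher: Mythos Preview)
Your proposal is correct and follows the same high-level architecture as the paper: a first-order/KKT analysis of the quantile-regression optimum yields the empirical one-sided bound (and the two-sided bound under distinct scores via the $|\mc{I}_0|\le d$ boundary count), which is then transferred to the population via a uniform-convergence bound over $(\theta,u)$, with bounded differences supplying the $e^{-nt^2}$ tail. All of this matches the paper.

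The one genuine difference is in how you control the expected supremum $\E\bigl[\sup_{\theta,\,u\in\ball_2}|\Delta_n(\theta,u)|\bigr]$. You propose a product-class VC-subgraph covering bound together with Dudley's entropy integral; this works and delivers the stated $\radphi\sqrt{(d/n)\log(n/d)}$ rate (indeed the paper proves the requisite product-VC bound as Lemma~\ref{lemma:product-vc} for use in the sharper Section~4 results). The paper instead exploits the Hilbert-space structure of $\ball_2$ directly (Lemma~\ref{lemma:bound-deviation-ltwo}): it first takes the sup over $u\in\ball_2$ to obtain an $\ell_2$-norm, symmetrizes, then---conditionally on the data---applies Pinelis's sub-Gaussian inequality for Hilbert-space-valued Rademacher sums and a union bound over the at most $(en/d)^d$ indicator patterns that $\mc{H}$ realizes on the sample (Sauer--Shelah). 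This route never covers the linear class $\{x\mapsto\<u,\phi(x)\>\}$ at all, and it is what produces the explicit constant $c\le 2+\alpha/\sqrt{d}$ in the theorem; your Dudley-based argument would only give an unspecified universal constant. Conversely, your product-covering route is more portable: it would extend without modification to other norm balls for $u$, whereas the paper's Pinelis step is specific to $\ell_2$.
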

\noindent
We defer the proof of Theorem~\ref{theorem:approximate-conditional-coverage}
to Section~\ref{sec:proof-approximate-conditional-coverage}.

Translating things to guarantee (nearly) exact coverage and distinct scores
even when the naive scores may not be distinct, we can
randomize, though we will assume the randomization scale makes it unlikely
to modify results (e.g., perturbing by noise at a scale of $10^{-10}$).
In this case, let $U_i$, $i = 1, 2, \ldots, n + 1$ be i.i.d.\ random
variables with a density on $\R$, and define the scores $\scorerv_i =
\scoreval(X_i, Y_i)$ as usual and perturbed scores $\scorerv_i + U_i$; then
(abusing notation) let $\what{\theta}_n$ be an empirical estimator
\begin{equation*}
  \what{\theta}_n \in \argmin_{\theta} \E_{P_n}\left[\loss_\alpha\left(
    \<\theta, \phi(X)\> - \scorerv - U \right)\right].
\end{equation*}
We obtain the following corollary.
\begin{corollary}
  Let $\what{\theta}_n$ be as above and define
  $\what{h}(x) = \<\what{\theta}_n, \phi(x)\>$. Define the
  confidence set
  \begin{equation*}
    \what{C}_n(x, u) \defeq \left\{y \in \mc{Y} \mid \scoreval(x, y)
    \le \what{h}(x) + u \right\}.
  \end{equation*}
  Then there exists a numerical constant $c \le 3$
  such that for
  $t \ge 0$, with probability at least $1 - e^{-n t^2}$,
  the randomized confidence set $\what{C}_n(\cdot, U)$ achieves
  $((1 - \alpha), \epsilon_n)$-weighted coverage
  (Definition~\ref{definition:weighted-coverage})
  for the class $\mc{W} \defeq \{w(x) = \<u, \phi(x)\>\}_{u \in \ball_2}$
  with
  \begin{equation*}
    \epsilon_n \le c \radphi \left(
    \sqrt{\frac{d \log \frac{n}{d}}{n}}
    + \frac{d}{n} + t \right).
  \end{equation*}
\end{corollary}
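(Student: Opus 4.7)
The plan is to reduce the corollary directly to the two-sided conclusion of Theorem~\ref{theorem:approximate-conditional-coverage}. The perturbation $U_i$ exists only to enforce the ``distinct scores'' hypothesis: because each $U_i$ has a density on $\R$, the perturbed scores $\scorerv_i \pm U_i$ are distinct with probability one, regardless of any atoms in the conditional law of $\scorerv$ given $X$. All the real work is done by the theorem; the only task here is to line up the notation so that the corollary falls out as an immediate consequence.

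Concretely, I would fold the noise into the score by defining the augmented random scores $\tilde{\scorerv}_i \defeq \scorerv_i + U_i$ and treating them as the scores to which Theorem~\ref{theorem:approximate-conditional-coverage} is applied. The estimator $\what{\theta}_n$ from the corollary is, by construction, the empirical $(1-\alpha)$-quantile regressor of $(\phi(X_i), \tilde{\scorerv}_i)_{i=1}^n$. At test time, taking $U$ symmetric about zero without loss of generality, the event $\{Y_{n+1} \in \what{C}_n(X_{n+1}, U_{n+1})\} = \{\scorerv_{n+1} - U_{n+1} \le \what{h}(X_{n+1})\}$ has the same probability as $\{\tilde{\scorerv}_{n+1} \le \what{h}(X_{n+1})\}$, since $-U_{n+1} \stackrel{d}{=} U_{n+1}$ and $U_{n+1}$ is independent of the training sample. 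Crucially, the $\tilde{\scorerv}_i$ are distinct almost surely, so the distinct-scores branch of Theorem~\ref{theorem:approximate-conditional-coverage} applies.

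Applying that two-sided bound to the $\tilde{\scorerv}_i$ then yields, for every $u \in \ball_2$ and with probability at least $1 - e^{-n t^2}$,
\begin{equation*}
  \E\!\left[\<u, \phi(X_{n+1})\>
  \left(\indic{Y_{n+1} \in \what{C}_n(X_{n+1}, U_{n+1})} - (1-\alpha)\right)
  \mid P_n\right]
  \le 3 \radphi \left(\sqrt{\frac{d \log(n/d)}{n}} + t + \frac{d}{3n}\right).
\end{equation*}
Applying the same inequality with $-u$ in place of $u$ furnishes the matching lower bound, so the absolute value appearing in Definition~\ref{definition:weighted-coverage} is controlled by the same right-hand side. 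Rearranging gives the claimed $\epsilon_n$ with constant $c \le 3$. The only real subtlety to verify carefully is that the sign convention and the exchangeability of the augmented scores $(\tilde{\scorerv}_i)_{i=1}^{n+1}$ line up so that Theorem~\ref{theorem:approximate-conditional-coverage} may be invoked verbatim; once that bookkeeping is done, the result is a direct corollary.
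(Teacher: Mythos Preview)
Your proposal is correct and is precisely the argument the paper intends: the corollary is stated without proof because it follows immediately from the distinct-scores branch of Theorem~\ref{theorem:approximate-conditional-coverage} applied to the perturbed scores $\tilde{\scorerv}_i = \scorerv_i + U_i$, and then replacing $u$ by $-u$ (which stays in $\ball_2$) to obtain the matching lower bound required by Definition~\ref{definition:weighted-coverage}.

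The only informality is your ``WLOG symmetric'' step for the sign of $U$. A cleaner way to avoid it is to apply the theorem to the augmented sample $((X_i,U_i),Y_i)$ with score function $\tilde{\scorefunc}((x,u'),y)=\scorefunc(x,y)+u'$ and the same feature map $\phi(x)$; the theorem then directly controls the event $\{\scorerv_{n+1}+U_{n+1}\le \what{h}(X_{n+1})\}=\{Y_{n+1}\in\what{C}_n(X_{n+1},-U_{n+1})\}$, and since $-U_{n+1}$ also has a density and is independent of everything else, the randomized set is of the stated form without any symmetry assumption. The paper itself is loose on this sign convention, so your handling is entirely adequate.
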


As another corollary to
Theorem~\ref{theorem:approximate-conditional-coverage}, let us assume that
$\mc{G} = \{G_1, \ldots, G_d\}$ consists of sets $G_i$ partitioning
$\mc{X}$, and define the
feature indicator $\phi_\mc{G}(x) = (1, \indic{x \in G_1}, \ldots, \indic{x \in
  G_d})$. With
this particular choice, we obtain the following result:
\begin{corollary}
  \label{corollary:group-conditional-baby}
  Let $\what{\theta}$ be as in
  Theorem~\ref{theorem:approximate-conditional-coverage}
  and $\phi = \phi_{\mc{G}}$ be the group feature function.
  Then simultaneously for all groups $G_j$,
  \begin{equation*}
    \P(Y_{n + 1} \in \what{C}_n(X_{n + 1}) \mid X_{n + 1} \in G_j, P_n)
    \ge 1 - \alpha - \frac{4}{\P(X_{n + 1} \in G_j)}
    \left(\sqrt{\frac{d}{n} \log \frac{n}{d}} + t \right)
  \end{equation*}
  with probability at least $1 - e^{-nt^2}$.
\end{corollary}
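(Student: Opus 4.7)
The plan is to reduce to Theorem~\ref{theorem:approximate-conditional-coverage} by choosing, for each group $G_j$, a weight vector $u$ whose linear combination with $\phi_\mc{G}$ exactly realizes the group indicator. Specifically, taking $u = e_{j+1}$, the standard basis vector selecting the coordinate of $\phi_\mc{G}$ corresponding to $G_j$, we have $\<u, \phi_\mc{G}(x)\> = \indic{x \in G_j} \ge 0$, and $\ltwo{u} = 1$ so $u \in \ball_2$. Since the groups partition $\mc{X}$, exactly one indicator coordinate is active at each $x$, so $\ltwo{\phi_\mc{G}(x)}^2 = 1 + 1 = 2$, giving $\radphi = \sqrt{2}$. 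Because Theorem~\ref{theorem:approximate-conditional-coverage} already provides its one-sided lower bound \emph{simultaneously} over all nonnegative linear combinations, no union bound over $G_j$ is needed; the single event of probability $\ge 1 - e^{-nt^2}$ covers all groups at once.

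Next I would translate the theorem's inequality into conditional form via the tower property: for the indicator $w(x) = \indic{x \in G_j}$,
\begin{equation*}
  \E\bigl[w(X_{n+1})\bigl(\indic{Y_{n+1} \in \what{C}_n(X_{n+1})} - (1-\alpha)\bigr) \mid P_n\bigr]
  = \P(X_{n+1} \in G_j) \bigl(\P(Y_{n+1} \in \what{C}_n(X_{n+1}) \mid X_{n+1} \in G_j, P_n) - (1-\alpha)\bigr).
\end{equation*}
Dividing by $\P(X_{n+1} \in G_j)$ and using the theorem's lower bound with $c\radphi \le \sqrt{2}(2 + \alpha/\sqrt{d}) \le 4$ yields exactly the stated inequality. (The feature dimension is technically $d+1$ because of the constant coordinate, but $d+1 \le 2d$ for $d \ge 1$ is absorbed into the constant $4$; alternatively, one can verify $\sqrt{2}(2 + \alpha/\sqrt{d+1}) \le 4$ directly for all $\alpha \in (0,1)$ and $d \ge 1$.)

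The only substantive step is the calibration of constants: confirming that $c\radphi \le 4$ after substituting $\radphi = \sqrt{2}$ and $c \le 2 + \alpha/\sqrt{d}$ into the theorem's bound, and that the logarithmic factor $\log(n/d)$ in the theorem matches the corollary's form. I don't anticipate any real obstacle---the argument is essentially a specialization of Theorem~\ref{theorem:approximate-conditional-coverage} to the indicator basis of a partition, and the division by $\P(X_{n+1} \in G_j)$ in the corollary statement is exactly the price paid for converting a weighted expectation into a conditional probability, making the bound informative only for groups that are not too rare.
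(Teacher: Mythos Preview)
Your proposal is correct and matches the paper's approach: the paper presents this corollary as an immediate consequence of Theorem~\ref{theorem:approximate-conditional-coverage} (and for the analogous sharper corollary later explicitly says ``the result follows immediately upon considering the standard basis vectors $u = e_i$''), which is exactly your specialization $u = e_{j+1}$ followed by the tower-property division by $\P(X_{n+1}\in G_j)$. One small wrinkle in your constant bookkeeping: your verification $\sqrt{2}(2+\alpha/\sqrt{d+1})\le 4$ handles the $t$ term, but converting $\sqrt{(d+1)/n\,\log(n/(d+1))}$ to $\sqrt{d/n\,\log(n/d)}$ costs an extra $\sqrt{2}$, so the constant $4$ is not quite reached that way; the cleanest fix is to note that since the $G_j$ partition $\mc{X}$, the constant coordinate of $\phi_\mc{G}$ is redundant and the effective feature dimension (and VC dimension in the theorem's proof) is $d$ with $\radphi=1$, giving $c\radphi\le 2+\alpha/\sqrt{d}\le 3<4$.
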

\noindent
We will sharpen this inequality via more sophisticated
arguments in the sequel.

It is instructive here, however,
to compare this guarantee to
that the
full-conformal approach~\eqref{eqn:implicit-full-confidence-set},
which depends on $X_{n+1}$, provides.
%
The construction~\eqref{eqn:implicit-full-confidence-set} appears
appears to obtain better coverage than the more basic approaches
here~\cite[Fig.~3]{GibbsChCa23}, but it can be more
computationally challenging~\cite[Fig.~6]{GibbsChCa23}.
Indeed, \citet{GibbsChCa23} show that a randomized version of their
procedure~\eqref{eqn:implicit-full-confidence-set} achieves
\begin{equation*}
  \P(Y_{n + 1} \in \what{C}_n(X_{n+1}) \mid X_{n + 1} \in G)
  = 1 - \alpha ~~ \mbox{for~all~} G \in \mc{G}.
\end{equation*}
This can be substantially sharper than the guarantee
Corollary~\ref{corollary:group-conditional-baby} provides, as our
sample-conditional coverage guarantees are not quite so exact; we revisit
these points in experiments.

\subsection{Proof of Theorem~\ref{theorem:approximate-conditional-coverage}}
\label{sec:proof-approximate-conditional-coverage}

We use the empirical process notation
$P_n f = \frac{1}{n} \sum_{i = 1}^n f(X_i)$ for shorthand.
Recall that for a convex function $f$, the directional
derivative $f'(x; u) = \lim_{t \downarrow 0} \frac{f(x + tu) - f(x)}{t}$
exists and satisfies $f'(x; u) = \sup\{\<g, u\> \mid g \in \partial f(x)\}$.
Thus, by definition of optimality,
\begin{align*}
  P_n \loss_\alpha'(\<\what{\theta}, \phi(X)\> - \scorerv; u) \ge 0
\end{align*}
for all $u$.
Let $u$ be such that $\<u, \phi(x)\> \ge 0$ for all $x \in \mc{X}$. Then
\begin{equation*}
  \loss_\alpha'(\<\theta, \phi(x)\> - \scoreval; u)
  = \<\phi(x), u\>
  \left[\alpha \indic{\<\theta, \phi(x)\> \ge \scoreval}
    - (1 - \alpha) \indic{\<\theta, \phi(x)\> < \scoreval}\right].
\end{equation*}
Then by the first-order optimality condition we obtain
\begin{align*}
  0 & \le \left\<u, \alpha P_n \phi(X) \indic{\<\what{\theta}, \phi(X)\>
    \ge \scorerv} - (1 - \alpha) P_n\phi(X)
  \indic{\<\what{\theta}, \phi(X)\> < \scorerv}\right\> \\
  & = \left\<u, \alpha P_n \phi(X)
  - P_n \phi(X) \indic{\<\what{\theta}, \phi(X)\>
    < \scorerv} \right\>.
\end{align*}
Suppose that we demonstrate that
\begin{equation*}
  \ltwobigg{\frac{1}{n} \sum_{i = 1}^n \phi(X_i) \indic{\scorerv_i >
      \<\theta, \phi(X_i)\>}
    -
  \E_P[\phi(X) \indic{\scorerv > \<\theta, \phi(X)\>}]} \le \epsilon
\end{equation*}
uniformly over $\theta \in \R^d$.
Then we would obtain
\begin{equation*}
  0 \le \left\<u, P_n \phi(X) \left(\alpha - \indic{\scorerv >
    \<\what{\theta}, \phi(X)\>}\right)\right\>
  \le \E_P\left[\<u, \phi(X)\> \left(\alpha - \indic{\scorerv
      > \<\what{\theta}, \phi(X)\>}\right)\right]
  + \epsilon,
\end{equation*}
for all $u \in \ball_2$ with $\<u, \phi(x)\> \ge 0$ for all $x \in \mc{X}$,
that is,
\begin{equation}
  \label{eqn:one-sided-thing-to-prove}
  \E_P\left[\<u, \phi(X)\> \left(
    \indic{Y \not \in \what{C}(X)} - \alpha\right)\right]
  \le \epsilon,
\end{equation}
as $y \not \in \what{C}(x)$ if and only if
$\scoreval(x, y) > \<\what{\theta}, \phi(x)\>$.
With appropriate $\epsilon$, this will give the first claim of the theorem.

To that end, we abstract a bit and let $\mc{H} \subset \{\mc{X} \to \R\}$ be
(for now) any collection of functions, and consider the process defined by
\begin{equation*}
  Z_n(h) \defeq \frac{1}{n}
  \sum_{i = 1}^n \phi(X_i) \indic{\scorerv_i > h(X_i)}.
\end{equation*}
When $\mc{H}$ is a VC-class, for each coordinate $j$, functions of the form
$\phi_j(x) \indic{\scoreval > h(x)}$ are VC-subgraph~\cite[Lemma
  2.6.18]{VanDerVaartWe96}.
So (at least abstractly) we expect $Z_n$ to concentrate around its
expectations at a reasonable rate.
The following technical lemma,
whose proof we provide
in Appendix~\ref{sec:proof-bound-deviation-ltwo}, provides one variant of this.
\begin{lemma}
  \label{lemma:bound-deviation-ltwo}
  Let $\ball_2 = \{u : \ltwo{u} \le 1\}$
  and $\mc{H}$ have VC-dimension $k$.
  Then
  \begin{equation*}
    \E\left[\sup_{h \in \mc{H}, u \in \ball_2}
      \<u, Z_n(h) - \E[Z_n(h)]\>\right]
    \le 2 \sqrt{\frac{k \log \frac{n e}{k}}{n}}
    \E\bigg[\frac{1}{n} \sum_{i = 1}^n \norm{\phi(X_i)}^2\bigg]^{1/2}.
  \end{equation*}
\end{lemma}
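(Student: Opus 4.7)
The plan is to combine symmetrization with the exact identity $\sup_{u \in \ball_2} \<u, v\> = \ltwo{v}$ and Sauer's lemma, so that the supremum over the infinite product class $\ball_2 \times \mc{H}$ collapses to a maximum over at most $(en/k)^k$ binary vectors; the final bound then follows from sub-Gaussian concentration of a Euclidean norm as a Lipschitz function of Rademacher variables. The key point is to avoid a Dudley-style chaining that treats $\ball_2$ and $\mc{H}$ as separate classes, which would produce an unwanted $\sqrt{d}$ factor.

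Concretely, I would first apply a standard symmetrization (e.g., Lemma~2.3.1 of \citet{VanDerVaartWe96}) to obtain
\begin{equation*}
\E \sup_{u \in \ball_2,\, h \in \mc{H}} \<u, Z_n(h) - \E[Z_n(h)]\>
\le 2\, \E\, \E_\varepsilon \sup_{h \in \mc{H}} \ltwobigg{\frac{1}{n} \sum_{i=1}^n \varepsilon_i \phi(X_i) \indic{\scorerv_i > h(X_i)}},
\end{equation*}
where I have used $\sup_{u \in \ball_2}\<u, v\> = \ltwo{v}$ to evaluate the inner supremum in $u$ exactly. Next, I would condition on $(X_i, \scorerv_i)_{i=1}^n$ and invoke Sauer's lemma: since $\mc{H}$ has VC dimension $k$, the set of distinct binary patterns $\mc{B} := \{(\indic{\scorerv_i > h(X_i)})_{i=1}^n : h \in \mc{H}\}$ has cardinality at most $(en/k)^k$, so the supremum over $\mc{H}$ collapses to $\max_{b \in \mc{B}} \ltwo{R_n(b)}$ with $R_n(b) := \frac{1}{n}\sum_i \varepsilon_i \phi(X_i) b_i$.

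For any fixed $b \in \mc{B}$, Jensen yields $\E_\varepsilon \ltwo{R_n(b)} \le (\E_\varepsilon \ltwo{R_n(b)}^2)^{1/2} \le \sigma_n/\sqrt{n}$ where $\sigma_n^2 := \frac{1}{n}\sum_i \ltwo{\phi(X_i)}^2$, and viewed as a function of $\varepsilon \in \{\pm 1\}^n$ the map $\varepsilon \mapsto \ltwo{R_n(b)}$ has bounded differences with constants $c_i = 2\ltwo{\phi(X_i)}/n$, so by McDiarmid's inequality it is sub-Gaussian around its mean with parameter of order $\sigma_n/\sqrt{n}$. A standard sub-Gaussian maximal inequality over $|\mc{B}|$ points then gives
\begin{equation*}
\E_\varepsilon \max_{b \in \mc{B}} \ltwo{R_n(b)} \lesssim \frac{\sigma_n}{\sqrt{n}}\left(1 + \sqrt{2 \log |\mc{B}|}\right) \lesssim \sigma_n \sqrt{\frac{k \log(ne/k)}{n}},
\end{equation*}
and taking outer expectation followed by Jensen ($\E \sigma_n \le \sqrt{\E \sigma_n^2}$) produces the claimed form, modulo numerical constants that will need to be tracked carefully to hit the advertised factor of $2$. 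The main technical obstacle is exactly this first step: a naive covering-based Dudley integral over $\{g_{u,h}(x,s) = \<u, \phi(x)\>\indic{s > h(x)} : u \in \ball_2, h \in \mc{H}\}$ yields a log-covering of order $(d + k)\log(1/\epsilon)$, giving a bound of order $\sqrt{(d+k)\log(n)/n}$; removing the $\sqrt{d}$ dependence requires the exact reduction $\sup_u \<u, v\> = \ltwo{v}$ followed by Sauer, so that only the VC parameter $k$ appears under the square root.
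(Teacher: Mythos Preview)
Your proposal is correct and follows essentially the same route as the paper: symmetrize, collapse $\sup_{u \in \ball_2}\<u,\cdot\>$ to the $\ell_2$-norm, apply Sauer--Shelah to reduce the supremum over $\mc{H}$ to a maximum over at most $(en/k)^k$ binary patterns, and finish with a sub-Gaussian maximal inequality. The only difference is the concentration step: the paper invokes Pinelis's dimension-free Hilbert-space inequality \cite[Thm.~3.5]{Pinelis94} to obtain a sub-Gaussian tail for $\ltwo{\sum_i \varepsilon_i \phi(X_i) v_i}$ directly around zero, whereas you use McDiarmid around the mean and handle the mean separately via Jensen---both yield the same sub-Gaussian parameter and the same bound up to absolute constants.
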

\noindent
The trivial inequality $\E[\sup_{u \in \ballphi} \<u, P_n \phi(X) -
  \E[\phi(X)]\>] \le \frac{1}{\sqrt{n}} \E[\ltwo{\phi(X)}^2]^{1/2}$
addresses terms involving $P_n \phi(X) \alpha$ above.

We can extend the lemma by homogeneity to capture arbitrary vectors.
To do so, note that
if we change a single example $(X_i, \scorerv_i)$, then
$\<u, Z_n(h)\>$ changes by at most
$n^{-1} \sup_x \<u, \phi(x)\> \le n^{-1} \ltwo{u} \sup_x \ltwo{\phi(x)}$.
Using homogeneity, for any scalar $t$ there exists $u \in \R^d$ such that
$\<u, Z_n(h) - \E[Z_n(h)]\> \ge \ltwo{u} t$ if and only if there exists $u
\in \sphere^{d-1}$ such that $\<u, Z_n(h) - \E[Z_n(h)]\> \ge t$.
So if $\radphi = \sup_{x \in \mc{X}} \ltwo{\phi(x)}$, we obtain by bounded
differences (Lemma~\ref{lemma:bounded-differences}) and homogeneity that
\begin{equation*}
  \P\left(\sup_{u \neq 0, h \in \mc{H}}
  \frac{\<u, Z_n(h) - \E[Z_n(h)]\>}{\ltwo{u}}
  \ge \radphi t
  +
  \E\left[\sup_{u \in \sphere^{d-1}, h \in \mc{H}}
    \<u, Z_n(h) - \E[Z_n(h)]\>\right]
  \right)
  \le e^{-n t^2}.
\end{equation*}
Summarizing, we have proved the following proposition.
\begin{proposition}
  \label{proposition:high-prob-deviation}
  Let $\mc{H}$ have VC-dimension $k$. Then for $t \ge 0$,
  \begin{equation*}
    \P\left(\sup_{u \neq 0, h \in \mc{H}}
    \frac{\<u, Z_n(h) - \E[Z_n(h)]\>}{\ltwo{u}}
    \ge 
    2 \radphi \sqrt{\frac{k \log \frac{n}{k}}{n}} + \radphi t \right)
    \le e^{-n t^2}.
  \end{equation*}
\end{proposition}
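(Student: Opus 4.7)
The plan is exactly the homogeneity-plus-concentration argument sketched in the paragraphs preceding the statement: first reduce the supremum over $u \neq 0$ to one over $u \in \sphere^{d-1}$ by positive homogeneity, then bound the expected supremum using the VC-entropy estimate of Lemma~\ref{lemma:bound-deviation-ltwo}, and finally concentrate around this expectation via the bounded-differences inequality (Lemma~\ref{lemma:bounded-differences}).

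For the first two steps, positive homogeneity of the ratio $\<u, v\>/\ltwo{u}$ gives $\sup_{u \neq 0}\<u, v\>/\ltwo{u} = \sup_{u \in \sphere^{d-1}}\<u, v\>$ for every $v \in \R^d$, so it suffices to control
\[
F(P_n) \defeq \sup_{h \in \mc{H},\, u \in \sphere^{d-1}} \<u, Z_n(h) - \E[Z_n(h)]\>.
\]
Since $\sphere^{d-1} \subset \ball_2$ and $\ltwo{\phi(X_i)} \le \radphi$ almost surely, Lemma~\ref{lemma:bound-deviation-ltwo} applied with this $\mc{H}$ yields the expectation bound
\[
\E[F(P_n)] \le 2 \radphi \sqrt{\frac{k \log(ne/k)}{n}} \le 2 \radphi \sqrt{\frac{k \log(n/k)}{n}},
\]
absorbing the factor of $e$ into the constant (or using $\log(ne/k) \le 2 \log(n/k)$ once $n/k$ is larger than a numerical constant, which covers the range of interest).

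It remains to concentrate $F(P_n)$ around its mean. Replacing one sample $(X_i, \scorerv_i)$ by an independent copy alters a single summand of $Z_n(h)$, and because each summand $\frac{1}{n}\phi(X_i)\indic{\scorerv_i > h(X_i)}$ has $\ell_2$ norm at most $\radphi/n$, the vector $Z_n(h)$ changes by at most $2\radphi/n$ in $\ell_2$ norm uniformly in $h$. Since $\sup_{u \in \sphere^{d-1}}\<u, \cdot\>$ is $1$-Lipschitz in $\ell_2$ and a supremum of bounded-differences functionals inherits the same bound, $F(P_n)$ satisfies $c_i$-bounded differences with $c_i$ of order $\radphi/n$. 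Lemma~\ref{lemma:bounded-differences} then gives $\P(F(P_n) \ge \E[F(P_n)] + \radphi t) \le e^{-n t^2}$, which combined with the expectation bound yields the statement. The main thing to book-keep is the exact bounded-differences constant so that the exponent lands at $e^{-n t^2}$ rather than a fraction of it; the VC-subgraph and entropy integral work is already packaged inside Lemma~\ref{lemma:bound-deviation-ltwo}, so no further empirical-process machinery is needed here.
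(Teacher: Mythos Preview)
Your proposal is correct and follows essentially the same route as the paper: reduce to $u\in\sphere^{d-1}$ by homogeneity, bound the expectation via Lemma~\ref{lemma:bound-deviation-ltwo}, then concentrate with bounded differences. Your caveat about the exact bounded-differences constant is apt—the paper is equally casual here, and a factor of $2$ in $c_i$ (hence a $1/2$ in the exponent) would not change anything substantive.
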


By taking $\mc{H} = \{h : h(x) = \<\theta, \phi(x)\>\}_{\theta \in \R^d}$,
which has VC-dimension $d$, in
Proposition~\ref{proposition:high-prob-deviation}, we have thus shown that
inequality~\eqref{eqn:one-sided-thing-to-prove} holds with
\begin{equation*}
  \epsilon = 
  2 \radphi \sqrt{\frac{d \log \frac{n}{d}}{n}} + \radphi t
  + \frac{\radphi \alpha}{\sqrt{n}}
\end{equation*}
with
probability at least $1 - e^{-nt^2}$, which is the first claim of
Theorem~\ref{theorem:approximate-conditional-coverage}.


Now we turn to the second claim of
Theorem~\ref{theorem:approximate-conditional-coverage}, which
applies when the scores $\scorerv_i$ are distinct with probability
1.
Recall the definition~\eqref{eqn:g-gradient-calcs} of the subgradient terms
$g_i$, and define the index sets $\mc{I}_+ = \{i \mid \<\what{\theta},
\phi(X_i)\> > \scorerv_i\}$, $\mc{I}_- = \{i \mid \<\what{\theta},
\phi(X_i)\> < \scorerv_i\}$, and $\mc{I}_0 = \{i \mid \<\what{\theta},
\phi(X_i)\> = \scorerv_i\}$.
Then
\begin{align*}
  \lefteqn{\sum_{i = 1}^n \phi(X_i)
    \left(\indic{\scorerv_i > \<\what{\theta}, \phi(X_i)\>}
    - \alpha\right)} \\
  & =
  \sum_{i \in \mc{I}_+ \cup \mc{I}_-}
  \phi(X_i)
  \left(\indic{\scorerv_i > \<\what{\theta}, \phi(X_i)\>}
  - \alpha\right)
  + \sum_{i \in \mc{I}_0}
  \phi(X_i) \left(\indic{\scorerv_i > \<\what{\theta}, \phi(X_i)\>}
  - \alpha \right) \\
  & =
  - \sum_{i \in \mc{I}_+ \cup \mc{I}_-}
  \phi(X_i) g_i
  - \sum_{i \in \mc{I}_0} \phi(X_i) g_i
  - \sum_{i \in \mc{I}_0} \phi(X_i) \left(\alpha - g_i\right)
  = \sum_{i \in \mc{I}_0}
  \left(g_i - \alpha\right) \phi(X_i),
\end{align*}
where we used that $\sum_{i = 1}^n g_i \phi(X_i) = 0$ by construction.
Now, we leverage our assumption that $\scorerv_i$ are distinct with
probability 1.
We see immediately that $\card(\mc{I}_0) \le d$, because with
distinct values $\scorerv_i$ we may satisfy (at most) $d$ linear equalities,
and so
\begin{align*}
  \ltwobigg{\frac{1}{n} \sum_{i = 1}^n
    \phi(X_i) \left(
    \indic{\scorerv_i > \<\what{\theta}, \phi(X_i)\>} - \alpha
    \right)}
  \le \frac{\card(\mc{I}_0)}{n}
  \radphi
  \le \frac{d}{n} \radphi.
\end{align*}

Because Proposition~\ref{proposition:high-prob-deviation}
controls the fluctuations of the process $\theta
\mapsto \phi(x) \indic{\scoreval > \<\phi(x), \theta\>}$,
we obtain that with probability at least
$1 - e^{-nt^2}$,
\begin{align*}
  \lefteqn{\ltwobigg{\E\left[\phi(X)
        \indic{\scorerv > \<\what{\theta}, \phi(X)\>}\right]
      - \alpha \E[\phi(X)]}} \\
  & \le
  \ltwobigg{\frac{1}{n} \sum_{i = 1}^n
    \phi(X_i) \left(
    \indic{\scorerv_i > \<\what{\theta}, \phi(X_i)\>} - \alpha
    \right)}
  + 2 \radphi \sqrt{\frac{d}{n} \log \frac{n}{d}}
  + \frac{\radphi \alpha}{\sqrt{n}} + \radphi t \\
  & \le \radphi \frac{d}{n} + 3 \radphi \sqrt{\frac{d}{n} \log \frac{n}{d}}
  + \radphi t.
\end{align*}
This completes the proof of the theorem.




\section{Sharper and rate-optimal approximate conditional bounds}

The bounds Theorem~\ref{theorem:approximate-conditional-coverage} provides
do not reflect the sharper coverage possible in the purely marginal case
that Proposition~\ref{proposition:distinct-scores} provides.
By leveraging empirical process variants of the
Bernstein concentration inequalities we use to prove
Proposition~\ref{proposition:distinct-scores},
we can achieve sharper bounds on weighted coverage, where
the sharper bounds depend on the expectations and variances
of the linear functionals $x \mapsto \<u, \phi(x)\>$ themselves.
As a consequence of our results, in terms of achieving approximate
conditional coverage (i.e., weighted coverage as in
Definition~\ref{definition:weighted-coverage}),
the empirical estimator~\eqref{eqn:empirical-quantile-estimator}
is minimax rate optimal; we discuss this
after Theorem~\ref{theorem:two-sided-sharp}.

To state our results, assume that $\ball \subset \R^d$ is an arbitrary but
bounded set of vectors, and define
\begin{equation*}
  \radphi(u) \defeq \sup_{x \in \mc{X}} |\<u, \phi(x)\>|
  ~~ \mbox{and} ~~
  \radphi \defeq \sup_{u \in \ball} \radphi(u).
\end{equation*}
We can then extend Proposition~\ref{proposition:distinct-scores} to weighted
conditional coverage (Def.~\ref{definition:weighted-coverage}), conditional
on the sample.
We defer their proofs, presenting the building blocks
common to both in Section~\ref{sec:proof-sharp-building-blocks},
then specializing in Sections~\ref{sec:proof-one-sided-sharp}
and~\ref{sec:proof-two-sided-sharp}, respectively.

\begin{theorem}
  \label{theorem:one-sided-sharp}
  Let $K_n = 1 + \log_2 n$.
  Then there exists a numerical constant $c < \infty$ such that
  for all $t \ge 0$,
  with probability at least $1 - 2 K_n e^{-t} - e^{-d \log n - t}$,
  \begin{align*}
    \lefteqn{\E\left[\<u, \phi(X_{n+1})\>
      \left(\indics{Y_{n + 1} \not \in \what{C}(X_{n+1})}
      - \alpha\right)
      \mid P_n \right]} \\
    & \le c \left[\sqrt{\radphi(u) \alpha \cdot \E[\<u, \phi(X)\>]}
      \sqrt{\frac{d \log n + t}{n}}
      + \radphi \frac{d \log n + t}{n}\right]
  \end{align*}
  simultaneously for all $u \in \ball$ such that
  $\<u, \phi(x)\> \ge 0$ for all $x$.
  If additionally the scores $\scorerv_i$ are distinct
  with probability 1, then with the same probability,
  \begin{align*}
    \lefteqn{\E\left[\<u, \phi(X_{n+1})\>
      \left(\indics{Y_{n + 1} \not \in \what{C}(X_{n+1})}
      - \alpha\right)
      \mid P_n \right]} \\
    & \ge - c \left[\sqrt{\radphi(u) \alpha \cdot \E[\<u, \phi(X)\>]}
      \sqrt{\frac{d \log n + t}{n}}
      + \radphi \frac{d \log n + t}{n}\right]
  \end{align*}
  simultaneously for all $u \in \ball$ such that
  $\<u, \phi(x)\> \ge 0$ for all $x$.
\end{theorem}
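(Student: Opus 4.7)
The plan is to follow the same skeleton as the proof of Theorem~\ref{theorem:approximate-conditional-coverage}---convert the weighted-coverage error into a supremum of an empirical process using first-order optimality of $\what{\theta}$---but to replace the symmetrization/Dudley bound on that process by a variance-sensitive (Talagrand/Bousquet) concentration inequality and to peel over the scale of $\E[\<u,\phi(X)\>]$ so that the resulting rate genuinely scales with this quantity. First I would re-use the optimality identity from Section~\ref{sec:proof-approximate-conditional-coverage}: for subgradients $g_i \in \partial \loss_\alpha(\<\what{\theta},\phi(X_i)\> - \scorerv_i)$ one has $\sum_i g_i \phi(X_i) = 0$, and hence for any $u$,
\begin{equation*}
  \frac{1}{n}\sum_{i=1}^n \<u,\phi(X_i)\>\left(\indic{\scorerv_i > \<\what{\theta},\phi(X_i)\>} - \alpha\right)
  = \frac{1}{n}\sum_{i \in \mc{I}_0} \<u,\phi(X_i)\>(g_i - \alpha),
\end{equation*}
which, since $g_i - \alpha \le 0$ on $\mc{I}_0$, is nonpositive whenever $\<u,\phi(\cdot)\> \ge 0$, and---when the scores are distinct, so that $|\mc{I}_0| \le d$---is bounded in absolute value by $\radphi d/n$. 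What remains is to uniformly control
\begin{equation*}
  \<u, Z_n(\what{h}) - \E Z_n(\what{h})\>,\qquad Z_n(h) = \frac{1}{n}\sum_{i=1}^n \phi(X_i)\indic{\scorerv_i > h(X_i)},
\end{equation*}
over $u \in \ball$ and $h \in \mc{H} = \{\<\theta,\phi(\cdot)\>\}_{\theta \in \R^d}$, a class of VC-dimension $d$.

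For a fixed $u \in \ball$ with $\<u,\phi(\cdot)\> \ge 0$, the induced class $\mc{F}_u = \{(x,s)\mapsto \<u,\phi(x)\>\indic{s > h(x)}\}_{h \in \mc{H}}$ is VC-subgraph of index $O(d)$ by~\cite[Lemma~2.6.18]{VanDerVaartWe96}, has envelope $\radphi(u)$, and---using $\<u,\phi\>\ge 0$ and $\<u,\phi(X)\> \le \radphi(u)$---satisfies the variance control $\sup_h \E[f_{u,h}^2] \le \radphi(u)\sup_h \E[\<u,\phi(X)\>\indic{\scorerv > h(X)}]$. I would then apply Talagrand's inequality in Bousquet's form, together with the standard VC entropy integral bound on $\E\sup_h|\cdot|$, to obtain at a single $u$ and level $v \ge \sup_h \E[\<u,\phi(X)\>\indic{\scorerv > h(X)}]$ a Bernstein-type deviation of the form $C\big(\sqrt{\radphi(u)\, v\, (d\log n + t)/n} + \radphi(u)(d\log n + t)/n\big)$. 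The target rate follows by localizing $v$ at $h = \what{h}$ around $\alpha \E[\<u,\phi(X)\>]$, which is available by bootstrapping from Theorem~\ref{theorem:approximate-conditional-coverage}: that theorem already certifies, at the crude Hoeffding rate, that $\E[\<u,\phi(X)\>\indic{\scorerv > \what{h}(X)}]$ is within $o(1)$ of $\alpha\E[\<u,\phi(X)\>]$, so on the event of interest we may take $v \asymp \alpha \E[\<u,\phi(X)\>]$.

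To uniformize over $u$ without inflating the variance term by a $\sqrt{\log n}$ factor, I peel over the dyadic layers $\mc{B}_k = \{u \in \ball : 2^{-k-1}\radphi < \E[\<u,\phi(X)\>] \le 2^{-k}\radphi\}$ for $k = 0,1,\ldots,K_n$; below scale $1/n$ the linear term $\radphi(d\log n + t)/n$ already dominates, which is exactly why $K_n = 1 + \log_2 n$ layers suffice. Within each layer the variance level $v$ is pinned up to a factor of two, so the single-$u$ Bernstein bound extends to $\sup_{u \in \mc{B}_k}$ by the duality $\sup_u \<u,\cdot\>$ against the single vector-valued deviation $Z_n(\what{h}) - \E Z_n(\what{h})$, whose $\ell_2$ tail at the VC-indexed rate $\sqrt{d\log n/n}$ contributes the $e^{-d\log n - t}$ summand; a union bound over the $K_n$ layers and over the two sides of the inequality produces the $2K_n e^{-t}$ summand. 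The main obstacle is precisely this self-referential variance identification: Talagrand's inequality by itself knows only the worst-case variance $\sup_h \E[f_{u,h}^2]$, so pinning the leading constant at $\alpha\E[\<u,\phi(X)\>]$ rather than at the much larger $\E[\<u,\phi(X)\>]$ requires either the two-stage bootstrap from Theorem~\ref{theorem:approximate-conditional-coverage} or a joint peeling in $(u,h)$. Executing this cleanly enough that both the one-sided upper bound (for arbitrary scores) and the matching lower bound (which needs the $|\mc{I}_0| \le d$ boundary control, and hence distinct scores) hold simultaneously is the bulk of the work.
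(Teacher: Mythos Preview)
Your ingredients—Talagrand/Bousquet concentration, VC control of the product class, peeling, and the first-order optimality identity—are the right ones, and your treatment of the boundary set $\mc{I}_0$ matches the paper's. The gap is in how you achieve uniformity in $u$ at the Bernstein rate. Your plan peels over layers $\mc{B}_k$ of $\E[\<u,\phi(X)\>]$ and then, within a layer, invokes the duality $\sup_{u \in \mc{B}_k} \<u, Z_n(\what{h}) - \E Z_n(\what{h})\>$ against the vector deviation. But $\sup_{u \in \ball_2}\<u, v\> = \ltwo{v}$ is a single number that does not depend on $k$, so the peeling in $u$ accomplishes nothing here; bounding that norm uniformly in $h$ is exactly Lemma~\ref{lemma:bound-deviation-ltwo}, the Hoeffding-rate object you are trying to improve. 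The bootstrap from Theorem~\ref{theorem:approximate-conditional-coverage} can localize $h$, but if you then apply Talagrand to $\{f_{u,h} : u \in \mc{B}_k,\, h \in \mc{H}^\star\}$ you are back to needing a bound on $\E\sup_{u,h}|(P_n - P)f_{u,h}|$ that adapts to the class variance—which itself requires a variance peeling of the sort you list only as a fallback.

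The paper's argument \emph{is} that fallback: it peels over $Pf^2$-shells of the joint class $\mc{F} = \{f_{u,h}\}_{u \in \ball,\, h \in \mc{H}}$ (Lemma~\ref{lemma:peel-your-potatoes}) to obtain the self-normalized bound $|(P_n - P)f| \le c\big[v(f)\sqrt{(d\log n + t)/n} + \radphi\,(d\log n + t)/n\big]$ simultaneously for all $f$, where $v^2(f) = Pf^2$; the $K_n$ shells are variance shells, not $u$-shells, and this delivers the $(u,h)$-adaptive rate without separating the two coordinates. The $\alpha$-localization then needs no bootstrap: your own optimality inequality $P_n\<u,\phi\>\indic{\scorerv > \what{h}(X)} \le \alpha\, P_n\<u,\phi\>$ already gives the empirical second-moment bound $P_n f_{u,\what{h}}^2 \le \radphi(u)\,\alpha\, P_n\<u,\phi\>$, and a one-sided lower isometry $P_n f^2 \ge \tfrac12 Pf^2$ for all $f$ with $Pf^2 \gtrsim \radphi^2 (d\log n)/n$ (Lemma~\ref{lemma:specialize-no-concentration}, which supplies the $e^{-d\log n - t}$ probability budget) transfers this to the population variance $v^2(u,\what{h})$. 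Substituting into the self-normalized bound and controlling $P_n\<u,\phi\>$ by a second application of Lemma~\ref{lemma:peel-your-potatoes} (whence the factor $2$ in $2K_n e^{-t}$) yields the theorem.
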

\noindent
Simplifying the statement and ignoring higher-order terms, we can obtain a
guarantee for weighted coverage~\eqref{eqn:cherian-gibbs-guarantee}: for the
class $\mc{W} = \{w(x) = \<u, \phi(x)\>\}_{u \in \R^d}$, with probability $1
- e^{-t}$,
\begin{equation*}
  \P_w(Y_{n + 1} \in \what{C}_{n + 1})
  \ge 1 - \alpha - O(1) \left[\sqrt{\frac{\alpha}{\E[w(X)]}}
    \cdot \sqrt{\frac{d \log n + t}{n}}\right]
\end{equation*}
simultaneously for $w \ge 0$
with the normalization that $w(x) = \<u, \phi(x)\>$ for
a $u$ satisfying $\radphi(u) = 1$.

Applying the theorem to group indicators, meaning
that we have a collection of groups $\mc{G} \subset 2^{\mc{X}}$, and
the feature mapping $\phi(x) = (\indic{x \in G})_{G \in \mc{G}}$,
we have the following corollary.
\begin{corollary}
  Assume that $\phi(x) = (\indic{x \in G})_{G \in \mc{G}}$,
  and let $d = \card(\mc{G})$.
  Then with probability at least
  $1 - 3 \log_2 n e^{-t}$,
  \begin{equation*}
    \P(Y_{n + 1} \not \in \what{C}(X_{n + 1}) \mid X_{n + 1} \in G,
    P_n)
    \le \alpha +
    c \left[\sqrt{\frac{\alpha}{\P(X_{n + 1} \in G)}
        \frac{d \log n + t}{n}} +
      \frac{d \log n + t}{\P(X_{n + 1} \in G) \cdot n} \right]
  \end{equation*}
  simultaneously for all $G \in \mc{G}$.
\end{corollary}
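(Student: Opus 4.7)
The corollary is essentially a specialization of Theorem~\ref{theorem:one-sided-sharp}, so the plan is to identify the right parameters in the theorem and translate the expectation bound into a conditional probability bound.

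First I would set $\ball = \{e_G : G \in \mc{G}\} \subset \R^d$, the set of standard basis vectors indexed by the groups, and apply Theorem~\ref{theorem:one-sided-sharp} with this choice. For any fixed $G \in \mc{G}$, taking $u = e_G$ gives $\<u, \phi(x)\> = \indic{x \in G} \ge 0$, so the one-sided hypothesis of the theorem is satisfied. The key quantities simplify as $\radphi(u) = \sup_x \indic{x \in G} = 1$, $\E[\<u, \phi(X)\>] = \P(X_{n+1} \in G)$, and (since we only range $u$ over the standard basis) $\radphi = 1$.

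Next I would rewrite the expectation on the left-hand side of the theorem's conclusion using the tower property:
\begin{equation*}
  \E\left[\indic{X_{n+1} \in G}\bigl(\indic{Y_{n+1} \notin \what{C}(X_{n+1})} - \alpha\bigr) \mid P_n\right]
  = \P(X_{n+1} \in G) \cdot \Bigl(\P(Y_{n+1} \notin \what{C}(X_{n+1}) \mid X_{n+1} \in G, P_n) - \alpha\Bigr).
\end{equation*}
Dividing through by $\P(X_{n+1} \in G)$ and plugging the simplified constants into the bound of Theorem~\ref{theorem:one-sided-sharp} yields, for each $G$,
\begin{equation*}
  \P(Y_{n+1} \notin \what{C}(X_{n+1}) \mid X_{n+1} \in G, P_n) - \alpha
  \le c\left[\sqrt{\frac{\alpha}{\P(X_{n+1} \in G)} \cdot \frac{d \log n + t}{n}}
  + \frac{d \log n + t}{\P(X_{n+1} \in G) \cdot n}\right],
\end{equation*}
which matches the corollary's conclusion.

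Finally I would reconcile the failure probabilities. Theorem~\ref{theorem:one-sided-sharp} gives the bound simultaneously over all $u \in \ball$ (hence all $G \in \mc{G}$) with probability at least $1 - 2K_n e^{-t} - e^{-d\log n - t}$, where $K_n = 1 + \log_2 n$. Since $2(1 + \log_2 n) + n^{-d} \le 3 \log_2 n$ for $n \ge 2$, the stated bound of $1 - 3 \log_2 n \cdot e^{-t}$ follows. There is no real obstacle here beyond routine bookkeeping; the substantive work has already been absorbed into Theorem~\ref{theorem:one-sided-sharp}, and what remains is just recognizing group indicators as a special case of the linear functionals $\<u, \phi(\cdot)\>$ and converting weighted expectations into conditional probabilities by dividing by the group probability.
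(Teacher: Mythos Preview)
Your proposal is correct and matches the paper's approach exactly: the paper's entire proof is the one-line remark ``The result follows immediately upon considering the standard basis vectors $u = e_i$,'' and you have filled in precisely the details behind that remark. The only nitpick is that your inequality $2(1+\log_2 n) + n^{-d} \le 3\log_2 n$ actually requires $n$ a bit larger than $2$ (e.g.\ $n\ge 8$ suffices), but this is harmless bookkeeping given the implicit large-$n$ regime of the theorem.
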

\noindent
The result follows immediately upon considering the standard basis
vectors $u = e_i$.
Comparing this to Corollary~\ref{corollary:group-conditional-baby},
we see a much sharper deviation guarantee.

When the scores $\scorerv = \scoreval(X, Y)$ are distinct with probability
1, we achieve two sided bounds extending
Theorem~\ref{theorem:one-sided-sharp}, as in
Proposition~\ref{proposition:distinct-scores}.
The next theorem provides an exemplar result.

\begin{theorem}
  \label{theorem:two-sided-sharp}
  Let the conditions of Theorem~\ref{theorem:one-sided-sharp} hold,
  except assume that $\scorerv_i$ are distinct with probability 1,
  and that the mapping $\phi(x)$ includes a constant bias
  term $\phi_1(x) = 1$.
  Then there exists a numerical
  constant $c < \infty$ such that for all
  $t \ge 0$, with probability at least
  $1 - 2 K_n e^{-t} - e^{-d \log n -t}$,
  simultaneously for all $u \in \ball$,
  \begin{align*}
    \left|\E\left[\<u, \phi(X)\> \left(\indics{Y_{n + 1}
          \not \in \what{C}(X_{n + 1})} - \alpha\right)
        \mid P_n \right]\right|
    & \le c \left[\radphi(u) \sqrt{\alpha}
      \sqrt{\frac{d \log n + t}{n}}
      + \radphi\frac{d \log n + t}{n} \right].
  \end{align*}
\end{theorem}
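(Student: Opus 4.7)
The plan is to reduce to Theorem~\ref{theorem:one-sided-sharp} via a bias-shift trick that exploits the constant coordinate $\phi_1 \equiv 1$. Given any $u \in \ball$, define the shifted vector
\begin{equation*}
  \tilde u \defeq u + \radphi(u)\, e_1,
\end{equation*}
so that $\<\tilde u, \phi(x)\> = \<u, \phi(x)\> + \radphi(u) \in [0, 2\radphi(u)]$ for every $x \in \mc{X}$; hence $\tilde u$ satisfies the nonnegativity hypothesis of Theorem~\ref{theorem:one-sided-sharp}. Moreover $\radphi(\tilde u) \le 2\radphi(u)$ and $\E[\<\tilde u, \phi(X)\>] \le 2\radphi(u)$, which together give $\sqrt{\radphi(\tilde u)\, \alpha\, \E[\<\tilde u, \phi(X)\>]} \le 2\radphi(u)\sqrt{\alpha}$. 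The distinct-scores (two-sided) conclusion of Theorem~\ref{theorem:one-sided-sharp} applied to $\tilde u$ therefore yields, on the stated high-probability event,
\begin{equation*}
  \bigl| \E[\<\tilde u, \phi(X_{n+1})\>(\indics{Y_{n+1} \not\in \what{C}(X_{n+1})} - \alpha) \mid P_n]\bigr|
  \le c\Bigl(\radphi(u)\sqrt{\alpha}\sqrt{(d\log n + t)/n} + \radphi\, (d\log n + t)/n\Bigr).
\end{equation*}

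Next I would recover the bound for $u$ itself by linearly decomposing
\begin{equation*}
  \E[\<\tilde u, \phi(X_{n+1})\>\Delta \mid P_n]
  = \E[\<u, \phi(X_{n+1})\>\Delta \mid P_n]
  + \radphi(u)\, \E[\Delta \mid P_n],
\end{equation*}
where $\Delta \defeq \indics{Y_{n+1} \not\in \what{C}(X_{n+1})} - \alpha$. By the triangle inequality, controlling $|\E[\<u, \phi(X_{n+1})\>\Delta \mid P_n]|$ then reduces to controlling the scalar marginal deviation $|\E[\Delta \mid P_n]|$. For the latter I would invoke Theorem~\ref{theorem:one-sided-sharp} with the vector $e_1$, which is nonnegative and satisfies $\radphi(e_1) = \E[\<e_1, \phi(X)\>] = 1$, giving $|\E[\Delta \mid P_n]| \le c(\sqrt{\alpha}\sqrt{(d\log n + t)/n} + (d\log n + t)/n)$. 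Multiplying this by $\radphi(u) \le \radphi$ and combining with the $\tilde u$-bound produces the stated two-sided estimate, up to a numerical constant.

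The main bookkeeping obstacle is ensuring that the high-probability conclusion of Theorem~\ref{theorem:one-sided-sharp} holds \emph{simultaneously} for the shifted family $\{\tilde u : u \in \ball\}$ together with the single vector $e_1$. Because the shifted set lies in a bounded region of $\radphi$-radius at most $2\radphi$, and because the class $\{x \mapsto \<\theta, \phi(x)\> : \theta \in \R^d\}$ has VC dimension $d$ independently of bias shifts, the same uniform Bernstein-type concentration underlying Theorem~\ref{theorem:one-sided-sharp} applies at the same probability level up to an absorbed factor of $2$ in $c$; the extra failure event for $e_1$ contributes one more term in the union bound, absorbed into the existing $2K_n e^{-t} + e^{-d\log n - t}$ budget.
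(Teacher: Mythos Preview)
Your reduction to Theorem~\ref{theorem:one-sided-sharp} via the bias shift $\tilde u = u + \radphi(u)\,e_1$ is valid and is a genuinely different route from the paper's. The paper does not invoke Theorem~\ref{theorem:one-sided-sharp} at all; instead it returns to the raw building blocks of Section~\ref{sec:proof-sharp-building-blocks}. Concretely, the paper uses the bias coordinate $\phi_1\equiv 1$ together with distinct scores to obtain $|P_n(\scorerv > \what h(X)) - \alpha| \le d/n$, whence the \emph{empirical} second moment satisfies $P_n\<u,\phi(X)\>^2\indic{\scorerv>\what h(X)} \le \radphi(u)^2(\alpha + d/n)$. Feeding this variance-proxy bound through Lemma~\ref{lemma:specialize-no-concentration} and inequality~\eqref{eqn:bart-ride} gives the localized Bernstein-type control with leading term $\radphi(u)\sqrt{\alpha}$ directly, for signed $u$, on a single high-probability event. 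Your argument is more modular---it packages Theorem~\ref{theorem:two-sided-sharp} as essentially a corollary of Theorem~\ref{theorem:one-sided-sharp}---while the paper's argument is more self-contained and avoids invoking the earlier theorem twice.

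One bookkeeping point deserves tightening. To get the $e_1$ estimate with second-order term $(d\log n + t)/n$ rather than $\radphi\cdot(d\log n + t)/n$ (which you need so that after multiplying by $\radphi(u)$ you land on $\radphi$ and not $\radphi^2$), you must apply Theorem~\ref{theorem:one-sided-sharp} with the singleton ball $\{e_1\}$, whose envelope is $1$. That is a \emph{separate} invocation from the one for $\{\tilde u : u\in\ball\}$, so the union bound genuinely doubles the failure probability to $4K_n e^{-t} + 2e^{-d\log n - t}$; it is not literally ``absorbed into the existing budget.'' The fix is standard---replace $t$ by $t+\log 2$ throughout, which restores the stated probability and only inflates the constant $c$---but you should say so explicitly rather than wave it away.
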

\noindent
The conclusion is weaker than that of Theorem~\ref{theorem:one-sided-sharp},
as it replaces $\sqrt{\radphi(u) \E[\<u, \phi(X)\>]}$ with $\radphi(u)$.

These convergence guarantees are sharp to within logarithmic factors,
and appear to capture the correct dependence on $\alpha$ and the
weight functions $\mc{W}$.
Indeed, assume that the estimated
confidence set $\what{C}_n$ takes the form
$\what{C}_n(x) = \{y \mid \scoreval(x, y) \le \what{h}(x)\}$ or
$\what{C}_n(x) = \{y \mid \what{a}(x) \le \scoreval(x, y)
\le \what{b}(x)\}$ for \emph{some} estimated functions
$\what{h}$, $\what{a}$, or $\what{b}$.
\citet{ArecesChDuKu24} show that for \emph{any} class of functions $\mc{W}$
mapping $\mc{X}$ to $\{\pm 1\}$ with VC-dimension $d$, there exists a
sampling distribution $P$ for which $\scorerv \mid X$ has a continuous
density and such that with constant probability over the draw of $P_n$,
\begin{equation*}
  \left|\E\left[w(X_{n + 1}) \indic{Y_{n + 1} \not \in \what{C}_n(X_{n+1})}
    - w(X_{n + 1}) \alpha  \mid P_n \right]\right|
  \ge c \sqrt{\frac{d \alpha(1 - \alpha)}{n}},
\end{equation*}
where $c > 0$ is a universal constant.
To compare this with Theorems~\ref{theorem:one-sided-sharp}
and~\ref{theorem:two-sided-sharp}, let $\{G_1, \ldots, G_d\}$, $G_j \subset
\mc{X}$, be a partition of $\mc{X}$ into $d$ groups, and define the group
feature mapping $\phi(x) = [\indic{x \in G_j}]_{j = 1}^d$.
Then the class of linear functionals $\mc{W} = \{w \mid w(x) = \<u,
\phi(x)\>\}_{u \in \R^d}$ has VC-dimension $d$, as does its restriction
$\mc{W}_1 = \{w \mid w(x) = \<u, \phi(x)\>\}_{u \in \{\pm 1\}^d}$, where $w
\in \mc{W}_1$ satisfies $w(x) \in \{\pm 1\}$.
Theorems~\ref{theorem:one-sided-sharp} and~\ref{theorem:two-sided-sharp},
conversely, demonstrate that for
$\ball_1 = \{u \mid \lone{u} \le 1\}$,
we have
\begin{equation*}
  \left|\E\left[
      w(X_{n + 1})
      \left(\indic{Y_{n + 1} \not \in \what{C}_n(X_{n + 1})}
      - \alpha\right)
      \mid P_n\right]\right|
  \le c \sqrt{\frac{\alpha(1 - \alpha) }{n}}
  \cdot \sqrt{d \log n + t}
\end{equation*}
with probability at least $1 - e^{-t}$ simultaneously for all $w(x) = \<u,
\phi(x)\>$ for some $u \in \ball_1$, as long as $\alpha < \half$ (where we
use $1 - \alpha \ge \half$ and wrap constants together for a cleaner
statement).



\subsection{Proof of Theorems~\ref{theorem:one-sided-sharp}
  and~\ref{theorem:two-sided-sharp}: building blocks}
\label{sec:proof-sharp-building-blocks}

Our proof leverages a combination of Talagrand's concentration inequalities
for empirical processes, a VC-dimension calculation, and localized
Rademacher complexities~\cite{BartlettBoMe05, Koltchinskii06a}.
We begin with the form of Talagrand's empirical
process inequality with constants due to \citet{Bousquet02thesis}.

\begin{lemma}[Talagrand's empirical process inequality]
  \label{lemma:talagrand}
  Let $\mc{F}$ be a countable class of functions with
  $Pf = 0$ and $\linf{f} \le b$ for $f \in \mc{F}$. Let $Z = \sup_{f \in \mc{F}}
  P_n f$ and $\sigma^2 = \sigma^2(\mc{F}) = \sup_{f \in \mc{F}} Pf^2$.
  Define $v^2 \defeq \sigma^2 + 2b \E[Z]$.
  Then for $t \ge 0$,
  \begin{equation*}
    \P\left(Z \ge \E[Z] + \sqrt{2 v^2 t} + b\frac{t}{3}\right)
    \le e^{-nt}.
  \end{equation*}
\end{lemma}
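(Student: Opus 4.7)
The plan is to follow the entropy method of Ledoux, sharpened by \citet{Bousquet02thesis}, to obtain a sub-Bernstein bound on the moment generating function of $Z-\E Z$ and then invert it via Cram\'er--Chernoff. A first reduction is to work with the unnormalized supremum $S_n = nZ = \sup_{f\in\mc{F}} \sum_{i=1}^n f(X_i)$ and the sum-variance proxy $v_\star^2 \defeq n\sigma^2 + 2b\,\E[S_n] = n v^2$; the target statement becomes $\P(S_n \ge \E[S_n] + \sqrt{2 v_\star^2 s} + bs/3) \le e^{-s}$, and taking $s = nt$ and dividing by $n$ recovers the form displayed in the lemma.

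To bound $\psi(\lambda) = \log\E[e^{\lambda(S_n-\E S_n)}]$, I would invoke the tensorization inequality for entropy on product spaces, $\mathrm{Ent}(e^{\lambda S_n}) \le \sum_{i=1}^n \E[\mathrm{Ent}_i(e^{\lambda S_n})]$. Using that $S_n$ is a supremum, replacing $X_i$ by an independent copy $X_i'$ gives $S_n - S_n^{(i)} \le f^\star(X_i) - f^\star(X_i')$ for the (random) maximizer $f^\star$; applying a Bobkov--Ledoux-type modified log-Sobolev bound converts each conditional entropy into a weighted second moment involving $f^\star(X_i)^2$.

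The decisive step is to express this moment linearly in $S_n-\E[S_n]$. Using $|f^\star|\le b$, $Pf^{\star 2}\le\sigma^2$, and a symmetrization argument that replaces $f^\star(X_i)^2$ by a centered version of itself, one obtains
\begin{equation*}
  \sum_{i=1}^n f^\star(X_i)^2 \;\le\; n\sigma^2 + 2b\,\E[S_n] + 2b\bigl(S_n - \E[S_n]\bigr) \;=\; v_\star^2 + 2b\bigl(S_n-\E[S_n]\bigr).
\end{equation*}
Substituting into the entropy bound produces a differential inequality of Bennett type for $\psi$, and Herbst's argument integrates it to give $\psi(\lambda) \le (v_\star^2/b^2)\bigl(e^{b\lambda}-1-b\lambda\bigr)$ for $\lambda\in[0,1/b)$.

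Finally, I would apply Cram\'er--Chernoff by minimizing $\lambda s - \psi(\lambda)$ and using the Bennett-to-Bernstein conversion $(1+u)\log(1+u) - u \ge u^2/(2+2u/3)$ to extract a tail bound with leading term $\sqrt{2 v_\star^2 s}$ and linear correction $bs/3$; rescaling $s = nt$ then reconstructs the lemma verbatim. The hard part is keeping the constants sharp --- specifically the ``$2$'' in front of $b\,\E[S_n]$ in $v_\star^2$ and the ``$1/3$'' in the Bernstein remainder --- which requires Bousquet's careful tuning of the entropy/log-Sobolev step rather than the looser Massart version; a small annoyance is reducing to countable $\mc{F}$ (handled by a standard separability/lifting argument) so that the suprema involved are measurable.
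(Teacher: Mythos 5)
The paper never proves this lemma: it is imported verbatim, with its constants, from Bousquet's thesis, so there is no internal argument to compare yours against. The question is therefore only whether your sketch would actually deliver the stated bound. Your road map --- reduce to the unnormalized supremum, tensorize entropy, apply a modified log-Sobolev/Herbst argument to get a Bennett-type bound on the log-MGF, then invert via Cram\'er--Chernoff with the Bennett-to-Bernstein conversion, handling measurability by countability --- is indeed the standard route, and your reduction $v_\star^2 = n v^2$, $s = nt$ is correct.

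The gap is in the one concrete step you display, which is exactly the step that would have to produce the sharp constant. The claimed inequality $\sum_{i=1}^n f^\star(X_i)^2 \le v_\star^2 + 2b(S_n - \E[S_n]) = n\sigma^2 + 2b S_n$ is false pointwise: take $\mc{F} = \{f\}$ a single function with $Pf = 0$ and $f \in \{\pm b\}$ with equal probability; on the event that $f(X_i) = -b$ for all $i$, the left side equals $n b^2$ while the right side equals $n\sigma^2 + 2bS_n = nb^2 - 2nb^2 = -nb^2$. The underlying issue is that $|f^\star| \le b$ only gives $\sum_i f^\star(X_i)^2 \le b \sum_i |f^\star(X_i)|$, and the absolute values cannot be traded for $S_n$; a symmetrization/contraction fix costs extra constant factors, which is precisely why the earlier entropy-method proofs (Massart, Rio) obtain inflated variance proxies. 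Bousquet's proof of the constant $2b\,\E[Z]$ and the $t/3$ remainder does not bound $\sum_i f^\star(X_i)^2$ at all: it works with the leave-one-out suprema $Z_k = \sup_f \sum_{i \ne k} f(X_i)$ and their maximizers $\hat{f}_k$, exploiting $\hat{f}_k(X_k) \le Z - Z_k \le \hat{f}(X_k)$, the independence of $\hat{f}_k$ from $X_k$ (so its conditional second moment is at most $\sigma^2$), and $\sum_k (Z - Z_k) \le Z$, inside a specially tailored entropy inequality. So as written the proposal would at best yield a Talagrand-type inequality with worse constants --- which would still suffice for the paper's downstream localization arguments up to constants, but does not establish the lemma as stated; the part you defer to ``Bousquet's careful tuning'' is the entire content of the result.
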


Because we will consider functions of the form $f(x, \scoreval) = \<u,
\phi(x)\> \indic{\<\theta, \phi(x)\> > \scoreval}$, we also require
some control over the complexity of such rank-one-like products.
\begin{lemma}
  \label{lemma:product-vc}
  Let $\mc{H}$ and $\mc{G}$ be classes of functions with
  VC-dimensions $d_1$ and $d_2$, respectively. Then
  the classes of functions
  \begin{equation*}
    \mc{F} \defeq \{f \mid f(x) = g(x) \indic{h(x) > 0}\}
    ~~ \mbox{and} ~~
    \mc{F}_+ \defeq \{f \mid f(x) = g(x) \indic{h(x) > 0} - c g(x)\}
  \end{equation*}
  where $c$ is a constant
  have VC-dimension $O(1)(d_1 + d_2)$.
\end{lemma}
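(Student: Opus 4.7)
}
The plan is to control the VC-subgraph dimension of $\mc{F}$ and $\mc{F}_+$ by rewriting each subgraph as a fixed Boolean combination of a constant number of sets drawn from VC classes whose dimensions are bounded linearly in $d_1$ and $d_2$, and then invoking a standard Boolean-combination lemma such as~\cite[Lemma 2.6.17]{VanDerVaartWe96}.

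For $f \in \mc{F}$ with $f(x) = g(x)\indic{h(x) > 0}$, I would split the subgraph according to the sign of $h(x)$:
\begin{equation*}
  \{(x,t) : t < f(x)\} = \underbrace{\{(x,t) : h(x) > 0\} \cap \{(x,t) : t < g(x)\}}_{A(h) \,\cap\, B(g)}
  \;\cup\; \underbrace{\{(x,t) : h(x) \le 0\} \cap \{(x,t) : t < 0\}}_{A(h)^c \,\cap\, C_0},
\end{equation*}
where $C_0$ is a fixed half-plane. As $h$ ranges over $\mc{H}$, the positivity set $A(h)$ lies in a class of VC-dimension at most $d_2$ (it is the slice of the subgraph of $h$ at the level $t = 0$, so its shatter function is dominated by that of $\mc{H}$). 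As $g$ ranges over $\mc{G}$, $B(g)$ is literally the subgraph of $g$, so it lies in a class of VC-dimension $d_1$. Applying the Boolean-combination lemma---which gives VC-dimension at most a universal constant times $d_1 + d_2$ for any fixed finite Boolean expression in sets drawn from classes of VC-dimensions $d_1,d_2$ (and an arbitrary fixed set $C_0$)---yields the claim for $\mc{F}$.

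For $\mc{F}_+$, with $f(x) = g(x)\indic{h(x) > 0} - c\,g(x)$, I would again split on the sign of $h(x)$:
\begin{equation*}
  \{(x,t) : t < f(x)\} = \{h(x) > 0,\, t < (1-c)g(x)\} \;\cup\; \{h(x) \le 0,\, t < -c\, g(x)\}.
\end{equation*}
For any fixed scalar $\lambda$, the class $\{(x,t) : t < \lambda g(x)\}_{g \in \mc{G}}$ has VC-dimension at most $d_1$: if $\lambda > 0$ this is the preimage of the subgraph class under the affine map $(x,t) \mapsto (x, t/\lambda)$; if $\lambda < 0$ it is the complement of the subgraph class under $(x,t) \mapsto (x, -t/|\lambda|)$; and if $\lambda = 0$ it is a single fixed set. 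Thus both pieces of the subgraph are intersections of $A(h)$ (or its complement) with a member of a class of VC-dimension $O(d_1)$, and the Boolean-combination lemma again gives total VC-dimension $O(d_1+d_2)$.

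I do not expect any substantive obstacle: the only care required is in verifying the boundary cases ($c=0$ or $c=1$) in the $\mc{F}_+$ argument and in using a version of the Boolean-combination bound (e.g., the one based on iterating the Sauer--Shelah inequality via shatter-function products) that yields a linear rather than an $O((d_1+d_2)\log(d_1+d_2))$ bound, which is what the stated $O(1)(d_1+d_2)$ conclusion requires.
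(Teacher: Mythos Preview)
Your approach is correct and rests on the same core mechanism as the paper: both reduce to a Sauer--Shelah growth-function product, observing that $(ne/d_1)^{d_1}(ne/d_2)^{d_2} < 2^n$ once $n \gtrsim d_1 + d_2$. The paper writes this out directly---it simply counts the distinct patterns $(\sign(g(x_i))\cdot\indic{h(x_i)>0})_{i=1}^n$ and bounds their number by the product of the two Sauer--Shelah polynomials, without invoking a packaged lemma---whereas you reach the same product bound by decomposing the subgraph into a fixed Boolean expression and citing~\cite[Lemma~2.6.17]{VanDerVaartWe96}. Your treatment is in fact more explicit about the subgraph sets $\{(x,t): t < f(x)\}$; the paper's written argument only bounds sign patterns at the zero level and leaves the passage to full subgraph dimension implicit. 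One cosmetic slip: you have interchanged $d_1$ and $d_2$ (the statement assigns VC-dimension $d_1$ to $\mc{H}$ and $d_2$ to $\mc{G}$), which of course does not affect the $O(d_1+d_2)$ conclusion.
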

\begin{proof}
  For a set of points $x_1, \ldots, x_n$, let $\mc{S}(x_1^n, \mc{H}) =
  \{\indic{h(x_i) > 0}\}_{i = 1}^n$ be the set of ``sign'' vectors that $h$
  realizes.
  By definition of the VC-dimension and the Sauer-Shelah lemma, this set has
  cardinality at most $\sum_{i = 0}^{d_1} \binom{n}{i} \le
  (\frac{ne}{d_1})^{d_1}$.
  Similarly, the set of signs
  \begin{equation*}
    \mc{S}(x_1^n, \mc{F})
    = \{\sign(g(x_i)) \cdot \indic{h(x_i) > 0}\}_{i = 1}^n
  \end{equation*}
  has cardinality at most $\sum_{i = 0}^{d_1} \binom{n}{i} \cdot \sum_{i =
    0}^{d_2} \binom{n}{i} \le (\frac{ne}{d_1})^{d_1}
  (\frac{ne}{d_2})^{d_2}$.
  If $n$ is large enough that
  \begin{equation*}
    \left(\frac{ne}{d_1}\right)^{d_1} \cdot \left(\frac{ne}{d_2}\right)^{d_2}
    < 2^n,
  \end{equation*}
  then certainly $\mc{F}$ cannot shatter $n$ points; this occurs
  once $n \ge c \cdot (d_1 + d_2)$ for some numerical constant $c$.
  For the second class the argument is similar.
\end{proof}

\newcommand{\rademacher}{\mathfrak{R}}

For the next lemma, our main technical building block for convergence, we
consider the class of functions $\mc{F}$ indexed by $u \in \ball$ and $h \in
\mc{H}$, where $\mc{H}$ is a class with VC-dimension at most $d$,
with
\begin{equation}
  \label{eqn:product-function-class}
  f(x, \scoreval) = f_{u,h}(x, \scoreval)
  \defeq \<u, \phi(x)\> \indic{\scoreval > h(x)}.
\end{equation}
Each of these functions evidently satisfies
$|f(x, \scoreval)| \le \radphi$.
The variance proxy
\begin{equation}
  \label{eqn:variance-proxy}
  v^2(u, h) \defeq v^2(f_{u,h}) = P f_{u, h}(X, \scorerv)^2
  = \E[\<\phi(X), u\>^2 \indic{\scorerv > h(X)}]
\end{equation}
and its empirical variant
\begin{equation*}
  v_n^2(u, h) = P_n f_{u,h}(X, \scorerv)^2
  = \frac{1}{n} \sum_{i = 1}^n \<\phi(X_i), u\>^2 \indic{\scorerv_i > h(X_i)}.
\end{equation*}
will allow us to bound deviations of $P_n f$ from $P f$ relative
to $v^2(f)$.

For later use, we
recall the \emph{empirical Rademacher complexity} of a function class
$\mc{F}$,
\begin{equation*}
  \rademacher_n(\mc{F})
  \defeq \frac{1}{n} \E\left[\sup_{f \in \mc{F}}
    \sum_{i = 1}^n \varepsilon_i f(X_i) \mid X_1^n\right],
\end{equation*}
where $\varepsilon_i \simiid \uniform\{\pm 1\}$ are random signs.
In some cases, we will require \emph{localized Rademacher
complexities}~\cite{BartlettBoMe05, Koltchinskii06a} around
a class $\mc{F}_r \defeq \{f \mid Pf^2 \le r^2\}$, which
contains functions of small variance, allowing us to ``relativize''
bounds.
\citet[Proof of Corollary 3.7]{BartlettBoMe05} demonstrate the following.
\begin{lemma}
  Let $\mc{F}$ be a star-convex collection of functions,
  meaning that $f \in \mc{F}$ implies $\lambda f \in \mc{F}$
  for $\lambda \in [0, 1]$, and assume that $\sup_x |f(x)| \le b$
  and $\mc{F}$ has VC-dimension $d$. Then
  \begin{equation}
    \label{eqn:rademacher-vc-control}
    \E\left[\rademacher_n(\mc{F}_r)\right]
    \le \frac{\radphi}{n} + 
    c r \sqrt{\frac{d}{n} \log\frac{\radphi}{r}}
    ~~ \mbox{if}~~
    r^2 > \radphi^2 \frac{d}{n} \log \frac{n}{d},
  \end{equation}
  where $c < \infty$ is a numerical constant.  
\end{lemma}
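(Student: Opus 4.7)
The plan is to follow the standard localization argument of \citet{BartlettBoMe05}, combining Dudley's chaining with a VC-based uniform entropy bound and a self-bounding step to control the $L_2(P_n)$-radius of $\mc{F}_r$.

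First, conditionally on the sample $X_1^n$, Dudley's entropy integral bound yields
\begin{equation*}
  \rademacher_n(\mc{F}_r) \le \frac{c}{\sqrt{n}} \int_0^{\what{\sigma}_n}
  \sqrt{\log N(\mc{F}_r, L_2(P_n), \epsilon)} \, d\epsilon,
\end{equation*}
where $\what{\sigma}_n \defeq \sup_{f \in \mc{F}_r} (P_n f^2)^{1/2}$ is the $L_2(P_n)$-radius of $\mc{F}_r$. Because $\mc{F}$ is a VC-subgraph class of dimension $d$ with envelope $\radphi$, Haussler's uniform-entropy bound gives $\log N(\mc{F}_r, L_2(P_n), \epsilon) \le c\, d \log(\radphi/\epsilon)$ for $\epsilon \le \radphi$; substituting and computing the integral in closed form then produces
\begin{equation*}
  \rademacher_n(\mc{F}_r) \le c \what{\sigma}_n
  \sqrt{\frac{d \log(\radphi/\what{\sigma}_n)}{n}}.
\end{equation*}

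Next I control $\what{\sigma}_n$ by $r$ in expectation. Star-convexity of $\mc{F}$ transfers to $\mc{F}_r$, since $Pf^2 \le r^2$ implies $P(\lambda f)^2 \le r^2$ for $\lambda \in [0,1]$; hence $\mc{F}_r$ is itself star-convex. Symmetrization, the quadratic contraction principle, and Talagrand's inequality (Lemma~\ref{lemma:talagrand}) applied to the centered square class $\{f^2 - Pf^2 : f \in \mc{F}_r\}$, which has envelope $\radphi^2$ and variance bounded by $\radphi^2 r^2$, give
\begin{equation*}
  \E[\what{\sigma}_n^2] \le r^2 + c \radphi \E[\rademacher_n(\mc{F}_r)]
  + O(\radphi^2/n).
\end{equation*}
Combining with the previous display produces the self-bounding inequality $\E[\what{\sigma}_n^2] \le r^2 + c' \radphi \E[\what{\sigma}_n] \sqrt{d \log(\radphi/r)/n}$, which solves, under the hypothesis $r^2 > \radphi^2 (d/n) \log(n/d)$, to give $\E[\what{\sigma}_n] \le c'' r$. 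Feeding this bound back into the chaining estimate yields $\E[\rademacher_n(\mc{F}_r)] \le c r \sqrt{(d/n) \log(\radphi/r)}$, and the residual $\radphi/n$ term in the statement is the envelope contribution carried through Talagrand's inequality.

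The main obstacle is the self-bounding step, namely solving the quadratic inequality in $\E[\what{\sigma}_n^2]$ against the fixed-point condition $r^2 > \radphi^2 (d/n) \log(n/d)$. That hypothesis is precisely what guarantees the variance contribution dominates the envelope term and forces the root of the quadratic to be $O(r)$ rather than a larger value governed by $\radphi$ alone; without it, the localization argument collapses and one recovers only the global Rademacher bound of order $\radphi \sqrt{d/n}$.
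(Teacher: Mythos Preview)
Your proposal is correct and follows essentially the same localization argument the paper invokes by citing \citet[Proof of Corollary 3.7]{BartlettBoMe05}: Dudley chaining with the VC/Haussler entropy bound, combined with a self-bounding step that controls the empirical $L_2(P_n)$-radius of $\mc{F}_r$ by $r$ under the fixed-point condition $r^2 \gtrsim \radphi^2 (d/n)\log(n/d)$. The only cosmetic difference is that the BBM argument (and the paper's own working, visible in supplementary material) phrases the radius control as a high-probability containment $\mc{F}_r \subset \{f : P_n f^2 \le 2r^2\}$ rather than your direct expectation bound on $\what{\sigma}_n^2$, but these are equivalent routes to the same conclusion.
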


We will combine the VC-bound in Lemma~\ref{lemma:product-vc},
the version of
Talagrand's empirical process inequality in Lemma~\ref{lemma:talagrand}, and
a localization argument on Rademacher complexities
via inequality~\eqref{eqn:rademacher-vc-control}
to prove the following lemma in Appendix~\ref{sec:proof-peel-your-potatoes}.
\begin{lemma}
  \label{lemma:peel-your-potatoes}
  Let $\mc{F}$ be the class of functions~\eqref{eqn:product-function-class}.
  Let $K_n = 1 + \log_2 n$. Then
  for all $t \ge 0$, with probability at least
  $1 - K_n e^{-t}$ over the draw of the sample $P_n$,
  \begin{equation*}
    |(P_n - P) f|
    \le c \left[v(f) \sqrt{\frac{d \log n + t}{n}}
    + \radphi \frac{d \log n + t}{n}\right]
  \end{equation*}
  simultaneously for all $f \in \mc{F}$, where $c < \infty$ is a numerical
  constant.
  In addition, with the same probability,
  \begin{equation*}
    \left|(P_n - P)\<u, \phi(X)\>\right|
    \le c \left[\sqrt{P \<u, \phi\>^2}
      \sqrt{\frac{d \log n + t}{n}}
      + \radphi \frac{d \log n + t}{n}
      \right]
  \end{equation*}
  simultaneously for all $u \in \ball$.
\end{lemma}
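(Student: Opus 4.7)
The plan is a variance-based peeling argument: partition $\mc{F}$ into $K_n = 1 + \log_2 n$ slices of geometrically decreasing variance, apply Talagrand's inequality (Lemma~\ref{lemma:talagrand}) uniformly within each slice, control the expected supremum there by symmetrization together with the localized Rademacher bound~\eqref{eqn:rademacher-vc-control}, and finally take a union bound across the slices.

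As a preliminary, bound the VC-subgraph dimension of $\mc{F}$. The class $\{(x, \scoreval) \mapsto \<u, \phi(x)\>\}_{u \in \ball}$ is a $d$-dimensional linear family with subgraph dimension at most $d$, and $\{(x, \scoreval) \mapsto \indic{\scoreval > h(x)}\}_{h \in \mc{H}}$ has VC-dimension at most $d + O(1)$ (by adjoining the coordinate $\scoreval$ to the class $\mc{H}$). Lemma~\ref{lemma:product-vc} then yields that $\mc{F}$ has VC-subgraph dimension $O(d)$. Passing to the star-hull of $\mc{F}$ around $0$ if necessary preserves this dimension up to a constant and only enlarges the supremum.

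Next, set $r_k = 2^{-k} \radphi$ and define $\mc{F}_k = \{f \in \mc{F} : v(f) \le r_k\}$ for $k = 0, 1, \ldots, K_n - 1$. Since $v(f) \le \radphi$ always, $\mc{F}_0 = \mc{F}$, while the smallest radius $r_{K_n - 1} \le \radphi/n$ contributes a term absorbed into the additive $\radphi(d \log n + t)/n$ slack. For each slice, symmetrization together with inequality~\eqref{eqn:rademacher-vc-control} yields
\begin{equation*}
\E\left[\sup_{f \in \mc{F}_k} (P_n - P) f \right] \le c_1 \left( r_k \sqrt{\frac{d \log n}{n}} + \frac{\radphi}{n} \right).
\end{equation*}
Feeding this expectation together with $|f| \le \radphi$ and variance proxy $v^2(f) \le r_k^2$ into Talagrand's inequality (Lemma~\ref{lemma:talagrand}) at failure probability $e^{-t}$ produces, with probability at least $1 - e^{-t}$,
\begin{equation*}
\sup_{f \in \mc{F}_k} |(P_n - P) f| \le c_2 \left( r_k \sqrt{\frac{d \log n + t}{n}} + \radphi \, \frac{d \log n + t}{n} \right).
\end{equation*}
Union-bounding over $k = 0, \ldots, K_n - 1$ gives the probability $1 - K_n e^{-t}$; each $f \in \mc{F}$ lies in some $\mc{F}_k$ with $r_k \le 2 \max\{v(f), \radphi/n\}$, which delivers the first claim. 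The second claim, for $\<u, \phi(X)\>$, follows by applying the identical peeling-plus-Talagrand machinery to the simpler linear class $\{x \mapsto \<u, \phi(x)\> : u \in \ball\}$, whose VC-subgraph dimension is at most $d$.

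Most of the work is bookkeeping: collecting the uniform Rademacher $r_k \sqrt{d \log n /n}$ term with the Talagrand $\sqrt{r_k^2 t/n}$ and $\radphi t /(3n)$ contributions into the single additive form of the statement, and absorbing the lowest peel into the $\radphi(d \log n + t)/n$ residual. The only genuinely technical step is bounding the VC-subgraph dimension of the product class $\mc{F}$ via Lemma~\ref{lemma:product-vc}, since the left factor $\<u, \phi(x)\>$ is real-valued rather than Boolean; once this is in hand, the rest is routine peeling.
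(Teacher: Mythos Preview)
Your proposal is correct and follows essentially the same route as the paper: both arguments peel $\mc{F}$ into geometrically spaced variance shells, control the expected supremum on each shell via symmetrization plus the localized Rademacher bound~\eqref{eqn:rademacher-vc-control} (which in turn rests on the VC bound of Lemma~\ref{lemma:product-vc}), apply Talagrand's inequality (Lemma~\ref{lemma:talagrand}) within each shell, and union-bound over the $K_n$ shells. The only cosmetic differences are the direction of the peeling (the paper indexes shells by increasing variance starting from the threshold $\radphi^2 d\log n/n$, you index by decreasing radii $r_k = 2^{-k}\radphi$) and that the paper isolates the sub-threshold shell $\mc{F}^0$ explicitly rather than absorbing it into the additive residual.
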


Next we present a version of a result appearing as \cite[Theorem
  14.12]{Wainwright19} (the result there assumes functions are mean-zero,
but an inspection of the proof shows this is unnecessary); see also the
results of \cite{Mendelson14} and~\cite[Proof of Proposition
  1]{DuchiRu18a}. These show that second moments satisfy one-sided
concentration bounds with high probability as soon as
we have the fourth moment condition
\begin{equation}
  \label{eqn:four-moments}
  \E[f^4(X, \scorerv)] \le b^2 \E[f^2(X, \scorerv)]
  ~~ \mbox{for~all~} f \in \mc{F}.
\end{equation}
For the setting we consider, where $\mc{F}$ consists
of product functions~\eqref{eqn:product-function-class},
inequality~\eqref{eqn:four-moments} immediately
holds with $b = \radphi$, though tighter constants may be possible.
\begin{lemma}
  \label{lemma:no-concentration-lower}
  There exist numerical constants $0 < c$ and $C < \infty$ such that
  the following holds. Let inequality~\eqref{eqn:four-moments} hold and
  for $\mc{F}_r = \{f \mid Pf^2 \le r^2\}$, let
  $r$ satisfy $\E[\rademacher_n(\mc{F}_r)] \le \frac{r^2}{C b}$. Then
  with probability at least $1 - e^{-c n r^2 / b^2}$,
  \begin{equation*}
    P_n f^2 \ge \half P f^2
    ~~ \mbox{simultaneously~for~all~} f
    ~ \mbox{s.t.}~ v(f) \ge r.
  \end{equation*}
\end{lemma}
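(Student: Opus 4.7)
The plan is to use standard localization machinery: reduce to the variance ball $\mc{F}_r$ via a star-shaped rescaling, bound $\E \sup_{f \in \mc{F}_r}(Pf^2 - P_n f^2)$ by symmetrization plus the quadratic contraction, and upgrade to a high-probability statement via Talagrand's inequality (Lemma~\ref{lemma:talagrand}).

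First I would perform the reduction. Working (as in \cite{BartlettBoMe05, Wainwright19}) with the star-hull of $\mc{F}$ around $0$, for any $f$ with $v(f) \ge r$ define $\tilde f \defeq (r / v(f))\, f \in \mc{F}_r$, so that $v(\tilde f) = r$. If we establish $P_n \tilde f^2 \ge r^2/2$, multiplying through by $v(f)^2/r^2$ gives $P_n f^2 \ge \half v(f)^2 = \half Pf^2$. It therefore suffices to show
\begin{equation*}
Z \defeq \sup_{f \in \mc{F}_r} \left( Pf^2 - P_n f^2 \right) \le \frac{r^2}{2}
\end{equation*}
with probability at least $1 - e^{-cnr^2/b^2}$.

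Next I would control $\E Z$ using the Rademacher hypothesis. Symmetrization gives $\E Z \le 2 \E \rademacher_n(\{f^2 : f \in \mc{F}_r\})$, and the Ledoux--Talagrand contraction inequality applied to the $2b$-Lipschitz, $0$-fixing map $t \mapsto t^2$ on $[-b,b]$ yields $\E \rademacher_n(\{f^2 : f \in \mc{F}_r\}) \le 4b \E \rademacher_n(\mc{F}_r) \le 4 r^2/C$, so $\E Z \le 8 r^2/C$. I would then apply Lemma~\ref{lemma:talagrand} to the centered class $\{Pf^2 - f^2 : f \in \mc{F}_r\}$, which has envelope $b^2$ and variance $\sigma^2 \le \E f^4 \le b^2 Pf^2 \le b^2 r^2$ by the four-moments condition~\eqref{eqn:four-moments}. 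With $t = c r^2 / b^2$ the lemma yields a deviation bound $\sqrt{2 v^2 t} + b^2 t / 3$ where $v^2 \le b^2 r^2 + 2 b^2 \E Z \le b^2 r^2 (1 + 16/C)$; this simplifies to $O(r^2 \sqrt{c}) + c r^2/3$. Choosing $C$ large and $c$ small makes $\E Z$ plus this deviation at most $r^2/2$, giving the claim with probability at least $1 - e^{-n t} = 1 - e^{-c n r^2/b^2}$.

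The main obstacle is juggling constants. The single $C$ must simultaneously (i) make $\E Z$ small relative to $r^2$ via the Rademacher bound and (ii) keep the Talagrand variance proxy $v^2 = \sigma^2 + 2 b^2 \E Z$ close to $b^2 r^2$ so that $\sqrt{2 v^2 t}$ stays $O(r^2)$; the small $c$ must then trade deviation magnitude against the probability exponent. A standard countability/separability reduction on $\mc{F}_r$ is required to apply Lemma~\ref{lemma:talagrand} to an \emph{a priori} non-countable class.
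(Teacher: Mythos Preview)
Your proposal is correct and follows precisely the standard localization argument the paper invokes by citation (Wainwright \cite[Thm.~14.12]{Wainwright19}, \cite{BartlettBoMe05, Mendelson14}): star-shaped rescaling to reduce to the $r$-ball, symmetrization plus contraction to bound $\E Z$ by $O(b)\,\E\rademacher_n(\mc{F}_r)$, and Talagrand's inequality (Lemma~\ref{lemma:talagrand}) with the fourth-moment bound~\eqref{eqn:four-moments} controlling the variance proxy. The paper itself does not give a proof but defers to those references, so your sketch is essentially what an inlined version of their argument would look like; the constants work out exactly as you indicate.
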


Inequality~\eqref{eqn:rademacher-vc-control} shows the conclusions of
of Lemma~\ref{lemma:no-concentration-lower} hold if the radius $r$
satisfies
\begin{equation*}
  r \sqrt{\frac{d}{n} \log \frac{n}{d}}
  \lesssim \frac{r^2}{\radphi}
  ~~ \mbox{or} ~~
  r^2 \gtrsim \radphi^2 \cdot \frac{d}{n} \log \frac{n}{d}.
\end{equation*}
We then obtain the following consequence:
\begin{lemma}
  \label{lemma:specialize-no-concentration}
  Let $r^2 \gtrsim \radphi^2 \frac{d}{n} \log \frac{n}{d}$.
  Then with probability at least $1 - e^{-c n r^2 / \radphi^2}$,
  \begin{equation*}
    P_n\<u, \phi(X)\>^2 \indic{\scorerv > h(X)}
    \ge \half P\<u, \phi(X)\>^2 \indic{\scorerv > h(X)}
  \end{equation*}
  simultaneously over $u \in \ball$ and $h$ such that
  $P\<u, \phi(X)\>^2 \indic{\scorerv > h(X)} \ge r^2$.
\end{lemma}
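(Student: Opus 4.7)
The plan is to specialize Lemma~\ref{lemma:no-concentration-lower} to the product class $\mc{F}$ from~\eqref{eqn:product-function-class}, $f_{u,h}(x,\scoreval) = \<u,\phi(x)\>\indic{\scoreval > h(x)}$ with $u \in \ball$ and $h \in \mc{H}$, where $\mc{H}$ is the VC-class of dimension $d$ considered earlier (e.g.\ the linear functionals $\<\theta, \phi(\cdot)\>$). Boundedness by $\radphi$ and the fourth-moment condition~\eqref{eqn:four-moments} with $b = \radphi$ both hold, as noted immediately below~\eqref{eqn:four-moments}. Star-convexity is obtained by enlarging $\ball$ to its star hull $\{\lambda u : \lambda \in [0,1],\, u \in \ball\}$: doing so only enlarges $\mc{F}$ and hence strengthens the conclusion, and the resulting class is closed under $f \mapsto \lambda f$ because $\lambda f_{u,h} = f_{\lambda u, h}$.

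The remaining precondition of Lemma~\ref{lemma:no-concentration-lower} is $\E[\rademacher_n(\mc{F}_r)] \le r^2/(C\radphi)$, which I would verify by combining Lemma~\ref{lemma:product-vc} with inequality~\eqref{eqn:rademacher-vc-control}. Lemma~\ref{lemma:product-vc} gives $\mc{F}$ VC-dimension $O(d)$, since both $\{x \mapsto \<u,\phi(x)\> : u \in \R^d\}$ and $\mc{H}$ have VC-dimension $d$. Under the hypothesis $r^2 \gtrsim \radphi^2 (d/n) \log(n/d)$, inequality~\eqref{eqn:rademacher-vc-control} then yields
\begin{equation*}
  \E[\rademacher_n(\mc{F}_r)]
  \le \frac{\radphi}{n} + c\, r \sqrt{\frac{d}{n} \log \frac{\radphi}{r}}.
\end{equation*}
The main subtle step is bounding this right-hand side by $r^2/(C\radphi)$ using only the hypothesis. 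The term $\radphi/n$ is dominated by $r^2/\radphi$ as soon as $r^2 \gtrsim \radphi^2/n$, which the hypothesis provides. For the second term, the requirement $r\sqrt{(d/n) \log(\radphi/r)} \lesssim r^2/\radphi$ is equivalent to $r^2 \gtrsim \radphi^2 (d/n) \log(\radphi/r)$; the hypothesis forces $r \ge \radphi\sqrt{(d/n) \log(n/d)}$, hence $\log(\radphi/r) \le \tfrac12 \log(n/d)$, so the hypothesis suffices up to absolute constants.

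With all preconditions in hand, Lemma~\ref{lemma:no-concentration-lower} delivers that with probability at least $1 - e^{-c n r^2/\radphi^2}$, every $f_{u,h}$ with $P f_{u,h}^2 \ge r^2$ satisfies $P_n f_{u,h}^2 \ge \tfrac12 P f_{u,h}^2$, which is precisely the claim of the lemma. The hard part is not conceptual but arithmetic bookkeeping: keeping the $\log(\radphi/r)$ factor from the Rademacher bound consistent with the $\log(n/d)$ factor in the hypothesis as one peels through the chain of inequalities.
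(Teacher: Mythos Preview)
Your proposal is correct and follows essentially the same route as the paper: the paper's argument is the brief paragraph immediately preceding the lemma, which simply observes that inequality~\eqref{eqn:rademacher-vc-control} gives $\E[\rademacher_n(\mc{F}_r)] \lesssim r\sqrt{(d/n)\log(n/d)} \lesssim r^2/\radphi$ once $r^2 \gtrsim \radphi^2 (d/n)\log(n/d)$, so Lemma~\ref{lemma:no-concentration-lower} applies with $b = \radphi$. Your version is more explicit about star-convexity and about reconciling the $\log(\radphi/r)$ factor with $\log(n/d)$, but these are details the paper elides rather than handles differently.
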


Now, let $\what{h} = \<\what{\theta}, \phi(\cdot)\>$, where $\what{\theta}$
solves the problem~\eqref{eqn:empirical-quantile-estimator}.
Then simultaneously for all $u \in \ball$,
with probability at least $1 - K_n e^{-t}$,
\begin{equation}
  \label{eqn:bart-ride}
  \left|(P_n - P)\<u, \phi(X)\> \indic{\scorerv > \what{h}(X)}\right|
  \le c \left[v(\what{h}, u)
    \sqrt{\frac{d \log n + t}{n}}
    + \radphi \frac{d \log n + t}{n}\right]
\end{equation}
by Lemma~\ref{lemma:peel-your-potatoes}.
Moreover, for $r^2 \gtrsim \radphi^2 \frac{d}{n} \log \frac{n}{d}$,
either $v(\what{h}, u) \le r$ or
\begin{equation*}
  v^2(\what{h}, u)
  \le 2 P_n \<\phi(X), u\>^2 \indic{\scorerv > \what{h}(X)}
\end{equation*}
by Lemma~\ref{lemma:specialize-no-concentration} (with the appropriate
probability $1 - e^{-cr^2 / \radphi^2}$).

\subsection{Proof of Theorem~\ref{theorem:one-sided-sharp}: nonnegative
  weights}
\label{sec:proof-one-sided-sharp}

We now specialize our development to the particular
cases that $\<u, \phi(x)\> \ge 0$ for all $x \in
\mc{X}$.
First, we leverage the particular structure of the quantile
loss to give a non-probabilistic bound on the empirical
weights.
\begin{lemma}
  \label{lemma:one-directional-coverageish}
  Let $u$ be such that $\<u, \phi(x)\> \ge 0$ for all $x \in \mc{X}$.
  Then
  \begin{equation*}
    P_n \<\phi(X), u\> \indic{\scorerv > \what{h}(X)}
    \le \alpha P_n \<\phi(X), u\>.
  \end{equation*}
  If additionally $\scorerv_i$ are all distinct, then
  \begin{equation*}
    P_n \<\phi(X), u\> \indic{\scorerv > \what{h}(X)}
    \ge \alpha P_n \<\phi(X), u\> - \radphi(u) \frac{d}{n}.
  \end{equation*}
\end{lemma}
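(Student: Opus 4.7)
The plan is to derive both inequalities directly from the first-order optimality (KKT-type) condition for the empirical quantile regression problem, using the structure of the subdifferential of $\loss_\alpha$. The key algebraic observation is that $P_n \<\phi(X), u\> \indic{\scorerv > \what{h}(X)} - \alpha P_n \<\phi(X), u\>$ can be rewritten as a sum supported only on the indices where ties occur (the set $\mc{I}_0$), after which both bounds follow by sign/magnitude control of the subgradients.

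More concretely, I would first recall that because $\what{\theta}$ minimizes the convex function $\theta \mapsto \sum_i \loss_\alpha(\<\theta, \phi(X_i)\> - \scorerv_i)$, there exist scalars $\eta_i \in \partial \loss_\alpha(\<\what{\theta}, \phi(X_i)\> - \scorerv_i)$ as in~\eqref{eqn:g-gradient-calcs}, i.e., $\eta_i = \alpha$ on $\mc{I}_+$, $\eta_i = -(1-\alpha)$ on $\mc{I}_-$, and $\eta_i \in [-(1-\alpha), \alpha]$ on $\mc{I}_0$, with $\sum_i \eta_i \phi(X_i) = 0$. Pairing this identity with $u$ yields $\sum_i \eta_i \<\phi(X_i), u\> = 0$. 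Splitting the sum over the three index sets and using that $\indic{\scorerv_i > \what{h}(X_i)} = 1$ exactly on $\mc{I}_-$ and vanishes on $\mc{I}_+ \cup \mc{I}_0$, a short rearrangement gives the clean identity
\begin{equation*}
  P_n \<\phi(X), u\> \indic{\scorerv > \what{h}(X)} - \alpha P_n \<\phi(X), u\>
  = \frac{1}{n}\sum_{i \in \mc{I}_0}(\eta_i - \alpha)\<\phi(X_i), u\>.
\end{equation*}

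The first claim then follows immediately: when $\<u, \phi(x)\> \ge 0$ for all $x$, every summand is nonpositive since $\eta_i - \alpha \in [-1, 0]$, so the right-hand side is $\le 0$. For the second claim, I would bound each summand by $|\eta_i - \alpha| \cdot |\<\phi(X_i), u\>| \le 1 \cdot \radphi(u)$ and then invoke the ``at most $d$ equalities'' fact already used in the proof of Theorem~\ref{theorem:approximate-conditional-coverage}: when the $\scorerv_i$ are distinct, $\card(\mc{I}_0) \le d$, which gives the lower bound $-\radphi(u) \cdot d/n$.

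The only delicate step is the invocation of $\card(\mc{I}_0) \le d$, which relies on the same ``basic-solution'' style reasoning as in Section~\ref{sec:proof-approximate-conditional-coverage}, namely that $d+1$ distinct values of $\scorerv_i$ cannot be simultaneously fitted by a single linear function $\<\what\theta, \phi(\cdot)\>$ under the distinctness assumption; everything else is straightforward bookkeeping with the subdifferential of $\loss_\alpha$. No concentration or empirical process machinery is needed — the lemma is a purely deterministic consequence of optimality.
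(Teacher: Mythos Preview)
Your proof is correct. Both your argument and the paper's rest on first-order optimality of $\what{\theta}$, but the packaging differs. The paper works with directional derivatives: it writes
\[
  P_n \<\phi(X), u\> \indic{\scorerv > \what{h}(X)}
  = -L_n'(\what{h}; u) + \alpha P_n\<u, \phi(X)\>,
\]
using positive homogeneity (which is where $\<u,\phi(x)\>\ge 0$ enters) and then $L_n'(\what{h};u)\ge 0$ to get the upper bound; for the lower bound it repeats the calculation with the left directional derivative to obtain $P_n\<\phi(X),u\>\indic{\scorerv \ge \what{h}(X)} \ge \alpha P_n\<u,\phi(X)\>$, and only then invokes $\card(\mc{I}_0)\le d$ to pass from $\ge$ to $>$. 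Your route instead reuses the subgradient identity already derived in Section~\ref{sec:proof-approximate-conditional-coverage}, pairing it with $u$ to get the exact equality
\[
  P_n \<\phi(X), u\>\bigl(\indic{\scorerv > \what{h}(X)} - \alpha\bigr)
  = \frac{1}{n}\sum_{i\in\mc{I}_0}(\eta_i-\alpha)\<\phi(X_i),u\>,
\]
from which both inequalities drop out by sign and magnitude bounds on $\eta_i-\alpha\in[-1,0]$. Your version is a bit more unified---one identity handles both directions---while the paper's directional-derivative framing makes the role of the nonnegativity hypothesis more explicit. The invocation of $\card(\mc{I}_0)\le d$ under distinct scores is identical in both.
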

\begin{proof}
  The directional derivative
  $\loss'_\alpha(t; 1) \defeq \lim_{\delta \downarrow 0}
  \frac{\loss_\alpha(t + \delta) - \loss_\alpha(t)}{\delta}
  = \indic{t \ge 0} - (1 - \alpha)$.
  Then
  \begin{align*}
    P_n \<\phi(X), u\> \indic{\scorerv > \what{h}(X)}
    & = P_n\<\phi(X), u\>
    \left(1 - \alpha
    - \indic{\scorerv \le \what{h}(X)}\right)
    + P_n \<u, \phi(X)\> \alpha \\
    & = 
    P_n \<\phi(X), u\>
    \left(-\loss'_\alpha(\what{h}(X) - \scorerv; 1)\right)
    + \alpha P_n\<u, \phi(X)\>.
  \end{align*}
  Letting $L_n(h) = P_n \loss_\alpha(h(X) - \scorerv)$, we now use that
  directional derivatives are positively homogeneous~\cite{HiriartUrrutyLe93}
  and that by assumption $\what{h}$ minimizes
  $P_n \loss_\alpha(h(X) - \scorerv)$ over
  functions of the form $h(x) = \<\theta, \phi(x)\>$ to obtain
  \begin{equation*}
    P_n \<\phi(X), u\>
    \left(-\loss'_\alpha(\what{h}(X) - \scorerv; 1)\right)
    = - P_n\loss'_\alpha(\what{h}(X) - \scorerv; \<\phi(X), u\>)
    = - L_n'(\what{h}(X); u).
  \end{equation*}
  But of course, as $\what{h}$ minimizes $L_n$,
  we have $L_n'(\what{h}(X); u) \ge 0$ for all $u$, and so
  \begin{equation*}
    P_n\<\phi(X), u\> \indic{\scorerv > \what{h}(X)}
    \le \alpha P_n \<u, \phi(X)\>.
  \end{equation*}

  If $\scorerv_i$ are all distinct, then considering the
  left directional derivative, we also have
  \begin{align*}
    P_n\<\phi(X), u\> \indic{\scorerv \ge \what{h}(X)} \ge
    \alpha P_n \<u, \phi(X)\>.
  \end{align*}
  If $\mc{I}_0 = \{i \mid \what{h}(X_i) = \scorerv_i\}$,
  then $\card(\mc{I}_0) \le d$, and so
  \begin{equation*}
    0 \ge P_n \<\phi(X), u\>
    \left(\indic{\scorerv > \what{h}(X)}
    - \indic{\scorerv \ge \what{h}(X)}\right)
    = -P_n \<\phi(X), u\> \indic{\scorerv = \what{h}(X)}
    \ge -\radphi(u) \frac{d}{n}.
  \end{equation*}
  Rearranging and performing a bit of algebra, we
  obtain the second claim of the lemma.
\end{proof}

From the lemma, we see that
\begin{align}
  \lefteqn{
    P \<u, \phi(X)\> \left(\indic{\scorerv > \what{h}(X)}
    - \alpha\right)} \nonumber \\
  & = (P - P_n)\<u, \phi(X)\> \left(\indic{\scorerv > \what{h}(X)}
  - \alpha\right)
  + P_n \<u, \phi(X)\> \left(\indic{\scorerv > \what{h}(X)}
  - \alpha\right) \nonumber \\
  & \le (P - P_n) \<u, \phi(X)\> \indic{\scorerv > \what{h}(X)}
  - \alpha (P - P_n) \<u, \phi(X)\>
  \label{eqn:incorporate-the-alpha}
\end{align}
by Lemma~\ref{lemma:one-directional-coverageish}.
Additionally, the lemma implies that
\begin{align*}
  P_n\<\phi(X), u\>^2 \indic{\scorerv > \what{h}(X)}
  & \le \radphi(u) P_n \<\phi(X), u\> \indic{\scorerv > \what{h}(X)}
  \le \radphi(u) \alpha P_n \<\phi(X), u\>.
\end{align*}
We use this to control the first term in the
expansion~\eqref{eqn:incorporate-the-alpha}
by combining these bounds with inequality~\eqref{eqn:bart-ride}
and considering
that $v(\what{h}, u) \le r$ or $v(\what{h}, u) > r$
where $r^2 = O(1) \radphi^2 \frac{d}{n} \log \frac{n}{d}$.
In the latter, we have
$v^2(\what{h}, u) \le c \radphi(u) \alpha P_n\<\phi(X), u\>$.
We have therefore shown that for
any $r^2 \gtrsim \frac{d}{n} \log \frac{n}{d}$,
with probability at
least $1 - K_n e^{-t} - e^{-n r^2}$,
for all $u \in \ball$
with $\<u, \phi(x)\> \ge 0$,
\begin{align}
  \label{eqn:ashby-stop}
  \lefteqn{\left|(P_n - P)\<u, \phi(X)\> \indic{\scorerv > \what{h}(X)}
    \right|}
  \\
  & \le c \left[
    \left(\sqrt{\radphi(u) \alpha P_n \<u, \phi(X)\>} + \radphi r\right)
    \sqrt{\frac{d \log n + t}{n}}
    + \radphi \frac{d \log n + t}{n}
    \right].
  \nonumber
\end{align}

Applying Lemma~\ref{lemma:peel-your-potatoes} to the quantity
$P_n\<u, \phi(X)\>$ shows that simultaneously
for all $u \in \ball$,
\begin{align*}
  |(P_n - P)\<u, \phi(X)\>| & \le
  c
  \left[\sqrt{\radphi(u) P\<u, \phi(X)\>}
    \sqrt{\frac{d \log n + t}{n}}
    + \radphi \frac{d \log n + t}{n}\right]
\end{align*}
with probability at least $1 - K_n e^{-t}$.
Substituting this into the
bounds~\eqref{eqn:incorporate-the-alpha} and~\eqref{eqn:ashby-stop}, and
ignoring lower-order terms (because $\alpha \le 1$), we obtain the guarantee
that for all $t \ge 0$ and $r^2 \gtrsim \frac{d}{n} \log \frac{n}{d}$, then
with probability at least $1 - 2 K_n e^{-t} - e^{-n r^2}$, for all $u$ such
that $P\<u, \phi(X)\> \ge \radphi \frac{d \log n + t}{n}$,
\begin{align*}
  \E\left[
    \<u, \phi(X)\> \left(\indic{\scorerv > \what{h}(X)} - \alpha\right)
    \mid P_n\right]
  & \le
  c \left[
    \left(\sqrt{\alpha \radphi(u) P\<u, \phi(X)\>} + \radphi r\right)
    \sqrt{\frac{d \log n + t}{n}}
    + \radphi \frac{d \log n + t}{n}
    \right]
\end{align*}
and for all $u$ such that $P\<u, \phi(X)\> \le \radphi
\frac{d \log n + t}{n}$,
\begin{align*}
  \E\left[
    \<u, \phi(X)\> \left(\indic{\scorerv > \what{h}(X)} - \alpha\right)
    \mid P_n\right]
  \le c \radphi \left[r \sqrt{\frac{d \log n + t}{n}}
    + \frac{d \log n + t}{n}\right].
\end{align*}
Combining the inequalities and replacing
$r^2$ with $\frac{d \log n + t}{n}$
gives the first claim of Theorem~\ref{theorem:one-sided-sharp}.

For the second claim, when the scores $\scorerv_i$ are distinct,
note simply that we may replace
inequality~\eqref{eqn:incorporate-the-alpha}
with
\begin{align*}
  \lefteqn{
    P \<u, \phi(X)\> \left(\indic{\scorerv > \what{h}(X)}
    - \alpha\right)} \nonumber \\
  & = (P - P_n)\<u, \phi(X)\> \left(\indic{\scorerv > \what{h}(X)}
  - \alpha\right)
  + P_n \<u, \phi(X)\> \left(\indic{\scorerv > \what{h}(X)}
  - \alpha\right) \nonumber \\
  & \ge (P - P_n) \<u, \phi(X)\> \indic{\scorerv > \what{h}(X)}
  - \alpha (P - P_n) \<u, \phi(X)\>
  - \radphi(u) \frac{d}{n}.
\end{align*}
The remainder of the argument is, \emph{mutatis mutandis}, identical to the
proof of the first claim of the theorem.

\subsection{Proof of Theorem~\ref{theorem:two-sided-sharp}: distinct scores}
\label{sec:proof-two-sided-sharp}

Because of the distinctness of
$\scorerv_i$ and that we assume $\phi_1(x) = 1$ (that is, we include
the constant offset), the optimality conditions for
the quantile loss imply that
\begin{equation*}
  \frac{d}{n} \ge \sum_{i = 1}^n \indic{\scorerv_i > \what{h}(X_i)}
  - \alpha \ge -\frac{d}{n}.
\end{equation*}
So if $\radphi(u) \defeq \sup_{x \in \mc{X}} |\<u, \phi(x)\>|$, then
\begin{equation*}
  P_n\<\phi(X), u\>^2 \indic{\scorerv > \what{h}(X)}
  \le \radphi^2(u) \left(\alpha  + \frac{d}{n} \right).
\end{equation*}
Applying inequality~\eqref{eqn:bart-ride},
we find that with probability at least $1 - K_n e^{-t} - e^{-c n r^2 /
  \radphi^2}$,
\begin{equation*}
  \left|(P_n - P)\<u, \phi(X)\> \indic{\scorerv > \what{h}(X)}\right|
  \le c \left[\left(\radphi(u)\sqrt{\alpha}
    + r\right)\sqrt{\frac{d \log n + t}{n}}
    + \radphi  \frac{d \log n + t}{n}\right].
\end{equation*}
The deviations $\alpha (P_n - P)\<u, \phi(X)\>$ are of smaller
order than this by Lemma~\ref{lemma:peel-your-potatoes}.

\section{Experimental Results}

Our main purpose thus far has been to re-investigate conditional quantile
estimation procedures, providing theoretical bounds for their performance;
there is already substantial practical experience with these methods.
Nonetheless, we can incorporate a few recent theoretical results to enhance
the practical performance of the proposed conformalization procedures,
allowing some additional performance gains, while simultaneously
exhibiting the need for future work.
We consider mostly the difference between the full conformal approach that
\citet{GibbsChCa23} develop and the split-conformal approaches
that simply minimize the empirical
loss~\eqref{eqn:empirical-quantile-estimator}.
Our theoretical results provide no guidance to lower-order corrections
to the desired level $\alpha$ to guarantee (exact) marginal coverage
rather than approximate sample-conditional coverage, and
so we proceed a bit heuristically here, using theoretical
results to motivate modifications of the level $\alpha$ that
do not change the sample-conditional coverage results we provide, but
which turn out to be empirically effective.

\newcommand{\alphadesired}{\alpha_{\textup{des}}}

To motivate our tweaks, recall the classical (unconditional) conformal
approach to achieve exact finite-sample marginal coverage $\P(Y_{n + 1} \in
\what{C}_n(X_{n + 1}))$, where the confidence confidence set $\what{C}_n(x)
= \{y \mid \scoreval(x, y) \le \what{\tau}_n\}$.
Setting $\what{\tau}_n = \quant_{(1 + 1/n) (1 - \alpha)}(\scorerv_1^n)$, the
slightly enlarged quantile, guarantees $(1 - \alpha)$ coverage; this follows
by letting $\scorerv_{(i,n)}$ be the order statistics of $\scorerv_1^n$ and
$\scorerv_{(i, n + 1)}$ those of $\scorerv_1^{n + 1}$, and noting that the
score $\scorerv_{n + 1} \le \scorerv_{(k,n)}$ if and only if $\scorerv_{n +
  1} \le \scorerv_{(k , n + 1)}$~\cite[Lemma 2]{RomanoPaCa19}, so the
inflation by $\frac{n+1}{n}$ is necessary.
Equivalently, if we wish to achieve coverage $(1 - \alphadesired)$ using the
estimator~\eqref{eqn:empirical-quantile-estimator} with feature mapping
$\phi(x) = 1$ fit at level $\alpha$, then $\alpha$ must solve $(1 - \alpha)
= (1 + \frac{1}{n}) (1 - \alphadesired)$, that is, $\alpha = 1 - (1 +
\frac{1}{n})(1 - \alphadesired) = (1 + \frac{1}{n}) \alphadesired -
\frac{1}{n}$.
That is, quantile regression under-covers.

When $\phi : \mc{X} \to \R^d$, it is then natural to heuristically imagine
that the order statistics ought to ``swap orders'' by at most roughly $d$
items and so we ought to target coverage $\frac{n + d}{n}(1 -
\alphadesired)$; unfortunately, it escapes our ability to prove such a
result currently.
Nonetheless, we consider a ``naive'' adaptation of the confidence level,
setting $\alpha$ to solve
\begin{equation}
  \label{eqn:naive-alpha}
  (1 - \alpha) = \left(1 + \frac{d}{n}\right) (1 - \alphadesired),
  ~~ \mbox{or} ~~
  \alpha = \left(1 + \frac{d}{n}\right) \alphadesired - \frac{d}{n},
\end{equation}
and then choosing $\what{\theta}$ to
minimize~\eqref{eqn:empirical-quantile-estimator} with this $\alpha$,
which we term the ``naive'' choice.
\citet{BaiMeWaXi21} give an alternative perspective, where they show
that the actual marginal coverage achieved by quantile regression
at level $\alpha$ in the high-dimensional scaling $d, n \to \infty$ with
$d/n \to \kappa \in (0, 1)$ is
\begin{equation*}
  (1 - \alpha) - \frac{d}{n} \left(\half - \alpha\right) + o(d/n)
\end{equation*}
for $\alpha < \half$, at least when the covariates are Gaussian.
Solving this and ignoring the higher-order term, we recognize that to
achieve desired coverage $\alpha$, we ought (according to this heuristic) to
compute the estimator~\eqref{eqn:empirical-quantile-estimator} using
$\alpha$ solving
\begin{equation}
  \label{eqn:bai-alpha}
  (1 - \alphadesired) = (1 - \alpha) - \frac{d}{n}
  \left(\half - \alpha\right)
  ~~ \mbox{or} ~~
  \alpha = \frac{\alphadesired - \frac{d}{2n}}{1 - \frac{d}{n}}.
\end{equation}
We call the choice~\eqref{eqn:bai-alpha} the ``scaling'' choice.
Neither of the rescalings~\eqref{eqn:naive-alpha} or~\eqref{eqn:bai-alpha}
have any effect on the convergence guarantees our theory provides, as they
are of lower order.

\subsection{Synthetic datasets}
\label{sec:synthetic}

We perform two synthetic experiments that give a
sense of the coverage properties of the methods we have analyzed.
These exploratory experiments help provide justification for the heuristic
corrections to the desired level $\alpha$ we set in the real data
experiments.

\subsubsection{Level rescaling on a simple synthetic dataset}


\newcommand{\noise}{\varepsilon}

For our first experiment, we consider the simple setting of
a standard Gaussian linear model, where we observe
\begin{equation*}
  y_i = \<w\opt, x_i\> + \noise_i,
  ~~ \noise_i \simiid \normal(0, 1)
  ~~ \mbox{and} ~~ x_i \simiid \normal(0, I_d).
\end{equation*}
We mimic the experiment \citet[Fig.~3]{GibbsChCa23} provide,
but we investigate the coverage properties of the
coverage set from the estimator~\eqref{eqn:empirical-quantile-estimator}
with uncorrected $\alpha$ and level $\alpha$ corrected
either naively~\eqref{eqn:naive-alpha} or via the scaling
correction~\eqref{eqn:bai-alpha}.
In all cases, we use the feature map $\phi(x) = (1, \indic{x_1 > 0}, \ldots,
\indic{x_d > 0}) \in \{0, 1\}^{d + 1}$ indicating nonnegative coordinates.
Figure~\ref{fig:simulation-correction-coverage} displays the results of this
experiment for 1000 trials, where in each trial, we draw $w\opt \sim
\uniform(\sphere^{d-1})$, fit a regression estimator $\what{w}$ on a
training dataset of size $n_{\textup{train}} = 100$ using least squares,
then conformalize this predictor using a validation set of size $n$ and
evaluate its coverage on a test dataset of size $n_{\textup{test}} = 10
n_{\textup{train}} = 1000$.
We vary the
ratio $n / d$ of the validation dataset, keeping $d = 20$ fixed.
From the figure, it is clear that the uncorrected confidence
set using $\alpha = \alphadesired = .1$
undercovers, especially when the ratio $n/d < 20$ or so.
The naive correction~\eqref{eqn:naive-alpha} appears to be a bit
conservative, while the scaling correction~\eqref{eqn:bai-alpha}
is more effective.

\begin{figure}[ht]
  \begin{center}
    \begin{tabular}{cc}
      \hspace{-1cm}
      \includegraphics[width=.55\columnwidth]{%
        Images/miscoverage_guassian_linreg} &
      \hspace{-.5cm}
      \includegraphics[width=.55\columnwidth]{%
        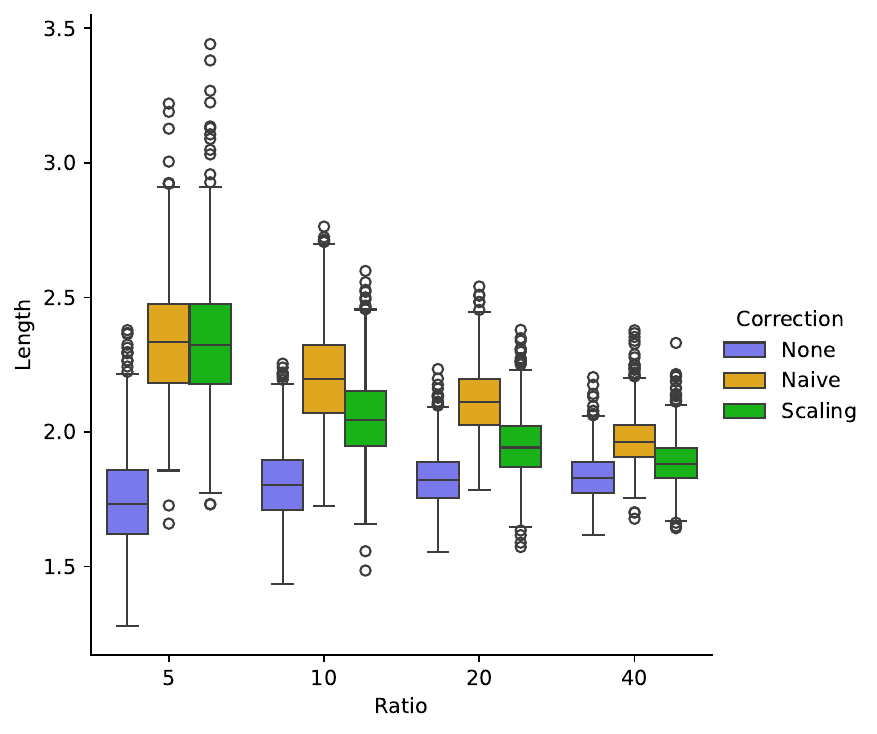} \\
      (a) & (b)
    \end{tabular}
    \caption{\label{fig:simulation-correction-coverage} Impact of the
      correction to $\alpha$ used in fitting the conformal
      predictor~\eqref{eqn:empirical-quantile-estimator} for a desired level
      $\alphadesired = .1$, i.e., 90\% coverage. The ``None'' correction
      uses $\alpha = \alphadesired$, ``Naive'' uses the
      correction~\eqref{eqn:naive-alpha}, and ``Scaling'' uses the
      correction~\eqref{eqn:bai-alpha}.  (a) Coverage rates with the desired
      coverage marked as the red line. (b) Width of predictive intervals
      $\what{C}(x) = \{y \in \R \mid |\what{f}(x) - y| \le
      \what{\theta}^\top \phi(x)\}$.}
  \end{center}
\end{figure}

\subsubsection{Full conformal versus split-conformal predictions}
\label{sec:offline-sin-simulation}

We briefly look at the coverage properties of the full conformalization
method~\eqref{eqn:implicit-full-confidence-set} from the
paper~\citep{GibbsChCa23}, comparing with split-conformal
methods~\eqref{eqn:empirical-quantile-estimator}, on a synthetic regression
dataset we design to have asymmetric mean-zero heteroskedastic noise.
We generate pairs $(X_i, Y_i) \in \R^2$ according to
$Y = f(x) + \noise(x)$,
discretizing $x \in [0, 1]$ into bins
$B_i = \{x \mid \frac{i}{k} \le x < \frac{i + 1}{k}\}$,
$i = 0, \ldots, k - 1$, for $k = 5$.
Within each experiment, we
draw $U_0, U_1 \simiid \uniform[-1, 1]$ and
$\phi_0, \phi_1 \simiid \uniform[\frac{\pi}{4}, 4 \pi]$
to define
\begin{equation*}
  f(x) = U_0 \cos(\phi_0 \cdot x)
  + U_1 \sin(\phi_1 \cdot x).
\end{equation*}
Within the $i$th region $\frac{i}{k} \le x < \frac{i + 1}{k}$ we
set $\lambda_{0,i} = \exp(3 - \frac{3}{k} i)$ and
$\lambda_{1,i} = \exp(4 - \frac{3}{k} i)$, i.e.,
evenly spaced in $\{e^{3}, \ldots, e^0\}$ and $\{e^{4}, \ldots, e^{1}\}$,
and draw
\begin{equation*}
  \noise(x) \sim \begin{cases} \expdist(\lambda_{0,i})
    & \mbox{with probability}~
    \frac{\lambda_{0,i}}{\lambda_{0,i} + \lambda_{1,i}}
    = \frac{1}{1 + e} \\
    -\expdist(\lambda_{1,i})
    & \mbox{otherwise},
  \end{cases}
\end{equation*}
so that $\E[\noise(x) \mid x] = 0$ but the noise is skewed upward,
with variance increasing in $i$.

Figure~\ref{fig:little-silly-experiment} shows the results of this
experiment over 200 independent trials, where in each
experiment we draw a new mean function $f$ and fit it using
a degree 5 polynomial regression on a training
set of size $n_{\textup{train}} = 200$.
The conformalization methods use a group-indicator featurization $\phi(x) =
(1, \indic{x \in B_1}, \ldots, \indic{x \in B_k})$ and confidence sets
$C(x) = \{y \mid \theta_0^\top \phi(x) \le y \le \theta_1^\top
\phi(x)\}$.
Within each trial, we compute miscoverage
proportions $\P(Y \not \in \what{C}(X) \mid X \in B_i)$ for each bin $i$ on
a test set of size 500, drawing a new function $f$.
We vary the size of the validation data $n_{\textup{val}}
= \{10 k, 20k, 40k, 80k, 160k\}$, and
use the scaling correction~\eqref{eqn:bai-alpha} to set $\alpha$ for
the split-conformal method.
The figure plots results for validation sizes $40k$ and $160k$; from the
figure---which is consistent with our other sample sizes and
experiments---we see that when the validation size is large relative to the
dimension of the mapping $\phi$, both methods are similar; for smaller
ratios, the offline method undercovers slightly within the groups, though
its marginal coverage remains near perfect in spite of the very non-Gaussian
data.

\begin{figure}
  \begin{center}
    \begin{tabular}{cc}
      \hspace{-.5cm}
      \includegraphics[width=.5\columnwidth]{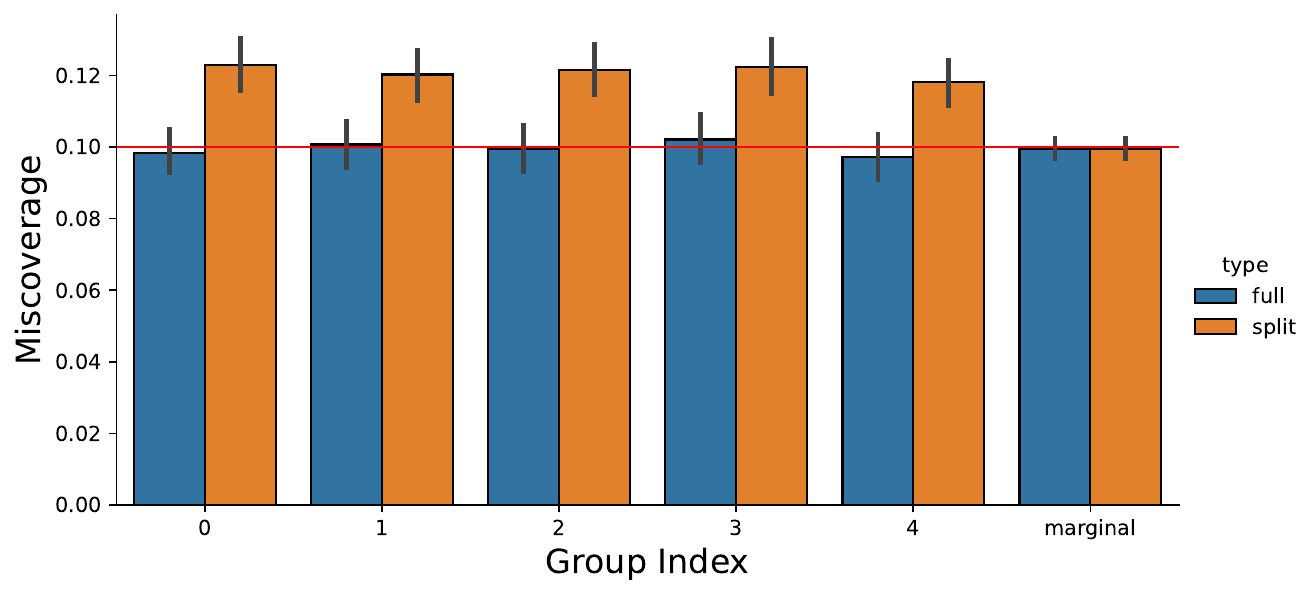}
      &
      \hspace{-.25cm}
      \includegraphics[width=.5\columnwidth]{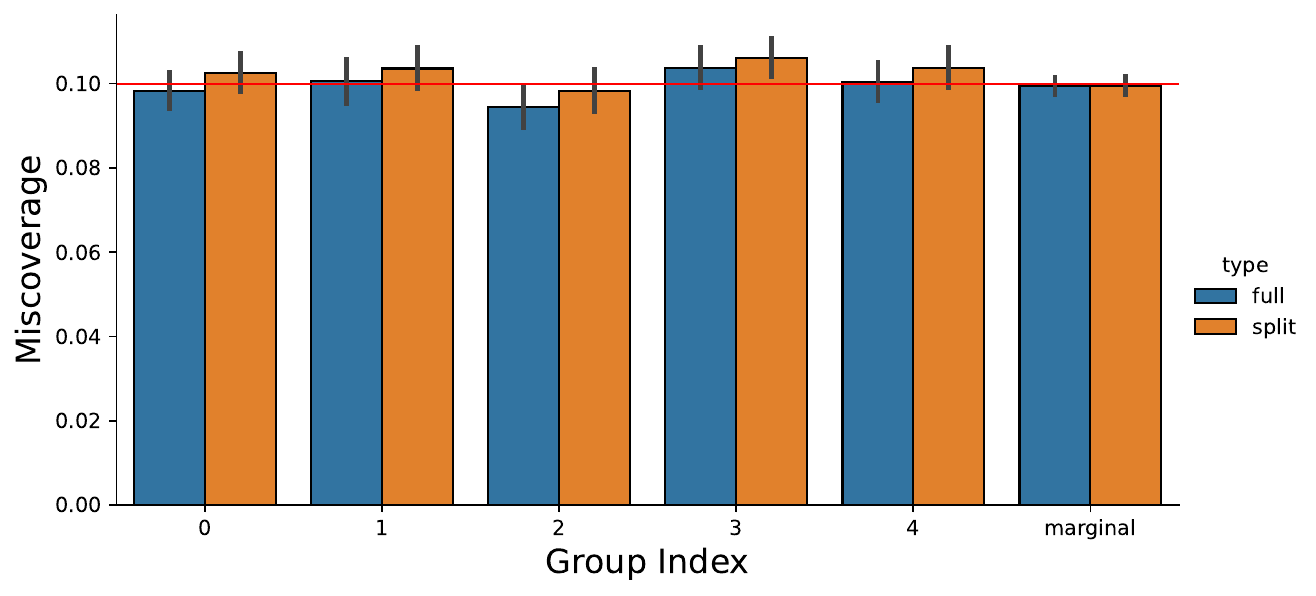} \\
      (a) & (b) \\
      \hspace{-.5cm}
      \includegraphics[width=.5\columnwidth]{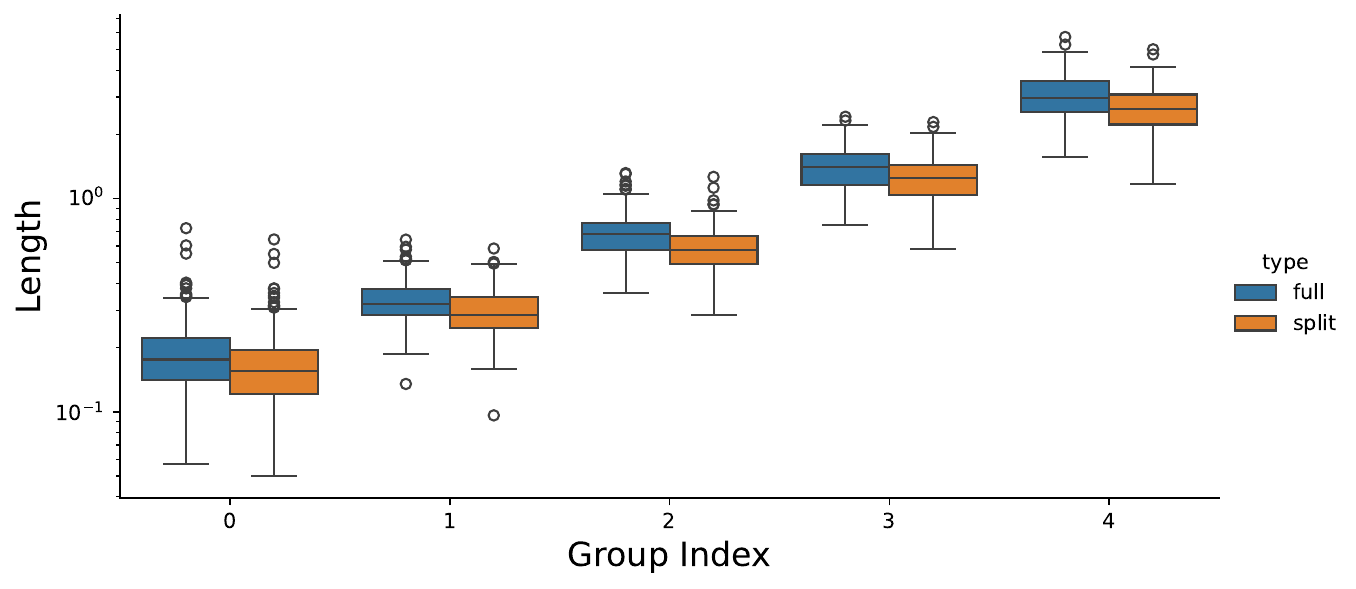} &
      \hspace{-.25cm}
      \includegraphics[width=.5\columnwidth]{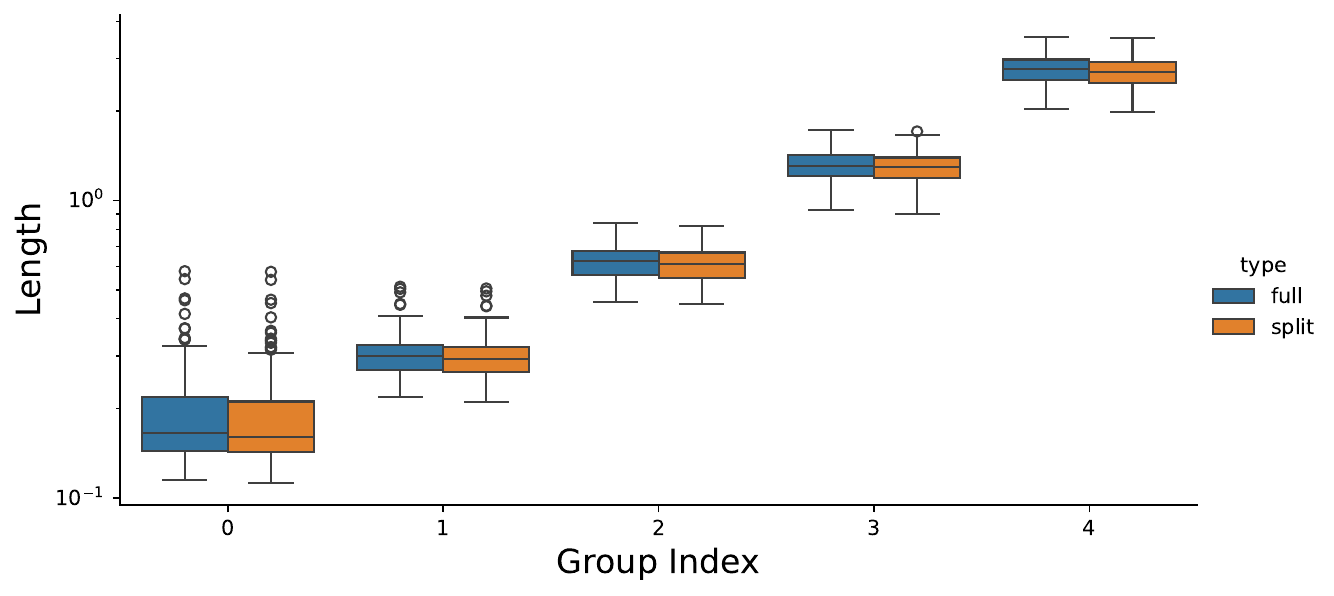} \\
      (c) & (d)
    \end{tabular}
    \caption{
      \label{fig:little-silly-experiment}
      Comparison of full- and split-conformal
      methods on the simulated sinusoidal data of
      Sec.~\ref{sec:offline-sin-simulation} with
      $n_{\textup{train}} = 200$ training examples and target
      miscoverage $\alpha = .1$.
      Plots (a) and (c) use validation
      sample sizes $n_{\textup{val}} = 20 k = 100$,
      while (b) and (d) use
      $n_{\textup{val}} = 160 k = 800$. Plots (a) and
      (b) show miscoverage $\P(Y \not \in \what{C}(X) \mid X \in B_i)$ by
      group $B_i$; plots (c) and (d) prediction interval lengths.
    }
  \end{center}
\end{figure}

We remark in passing that the full conformal method requires roughly $10
\times$ the amount of time to compute predictions as the split conformal
method requires to both fit a quantile prediction model and make its
predictions.
Once the split-conformal quantile model is available---it has been
fit---this difference becomes roughly a factor
of 2000--4000 in our experiments.
For some applications, this may be immaterial; for others, it may be
a substantial expense, suggesting that a decision between
the offline method and the online procedure may boil down to
one of computational feasibility.

\subsection{Prediction on CIFAR-100}

We also perform an exploratory experiment on the CIFAR-100 dataset, a
100-class image classification dataset consisting of 60,000 training
examples and a 10,000 example test set.
We use the output features of a 50 layer ResNet, pre-trained
on the ImageNet dataset~\cite{HeZhReSu16, HeZhReSu16b},
as a $d = 2048$-dimensional input into a multiclass
logistic regression classifier.
We repeat the following experiment 10 times:
\begin{enumerate}[1.]
\item Uniformly randomly split the training examples into a validation set
  of size 10,000 and a model training set of size 50,000, on which we fit a
  linear classifier $\scorefunc : \R^d \to \R^k$, where $\scorefunc_y(x) =
  \<\beta_y, x\>$ is the score assigned to class $y$, using multinomial
  logistic regression.
\item \label{item:project-down}
  Draw a random matrix $W \in \R^{d \times d_0}$, where
  $d_0 = 10$ and $W_{ij} \simiid \normal(0, 1)$,
  and use the validation data with score function
  $\scorefunc(x, y) = \scorefunc_y(x)$ and the lower-dimensional
  mapping $\phi(x) = W^\top x$
  to predict quantiles via $\what{h}(x) = \<\what{\theta}, \phi(x)\>$.
\item Compare the coverage of the full-conformal method,
  standard split conformal with a static threshold,
  meaning confidence sets of the form
  $\what{C}(x) = \{y \in [k] \mid \scorefunc(x, y) \le \what{\tau}\}$,
  and split conformal with the threshold function
  $\what{h}(x)$ fit on the validation data.
  To perform the comparison, we draw subsamples from the test
  $Z_{\textup{test}} = \{(x_i, y_i)\}_{i = 1}^{n_{\textup{test}}}$
  by defining groups $G$ of the form
  \begin{align*}
    G_{j,>} & = \left\{(x, y) \in Z_{\textup{test}}
    \mid  \<w_j, x\> \ge \quant_{.8}(\{\<w_j, x_i\>\}_{i = 1}^{n_{\textup{test}}})
    \right\} ~~ \mbox{and} \\
    G_{j,<} & = \left\{(x, y) \in Z_{\textup{test}}
    \mid  \<w_j, x\> \le \quant_{.2}(\{\<w_j, x_i\>\}_{i = 1}^{n_{\textup{test}}})
    \right\}
  \end{align*}
  for each row $w_j^\top$ of the random dimension reduction matrix
  $W$ from step~\ref{item:project-down}.
\end{enumerate}

Figure~\ref{fig:random-directions} displays the results of this experiment.
In the figure, we notice three main results: first, the static thresholded
sets $\what{C}(x) = \{y \mid \scoreval(x, y) \le \what{\tau}\}$ have
substantially more variability in coverage on the random slices of the
dataset.
Second, the split conformal method and full conformal methods have
similar coverage on each of the slices, with some
slices exhibiting more variability of the full conformal methodology
and some with the split conformal methodology, but all
around the nominal (desired) 90\% coverage level.
Finally, the split conformal methods slightly undercover marginally,
while the full conformal method slighlty overcovers marginally.

\begin{figure}
  \begin{center}
    \includegraphics[width=.8\columnwidth]{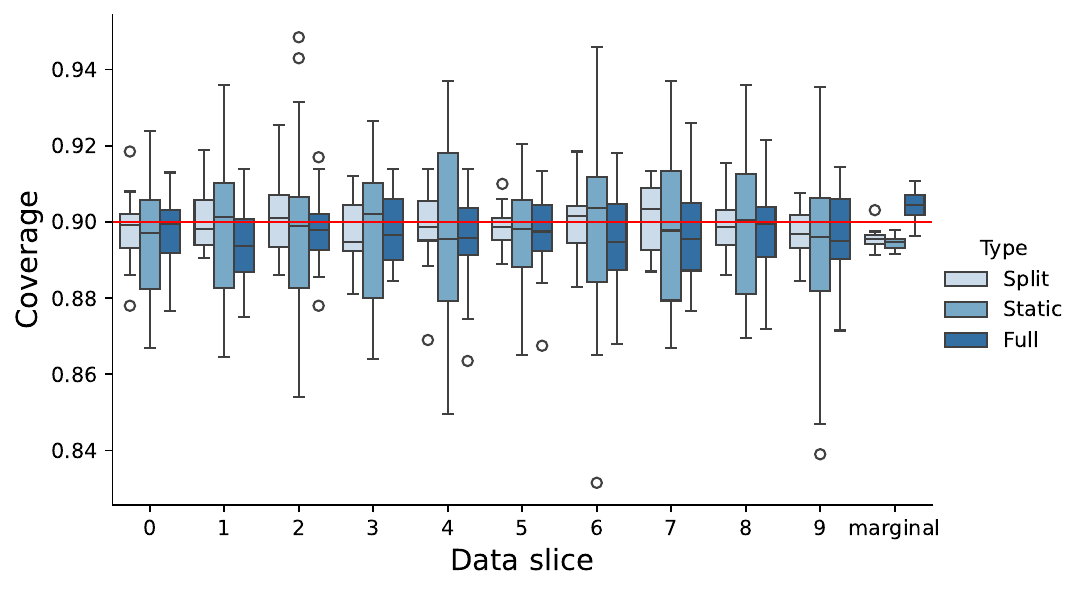}
    \caption{\label{fig:random-directions} Coverage of full conformal, split
      conformal, and static split conformal methods on random 20\%
      ``slices'' of CIFAR-100 data.}
  \end{center}
\end{figure}

\section{Conclusion and discussion}

The results in our experiments, though they are relatively small scale,
appear to be consistent with other experiments we do not present for
brevity.
In brief: split conformal methods with confidence sets using adaptive
thresholds of the form $\what{C}(x) = \{y \mid \scoreval(x, y) \le
\what{h}(x)\}$ can indeed provide stronger coverage than non-adaptive
thresholds.
Moreover, they are \emph{much} faster to compute with than full conformal
methods---in the experiment in Figure~\ref{fig:random-directions},
the split conformal method was roughly 8000$\times$ faster than
the full conformal method.
Additionally, they enjoy strong sample-conditional stability
as well as minimax optimality.

In spite of this, when the adaptive threshold
$\what{h}(x)$ comes from a class of functions that is high-dimensional
relative to the size of the data available for calibration,
these methods can undercover, as they exhibit downard bias in their
coverage.
This bias is easy to correct for a static threshold
$\what{C}(x) = \{y \mid \scoreval(x, y) \le \what{\tau}\}$ by
simply using a slightly larger quantile, however, it is
unclear how to address it in adaptive scenarios.
This makes obtaining a data-adaptive way to compute the coverage bias,
either marginally or along various splits of the validation data,
substantially interesting and a natural direction for future work.
Identifying such an offline correction without relying, as our heuristic
development in Section~\ref{sec:synthetic}, could make these procedures
substantially more practical, by both enjoying the test-time speed of split
conformal methods and the coverage accuracy of full-conformal procedures.

\appendix

\section{Technical proofs}

\subsection{Proof of Lemma~\ref{lemma:bound-deviation-ltwo}}
\label{sec:proof-bound-deviation-ltwo}

When $\ball_2$ is the $\ell_2$-ball,
\begin{equation*}
  \E\left[\sup_{h \in \mc{H}, u \in \ball_2}
    \<u, Z_n(h) - \E[Z_n(h)]\>\right]
  = \E\left[\sup_{h \in \mc{H}} \ltwo{Z_n(h) - \E[Z_n(h)]}
    \right].
\end{equation*}
Performing a typical symmetrization argument, we let $P_n^0 = \frac{1}{n}
\sum_{i = 1}^n \varepsilon_i \pointmass_{X_i, \scorerv_i}$ be the
symmetrized empirical measure, where $\varepsilon_i \simiid \uniform\{\pm
1\}$ are i.i.d.\ Rademacher variables, and define the symmetrized process
$Z_n^0(h) = \frac{1}{n} \sum_{i = 1}^n \varepsilon_i \phi(X_i)
\indic{\scorerv_i > h(X_i)}$.
Then for the (random) set of vectors
$\mc{V}_n = \{(\indic{\scorerv_1 > h(X_1)}, \ldots, \indic{\scorerv_n >
  h(X_n)})\}_{h \in \mc{H}} \subset \{0, 1\}^n$
\begin{align*}
  \E\left[\sup_{h \in \mc{H}} \ltwo{Z_n(h) - \E[Z_n(h)]} \right]
  & \le 2 \E\left[\sup_{h \in \mc{H}} \ltwo{Z_n^0(h)}\right]
  \le 2 \E\left[\max_{v \in \mc{V}_n}
    \ltwobigg{\frac{1}{n} \sum_{i = 1}^n \varepsilon_i \phi(X_i) v_i}\right].
\end{align*}
Now, we recognize that because the vectors $\phi$ lie in a Hilbert
space,
we enjoy certain dimension free concentration guarantees.
In particular, we have for any fixed $v \in \{0, 1\}^n$ that
\begin{equation*}
  \P\left(\ltwobigg{\sum_{i = 1}^n \varepsilon_i \phi(X_i) v_i} \ge t
  \mid X_1^n \right)
  \le 2 \exp\left(-\frac{t^2}{2 \Phi_n^2}\right),
\end{equation*}
where $\Phi_n^2 \defeq \sum_{i = 1}^n \ltwo{\phi(X_i)}^2$ by
\citet[Theorem 3.5]{Pinelis94} (see also~\cite[Corollary
  10]{HowardRaMcSe20}).
In particular, using that for $U$ a nonnegative random variable
$\E[U] = \int_0^\infty \P(U \ge u) du$,
we obtain
\begin{align*}
  \E\left[\max_{v \in \mc{V}_n} \ltwobigg{\sum_{i = 1}^n
      \varepsilon_i \phi(X_i) v_i}
    \mid X_1^n \right]
  & \le \int_0^\infty \P
  \left(\max_{v \in \mc{V}_n}
  \ltwobigg{\sum_{i = 1}^n
    \varepsilon_i \phi(X_i) v_i} \ge t \mid X_1^n \right) dt \\
  & \le t_0
  + 2 \card(\mc{V}_n) \int_{t_0}^\infty
  \exp\left(-\frac{t^2}{2 \Phi_n^2}\right) dt.
\end{align*}
Recognizing the Gaussian tail bound
that
\begin{equation*}
  \int_c^\infty e^{-\frac{t^2}{2 \sigma^2}} dt
  = \sqrt{2 \pi \sigma^2}
  \int_{c/\sigma}^\infty \frac{1}{\sqrt{2\pi}} e^{-z^2 / 2} dz
  \le \sqrt{2 \pi \sigma^2}
  \min\left\{\frac{1}{\sqrt{2\pi}}
  \frac{\sigma}{c}, 1 \right\}
  \exp\left(-\frac{c^2}{2\sigma^2}\right)
\end{equation*}
by Mills' ratio, we see that for any $t_0 \ge 0$,
\begin{align*}
  \E\left[\max_{v \in \mc{V}_n} \ltwobigg{\sum_{i = 1}^n
      \varepsilon_i \phi(X_i) v_i}
    \mid X_1^n \right]
  & \le t_0 + 2 \card(\mc{V}_n)
  \cdot \frac{\Phi_n^2}{t_0}
  \exp\left(-\frac{t_0^2}{2 \Phi_n^2}\right).
\end{align*}

Finally, recognize that $\mc{V}_n$ has cardinality at most $(\frac{e
  n}{k})^{k}$ by the Sauer-Shelah lemma because $\mc{H}$ has VC-dimension
$k$.
Consequently, we may take $t_0^2 = 2 \log \card(\mc{V}_n) \Phi_n^2$ to
obtain the bound
\begin{align*}
  \E\left[\max_{v \in \mc{V}_n} \ltwobigg{\sum_{i = 1}^n
      \varepsilon_i \phi(X_i) v_i}
    \mid X_1^n \right]
  & \le \sqrt{2 \log \card(\mc{V}_n)} \Phi_n
  + \frac{\Phi_n}{\sqrt{2 \log \card(\mc{V}_n)}}
  \le 2 \sqrt{k \log \frac{n e}{k}} \cdot \Phi_n.
\end{align*}
Take expectations over $X_1^n$.

\subsection{Proof of Lemma~\ref{lemma:peel-your-potatoes}}
\label{sec:proof-peel-your-potatoes}

For $r \ge 0$, define the localized class
\begin{equation*}
  \mc{F}_r \defeq
  \left\{f \in \mc{F}
  \mid Pf^2 = \E[\<v, \phi(X)\>^2 \indic{\scorerv
      > h(X)}] \le r^2 \right\}.
\end{equation*}
Note that $\mc{F}_r$ always includes the 0 function and is star-convex,
because if $f \in \mc{F}_r$, then $\lambda f \in \mc{F}_r$ for $\lambda \in
[0, 1]$.
Recalling inequality~\eqref{eqn:rademacher-vc-control},
the second term dominates the first, and so
\begin{equation*}
  \E\left[\rademacher_n(\mc{F}_r)\right]
  \le c r \sqrt{\frac{d}{n} \log \frac{n}{d}}
  ~~ \mbox{if}~~
  r^2 \ge \radphi^2 \frac{d}{n} \log \frac{n}{d}.
\end{equation*}

Define the random variable $Z_n(r) \defeq
\sup_{f \in \mc{F}_r} (P_n - P) f = \sup_{f \in \mc{F}_r} |(P_n - P) f|$, the
equality following by symmetry of $\mc{F}_r$. Then
Talagrand's concentration inequality (Lemma~\ref{lemma:talagrand}) implies
that
\begin{equation*}
  \P\left(Z_n(r) \ge \E[Z_n(r)]
  + \sqrt{2 (r^2 + 2 \radphi \E[Z_n(r)]) t}
  + \frac{\radphi}{3} t \right) \le e^{-nt}
\end{equation*}
for all $t \ge 0$.  Applying a standard symmetrization argument and
inequality~\eqref{eqn:rademacher-vc-control},
we thus obtain that for $r^2 \ge \radphi^2 \frac{d}{n} \log \frac{n}{d}$,
with probability at least $1 - e^{-t}$,
\begin{equation*}
  Z_n(r) \le c r \sqrt{\frac{d \log n}{n}}
  + c \sqrt{\frac{r^2}{n^2} + \frac{\radphi^2}{n}
    r \sqrt{\frac{d \log n}{n}}} \sqrt{t}
  + \frac{\radphi t}{3n}.
\end{equation*}
As the last step, we apply a peeling
argument~\cite{Wainwright19, vandeGeer00}: consider the intervals
\begin{equation*}
  E_k \defeq \openleft{2^{k-1}\frac{\radphi^2 d \log n}{n}}{
    2^k \frac{\radphi^2 d \log n}{n}}
  ~~
  k = 1, 2, \ldots, K_n \defeq \ceil{\log_2 \frac{n}{d \log n}}.
\end{equation*}
Let $\mc{F}^k = \{f \in \mc{F} \mid  Pf^2 \in E_k\}$,
where $\mc{F}^0 = \{f \in \mc{F} \mid Pf^2 \le \frac{d \log n}{n}\}$. Then
evidently $\cup_{k = 0}^{K_n} \mc{F}^k = \mc{F}$, and letting
$r_k^2 = 2^k \radphi^2 \frac{d \log n}{n}$, we have
$\mc{F}_{r_k} \subset \mc{F}^k$. So by a union bound,
with probability at least $1 - (K_n + 1)e^{-t}$,
\begin{subequations}
  \label{eqn:peeling-inequalities}
  \begin{equation}
    Z_n(r_k) \le c r_k \sqrt{\frac{d \log n}{n}} + c
    \sqrt{\frac{r_k^2}{n^2} + \frac{\radphi}{n}
      \sqrt{\frac{r_k^2 d \log n}{n}}} \sqrt{t} + \frac{\radphi}{3n} t
    ~~ \mbox{for~} k = 1, \ldots, K_n
  \end{equation}
  and
  \begin{equation}
    Z_n(r_0) \le c \frac{d \log n}{n}
    + \sqrt{\frac{r_0^2}{n^2} + \frac{\radphi}{n}
      \frac{d \log n}{n}} \sqrt{t} + \frac{\radphi t}{3n}.
  \end{equation}
\end{subequations}

Recall the definition $v^2(f) \defeq Pf^2 = \var(f) + (Pf)^2$. Then
$f \in \mc{F}^k$ implies $\half r_k \le v(f) \le r_k$, so that
on the event that all the inequalities~\eqref{eqn:peeling-inequalities}
hold, then simultaneously for all $f$ satisfying
$v^2(f) \ge \frac{d \log n}{n}$, then
\begin{equation*}
  \left|(P_n - P) f\right|
  \le c v(f) \sqrt{\frac{d \log n}{n}}
  + c \sqrt{\frac{v^2(f)}{n^2}
    + \frac{\radphi}{n} \sqrt{v^2(f) \frac{d \log n}{n}}}
  \sqrt{t} + \frac{\radphi}{3n} t,
\end{equation*}
while for all $f$ with $v^2(f) \le \frac{d \log n}{n}$ we have
\begin{equation*}
  |(P_n - P) f|
  \le c \frac{d \log n}{n}
  + c \sqrt{\frac{d \log n}{n^3}
    + \frac{\radphi}{n} \frac{d \log n}{n}} \sqrt{t} + \frac{\radphi}{3n} t.
\end{equation*}
(To obtain the absolute bounds, we have used that $f \in \mc{F}$ implies
$-f \in \mc{F}$ and each set $\mc{F}_r$ and $\mc{F}^k$ is symmetric.)
Finally, we note that
\begin{align*}
  \sqrt{\frac{v^2(f)}{n^2}
    + \frac{\radphi}{n}
    \sqrt{v^2(f) \frac{d \log n}{n}}}
  & \le \sqrt{\frac{v^2(f)}{n^2}
    + \frac{\radphi^2 d \log n}{2 n^2}
    + \frac{v^2(f)}{2 n}} \\
  & \le \frac{\radphi \sqrt{d \log n}}{\sqrt{2} n}
  + \frac{v(f)}{\sqrt{n}},
\end{align*}
which implies the first statement of Lemma~\ref{lemma:peel-your-potatoes}.

The second statement follows via the same argument.

\bibliography{bib}
\bibliographystyle{abbrvnat}

\end{document}